\pgfplotsset{compat=1.15}
\newtheorem{Theorem}{Theorem}
\newtheorem{Definition}{Definition}
\newtheorem{Remark}{Remark}
\newtheorem{Corollary}{Corollary}
\newcommand{\irene}[1]{{{ \color{black} #1}}}
\newcommand{\refuno}[1]{\textcolor{red}{#1}}
\journal{Journal of Computational and Applied Mathematics }
\begin{document}

\begin{frontmatter}



\title{Well-balanced POD-based reduced-order models for finite volume approximation of hyperbolic balance laws}

\author{I. Gómez-Bueno\corref{cor1}\fnref{label2}}
\ead{igomezbueno@uma.es}
\cortext[cor1]{Corresponding author.}
\address[label2]{Dpto. Matemática Aplicada. Universidad de Málaga, 29010, Málaga, Spain}

\author{E.D. Fernández-Nieto\fnref{label3}}

\author{S. Rubino\fnref{label4}}


\address[label3]{Dpto. Matemática Aplicada I $\&$ IMUS, Universidad de Sevilla, 41012, Sevilla, Spain}

\address[label4]{Dpto. EDAN $\&$ IMUS, Universidad de Sevilla, 41012, Sevilla, Spain}

\begin{abstract}
This paper introduces a reduced-order modeling approach based on finite volume methods for hyperbolic systems, combining Proper Orthogonal Decomposition (POD) with the Discrete Empirical Interpolation Method (DEIM) and Proper Interval Decomposition (PID). Applied to systems such as the transport equation with source term, non-homogeneous Burgers equation, and shallow water equations with non-flat bathymetry and Manning friction, this method achieves significant improvements in computational efficiency and accuracy compared to previous time-averaging techniques. A theoretical result justifying the use of well-balanced Full-Order Models (FOMs) is presented. Numerical experiments validate the approach, demonstrating its accuracy and efficiency. Furthermore, the question of prediction of solutions for systems that depend on some physical parameters is also addressed, and a sensitivity analysis on POD parameters confirms the model's robustness and efficiency in this case.
\end{abstract}


\begin{highlights}
\item Efficient POD-ROM combines DEIM, PID, and FV methods for hyperbolic systems.
\item Proves ROMs inherit well-balanced property from well-balanced FOMs.
\item DEIM strategy enhances efficiency, accuracy, and reduces spurious oscillations.
\item Predictive ROMs validated for parameterized systems via sensitivity analysis.
\item Applied to transport and Burgers equations with source terms and shallow water system with bottom and Manning friction.
\end{highlights}

\begin{keyword}
finite volume method \sep proper orthogonal decomposition \sep reduced order modeling \sep hyperbolic balance laws \sep well-balanced property


\MSC 76M12 \sep 35L60 \sep 65M08

\end{keyword}

\end{frontmatter}



\section{Introduction}

 The numerical resolution of hyperbolic equations for realistic applications could require high computational costs, due to the use of very fine grids to properly capture complex shock phenomena or discontinuities, and also integration over long periods in time. Reduced-Order Models (ROMs) applied to numerical design of complex physics problems are a proper mathematical tool that is wide-spreading in the scientific community in the recent years in order to save computational time without so much loss of accuracy with respect to classical methods such as Finite Difference (FD), Finite Element (FE) or Finite Volume (FV) methods \cite{hesthaven2016certified}.

Among the most popular ROMs approaches, Proper Orthogonal Decomposition (POD) strategy provides optimal (from the energetic point of view) basis or modes to represent the dynamics from a given database (snapshots) obtained by a Full-Order Model (FOM). Onto these reduced bases, a Galerkin projection of the governing equations can be employed to obtain a low-order dynamical system for the basis coefficients. The resulting low-order model is named standard POD-(Galerkin) ROM, which thus consists in the projection of high-fidelity (full-order) representations of physical problems onto low-dimensional spaces of solutions, with a dramatically reduced dimension. These low-dimensional spaces are capable of capturing the dominant characteristics of the solution, their main advantage being that the computations in the low-dimensional space can be done at a reduced computational cost \cite{lumley1967structure,sirovich1987turbulence}. This has led researchers to apply POD-ROMs to a variety of physical and engineering problems, including Computational Fluid Dynamics (CFD) problems in order to model advection-diffusion equations, see e.g. \cite{azaiez2021cure,giere2015supg}, the Navier–Stokes Equations (NSE), see e.g. \cite{baiges2013explicit,ballarin2015,galletti2004low,novo2021error,rebollo2022error,wang2012proper}, and also hyperbolic equations, such as Shallow Water (SW) equations \cite{ahmed2020sampling, dehghan2017use, cstefuanescu2013pod, zeng2022embedded,zokagoa2018pod}. 

Although POD-ROMs can be very computationally efficient and relatively accurate in some flow configurations, they also present several drawbacks, when applied for instance to strongly nonlinear systems. In this direction, several techniques have been presented for hyperbolic systems, such as the dynamic mode decomposition \cite{ahmed2020sampling, schmid2010dynamic}, the Discrete Empirical Interpolation Method (DEIM) \cite{chaturantabut2009discrete,crisovan2019model,fu2018pod}, the tensorial POD \cite{cstefuanescu2014comparison}, and also non-intrusive hyper-reduction techniques \cite{ahmed2020sampling}. Furthermore, to address this issue, a recent paper \cite{solan2023development} uses the time averaging approach in combination with the Proper Interval Decomposition (PID, \cite{ahmed2018stabilized, ijzerman2000signal,zokagoa2018pod}) applied to SW using augmented Riemann solvers. \refuno{Moreover, it should be emphasized that, in the context of hyperbolic problems, reduced order modeling techniques suffer from fundamental limitations when used to predict solutions beyond the training time window. This issue arises due to the highly nonlinear and propagative nature of the solutions to systems of balance laws, which may develop discontinuities or rapidly evolving features whose dynamics are not adequately represented by the dominant modes extracted from limited-time snapshots. Consequently, the reliability of reduced models in extrapolative regimes may be affected, and care should be taken when applying them outside the training interval.
}

In this work, we focus on the use of POD-based ROMs for generic hyperbolic systems with source terms. In particular, we show how the use of DEIM technique to deal with nonlinearities combined with PID outperforms the time averaging approach in combination with PID proposed in \cite{solan2023development}, especially in terms of physical accuracy. 

These PDE systems have been extensively used to model waves caused by small disturbances in steady solutions. A notable example is tsunami waves in the ocean. A numerical method that can solve all steady states of the PDE exactly, or with improved accuracy, or at least a significant subset of them, is referred to as \textit{exactly Well-Balanced} (EWB). Additionally, the method is classified as \textit{fully exactly well-balanced} if it exactly preserves all the stationary solutions of the hyperbolic system (see \cite{gomez2021collocation}). The design of well-balanced schemes is a highly active area of research, and many studies focus on this topic, specially for the case of finite volume methods. For example, see \cite{LR96} for the Burgers equation; \cite{abgrall2023new, audusse2004fast, Kurganov2016, ciallella2022arbitrary, michel2017well, noelle2006well} for the shallow water equations; and \cite{abgrall2022hyperbolic,berthon2022very,Castro2020High,   dumbser2024well,klingenberg2019arbitrary} for other hyperbolic systems, among others.

 In this paper we present an original theoretical result of consistency at ROM level, to justify the choice of well-balanced methods to build the FOM, which is independent of the considered hyperbolic balance law. Three examples of well-balanced POD-based methods are presented, for the transport equation with a source term, non-homogeneous Burgers equation and for the SW equations with topography and Manning friction. Finally, following other works focused on reduced order modelling for parametrized problems (see \cite{crisovan2019model}), the question of prediction of solutions for physical parameter-dependent systems is also addressed in this paper, as well a sensitivity analysis to specific parameters of the proposed POD-ROM, such as the number of POD modes and time windows in terms of the errors between FOM and ROM solutions and the CPU times they require.

The rest of the paper is organized as follows: first, we introduce in Section \ref{sec:FOM} the finite volume full-order models for general one-dimensional hyperbolic PDE systems, detailing the numerical schemes and discussing the construction of exactly well-balanced schemes. In Subsection \ref{subsec:FOMs}, we specify the FOMs for three examples of balance laws: the linear transport equation with source term, the Burgers equation with a nonlinear source term, and the shallow water equations with and without Manning friction. In all cases, we use PVM-0 type numerical fluxes \cite{castro2012class}, and for the shallow water case, we additionally apply the HLL scheme \cite{harten1983upstream}.  

Section \ref{sec:POD-ROMs} outlines the strategy for deriving the POD-based ROMs for the FOMs presented in this article. Special attention is given to the treatment of nonlinearities, where we employ the PID method to adapt the basis functions over time by partitioning the total time interval into subintervals, known as time windows. This method is combined with DEIM, achieving improved results compared to other approaches, such as time-averaging, especially in handling discontinuities. In Section \ref{sec:ROM_examples}, we illustrate the above procedure for the three balance law examples introduced in Subsection \ref{subsec:FOMs}. Section \ref{sec:WB_theorem} introduces a theorem proving that if the initial FOM is exactly well-balanced, the associated ROM will also satisfy this property. In Section \ref{sec:prediction}, we briefly address the problem of solution prediction for systems depending on physical parameters, such as the Manning friction coefficient in the shallow water system. Section \ref{sec:numericalexperiments} presents several numerical tests to support the theoretical findings of the study. Finally, concluding remarks are provided in Section \ref{sec:conclusiones}.

\section{Hyperbolic PDE systems: full-order models}\label{sec:FOM}

Let us consider, for ease of presentation, a general one-dimensional hyperbolic system of balance laws of the form
\begin{equation}\label{PDE}
    W_t+F(W)_x=S(W)H_x+R(W), \quad x\in I, \, t>0,
\end{equation}
where $I \subset \mathbb{R}$; the unknown $W(x; t) =
(w_1(x, t), \cdots, w_N(x, t))^T$ takes values in $\Omega$, being $\Omega$ an open convex set of $\mathbb{R}^N$, called set
of states; $F$ is a regular function from $\Omega$ to $\mathbb{R}^N$ called 
flux function; $S$ and $R$ are functions from $\Omega$
 to $\mathbb{R}^N$ and $H(x)$ is a known bounded
function from $I$ to $\mathbb{R}$.

The technical extension to higher dimension (2D/3D) could be performed by following the guidelines given in this work.

 First-order FV numerical methods will be used to discretize system \eqref{PDE}. The domain \( I \) is divided into \( N_x \) computational cells \( I_i = [x_{i-1/2}, x_{i+1/2}] \). For simplicity, a uniform spatial step size \( \Delta x \) is assumed. The center of each cell \( I_i \) is denoted by \( x_i \), where \( x_i = (i - 1/2) \Delta x \), and the inter-cell boundary is located at \( x_{i+1/2} = i \Delta x \). Similarly, \( \Delta t \) denotes the time step size, where \( t^n = (n - 1) \Delta t \) for \( n = 1, 2, \dots \). The cell-averaged piecewise approximation of the solution \( W(x,t) \) in cell \( I_i \) at time \( t^n \) is denoted by \( W_i^n \):
\begin{equation}\label{discrete_i.c.}
W_i^n \approx \frac{1}{\Delta	x}\int_{x_{i-1/2}}^{x_{i+1/2}}W(x,t^n)\,dx,
\end{equation}
and its $j-$th component is denoted by $w_{j,i}^n, \, j=1,\cdots,N$. The mid-point rule is considered to approximate the integrals. \irene{A first-order FV full-order model can be written in terms of a general operator $\mathcal{L}$ as follows:
\begin{equation}\label{met_num_FOM_operador}
	W_i^{n+1}=W_i^n-\frac{\Delta t}{\Delta x} \mathcal{L}\left(W_{i-1}^n,W_i^n,W_{i+1}^n\right).
\end{equation}
}


Moreover, the systems of balance laws of the form \eqref{PDE} admit non-trivial stationary solutions satisfying
\begin{equation}\label{ODE_ss}
	F(W)_x=S(W)H_x+R(W).
\end{equation}

In this paper we consider numerical methods that are able to preserve such stationary solutions in the following sense:

\begin{Definition}\label{def:ewb}
A numerical method  is said to be Exactly Well-Balanced (EWB) for a stationary solution $W^*$ of \eqref{PDE} if the sequence of its cell-averages $\{ W^*_i \}$ (or the sequence of their approximations if a quadrature formula is used to compute them) is an equilibrium of the method.
\end{Definition}
\irene{
The development of well-balanced methods has been extensively investigated, with various strategies proposed in the literature. For example, within the path-conservative framework, a family of paths can be selected to satisfy the generalized Rankine–Hugoniot condition, thereby ensuring the well-balanced property for all stationary solutions—or at least for a significant subset of them (see \cite{castro2017well}). For instance, in the context of the shallow water equations, when the geometric source term is approximated using a family of straight segments, the resulting numerical method is exactly well-balanced for stationary solutions representing water at rest. This corresponds to cases where the velocity is zero, and the free surface elevation remains constant.

Another strategy for constructing well-balanced schemes involves employing well-balanced reconstruction operators, as described in \cite{Castro2020High, gomez2021collocation}. For example, a first-order, exactly well-balanced reconstruction operator can be defined as:
\begin{equation}
    P_i^n(x) = W_i^{*,n}(x) + W_i^n - W_i^{*,n}(x_i),
\end{equation}
where \( W_i^{*,n}(x) \) represents the stationary solution of \eqref{PDE} such that $W_i^{*,n}(x_i) = W_i^n$, since the mid-point rule is used to approximate the integrals.
}

\irene{
With regard to the numerical fluxes, we consider 
 $ \mathbb{F}(W_L, W_R)$, where $\mathbb{F}$ is a  numerical flux  consistent with $F$, i.e., $\mathbb{F}(W,W)=F(W).$ In this work we consider numerical fluxes written under the following viscous form:
\begin{equation} \label{eq:def:numflux}
    \mathbb{F}(W_L, W_R)=\dfrac{1}{2}\left(F(W_L)+F(W_R)\right) - \frac{1}{2}\mathcal{D}(W_L,W_R) \left( W_R - W_L \right),
\end{equation}
where $\mathcal{D}(W_L,W_R)$ is a viscosity matrix, defined in terms of Roe matrix $\mathcal{A}(W_L,W_R)$, verifying
$$
\mathcal{A}(W_L,W_R) (W_R-W_L)=F(W_R)-F(W_L).
$$
Following \cite{castro2012class}, Polynomial Viscosity Matrix (PVM) methods have been considered, where $\mathcal{D}(W_L,W_R)$ is defined by evaluating a polynomial of degree $l$, $p^l(W_L,W_R)(x)$, in a linearized Roe matrix $\mathcal{A}(W_L,W_R)$: 
\begin{equation}
    \mathcal{D}(W_L,W_R)= p^l(W_L,W_R)(\mathcal{A}(W_L,W_R)),
\end{equation}
with 
\begin{equation*}
    p^l(W_L,W_R)(x)=\sum_{k=0}^l \alpha^k(W_L,W_R) x^k.
\end{equation*}
For simplicity, given the sequence of cell-averages $\{W_i\}$, we will use the notation $p^l(W_i, W_{i+1})=p^l_{i+1/2}$, $\alpha^k(W_i,W_{i+1})=\alpha_{i+1/2}^k$ and $\lambda_{j}(W_i,W_{i+1})=\lambda_{j,i+1/2}$. In particular, let $\lambda_{1,i+1/2} < \dots < \lambda_{N,i+1/2}$ be the eigenvalues
of $\mathcal{A}(W_i, W_{i+1})$. For stability demands one needs
\begin{equation} \label{eq:estb:pvm}
     |\lambda_{j,i+1/2}| \leq p^l_{i+1/2}(\lambda_{j,i+1/2})  \leq \frac{\Delta x}{\Delta t}, \quad \forall j=1,\dots, N,
\end{equation}
with the CFL condition
$$
  \frac{\Delta t}{\Delta x}  \max_j |\lambda_{j}(W_i, W_{i+1})| \leq \gamma,
  $$
being $\gamma \leq 1$.

The simplest choice for a PVM method corresponds to the constant polynomial (PVM-0 method):
\begin{equation*}
p^0_{i+1/2}(x)=\alpha^0_{i+1/2},
\end{equation*}
 For a PVM-0 flux, the stability condition (\ref{eq:estb:pvm}) reads as
\begin{equation}
    \max_j |\lambda_{j,i+1/2}| \leq \alpha_{i+1/2}^0 \leq \frac{\Delta x}{\Delta t}.
\end{equation}
The Lax-Friedrichs flux corresponds $\alpha_{i+1/2}^0=\dfrac{\Delta x}{\Delta t}$, the modified Lax-Friedrichs represents the choice $\alpha_{i+1/2}^0= \gamma \dfrac{\Delta x}{\Delta t}$  and the Rusanov numerical flux corresponds to $\alpha_{i+1/2}^0=\max_j |\lambda_{j,i+1/2}|$.

We also consider the HLL numerical flux introduced in \cite{harten1983upstream}, which corresponds to the first degree polynomial
\begin{equation*}
p_{i+1/2}^1(x)=\alpha_{i+1/2}^0+\alpha_{i+1/2}^1 x,
\end{equation*}
where the coefficients $\alpha_{i+1/2}^0$ and $\alpha_{i+1/2}^1$ are defined as
\begin{equation}
    \alpha_{i+1/2}^0=\frac{S_{i+1/2}^R|S_{i+1/2}^L|-S_{i+1/2}^L|S_{i+1/2}^R|}{S_{i+1/2}^R-S_{i+1/2}^L}, \quad \alpha_{i+1/2}^1=\frac{|S_{i+1/2}^R|-|S_{i+1/2}^L|}{S_{i+1/2}^R-S_{i+1/2}^L}.
\end{equation}
Here, $S_{i+1/2}^L$ and $S_{i+1/2}^R$ are, respectively, approximations of the minimum and maximum wave speed propagation.
}

\subsection{Full-order models: examples}\label{subsec:FOMs}
\irene{Although this work introduces a strategy for developing ROMs for general 1D systems of balance laws, it is important to remember that the derivation of the ROM depends on the chosen numerical flux and the explicit form of the scheme is utilized to construct the reduced model.} Consequently, we consider three different 1D balance laws to evaluate the proposed strategy: the linear transport equation with source term, the Burgers equation with nonlinear source term and SW system with topography and Manning friction.

The numerical schemes used for each system will be explicitly detailed in this subsection in order to subsequently derive the corresponding ROMs.

\subsubsection{Transport equation with a linear source term}

Let us consider the 1D transport equation with a linear source term
\begin{equation}\label{transporte_ecuacion}
    w_t+cw_x=\beta w, \quad c,\beta \in \mathbb{R},
\end{equation}
which corresponds to the choices
\begin{equation}
    W=w, \quad F(w)=cw, \quad S(w)=\beta w,
\end{equation}
for \eqref{PDE}.
Notice that, given a cell-average $w_i^n$, the stationary solution $w_i^{*,n}$ verifying $w_i^{*,n}(x_i)=w_i^n$ is as follows:
\begin{equation}
    w_i^{*,n}(x)=w_i^n e^{(\beta/c) (x-x_i)}.
\end{equation}

\irene{Exactly well-balanced reconstruction operators will be applied as in \cite{Castro2020High} to preserve the stationary solutions. Therefore, if a PVM-0 method is considered, the first-order EWB method reads as follows:}
\begin{equation}
\begin{split}
      w_i^{n+1}&= w_i^n - \frac{\Delta t}{2 \Delta x} c \left( w_{i+1}^n e^{-  \frac{ \beta \Delta x}{2c}} +  w_{i}^n \left(e^{  \frac{ \beta \Delta x}{2c}}-  e^{  \frac{ -\beta \Delta x}{2c}} \right) -  w_{i-1}^n e^{ \frac{ \beta \Delta x}{2c}} \right) \\
       &+\frac{\Delta t }{2 \Delta x}  \left( \alpha_{i+1/2}^{0,n}(w_{i+1}^n e^{-  \frac{ \beta \Delta x}{2c}} -  w_{i}^n e^{  \frac{ \beta \Delta x}{2c}} ) -\alpha_{i-1/2}^{0,n}( w_{i}^n e^{  \frac{ -\beta \Delta x}{2c}}-   w_{i-1}^n e^{ \frac{ \beta \Delta x}{2c}}) \right) \\
      &+ \frac{\Delta t}{ \Delta x} c \left( w_{i}^n e^{  \frac{ \beta \Delta x}{2c}}  -  w_{i}^n e^{ \frac{ -\beta \Delta x}{2c}} \right).
\end{split}
\end{equation}

With the purpose of simplicity only one PVM-0 method has been considered, namely the modified Lax-Friedrichs method, corresponding to 

$$
\alpha_{i+1/2}^{0,n} = \gamma \frac{\Delta x}{\Delta t}.
$$

\subsubsection{Burgers equation with a nonlinear source term}

Let us consider the 1D Burgers equation with a nonlinear source term
\begin{equation}\label{burgers_ecuacion}
    w_t+\left(\frac{w^2}{2}\right)_x=\beta w^2, \quad \beta \in \mathbb{R},
\end{equation}
which corresponds to the choices
\begin{equation}
    W=w, \quad F(w)=\frac{w^2}{2}, \quad S(w)=\beta w^2,
\end{equation}
in \eqref{PDE}. \refuno{Notice that it can be regarded as a convective model taking into account a centrifugal force, defined by $\beta = -C \, \theta_x$, where $\theta$ represents the angle of the domain. Then, by considering a constant value of $\beta$ implies to consider a domain with a constant curvature. Let us remark that $\beta>0$ implies a negative curvature, then the flow accelerates by the centrifugal acceleration. }  In this case, given a cell-average $w_i^n$, the stationary solution verifying $ w_i^{*,n}(x_i)=w_i^n$ is:
\begin{equation}
    w_i^{*,n}(x)=w_i^n e^{\beta (x-x_i)}.
\end{equation}
\irene{Again, we have considered the approach in \cite{Castro2020High} to build exactly well-balanced methods, in which the property is transferred to the reconstruction operators. If a PVM-0 method is considered, the following first-order EWB is obtained:}
\begin{equation}
\begin{split}
      w_i^{n+1}&= w_i^n - \frac{\Delta t}{4 \Delta x}  \left( (w_{i+1}^n)^2 e^{-  \beta \Delta x} + ( w_{i}^n)^2 \left(e^{   \beta \Delta x}-  e^{   -\beta \Delta x} \right) -  (w_{i-1}^n)^2 e^{ \beta \Delta x} \right) \\
       &+\frac{\Delta t }{2 \Delta x}  \left( \alpha_{i+1/2}^{0,n}(w_{i+1}^n e^{-  \frac{ \beta \Delta x}{2}} -  w_{i}^n e^{  \frac{ \beta \Delta x}{2}} ) -\alpha_{i-1/2}^{0,n}( w_{i}^n e^{  \frac{ -\beta \Delta x}{2}}-   w_{i-1}^n e^{ \frac{ \beta \Delta x}{2}}) \right) \\
      &+ \frac{\Delta t}{ 2 \Delta x} (w_{i}^n)^2 \left(  e^{   \beta \Delta x}  -   e^{  -\beta \Delta x}\right).
\end{split}
\end{equation}
As in previous case, we consider the modified Lax-Friedrichs method.

\subsubsection{Shallow water system with non-flat bathymetry and Manning friction}
Let us consider the 1D SW system with non-flat bathymetry and Manning friction. \refuno{The one-dimensional shallow water equations (SWE), also known as the Saint-Venant equations, are a system of hyperbolic partial differential equations that describe free-surface flows in rivers and open channels. Originally derived by Barré de Saint-Venant in the 19th century (\cite{saintvenant1871}), these equations express the fundamental principles of mass conservation and momentum conservation for an incompressible, inviscid fluid under the hydrostatic pressure assumption.
The conservative form of the SWE with bottom slope and Manning friction source terms reads:}
\begin{equation}\label{swf_ecuacion}
    \begin{cases}
    h_t+q_x=0,\\
q_t + \left( \displaystyle \frac{q^2}{h} + \frac{1}{2} g h^2 \right)_x= -ghz_x - g\displaystyle \frac{n_b^2 q |q|}{h^{7/3}},\\
    \end{cases}
\end{equation}
which correspond to the choices $H(x)=-z(x)$ and
\begin{equation}
    \begin{split}
       & W =\begin{pmatrix}
h \\
q \\
\end{pmatrix} , \, F(W) = \begin{pmatrix}
q \\
\displaystyle \frac{q^2}{h}+\displaystyle \frac{g}{2}h^2\\
\end{pmatrix}, \, S(W) =\begin{pmatrix}
0\\
gh\\
\end{pmatrix}, \, R(W)  =\begin{pmatrix}
0 \\
- g\displaystyle \frac{n_b^2 q |q|}{h^{7/3}}
\\
\end{pmatrix}.
    \end{split}
\end{equation}
\refuno{where the first equation corresponds to mass conservation, and the second equation represents momentum conservation, with source terms for the bed slope and Manning's friction law.}
In \eqref{swf_ecuacion}, the unknowns $h$ and $q$ are the water depth of the water layer and discharge, respectively; the function $z(x)$ is the depth function; $g$ is the gravity acceleration and $n_b$ is the Manning friction coefficient. We denote by  $u=q/h$  the depth-averaged velocity and $c=\sqrt{gh}$ the celerity. 

The eigenvalues of the system are $\lambda^\pm=u\pm c$ and the Froude number defined by  
\begin{equation}
Fr(W)= \displaystyle \frac{|u|}{c},
\end{equation}
characterizes the flow regime: subcritical ($Fr<1$), critical ($Fr=1$) or supercritical ($Fr>1$). 

\irene{In this work, we are interested in the preservation of the steady states corresponding to the water at rest, which are of the form
\begin{equation}
    u=0, \quad \eta=constant,
\end{equation}
where $$\eta=h+z,$$ denotes the free surface. Following \cite{castro2017well}, if the path-conservative framework is considered and the family of segments is chosen to approximate the geometrical source term, the method is exactly well-balanced for water at rest. Choosing a PVM-0 method as numerical flux, the first-order EWB method for non-moving steady states reads as follows:}
\begin{equation}\label{FOM_swe_LF_h}
    \begin{split}
        h_i^{n+1}&= h_i^n - \frac{\Delta t}{2 \Delta x } \left( q_{i+1}^n - q_{i-1}^n\right)  + \frac{\Delta t}{2 \Delta x } \left( \alpha_{i+1/2}^{0,n}(\eta_{i+1}^n - \eta_{i}^n) - \alpha_{i-1/2}^{0,n}(\eta_{i}^n - \eta_{i-1}^n)\right),
        \\
    \end{split}
\end{equation}
\begin{equation}\label{FOM_swe_LF_q}
    \begin{split}
         q_i^{n+1}&= q_i^n - \frac{\Delta t}{2 \Delta x } \left( (u_ {i+1}^n)^2 h_{i+1}^n + \frac{1}{2}g (h_ {i+1}^n)^2 - (u_ {i-1}^n)^2 h_{i-1}^n -\frac{1}{2}g (h_ {i-1}^n)^2\right) \\
         & + \frac{\Delta t}{2 \Delta x } \left( \alpha_{i+1/2}^{0,n}(q_{i+1}^n - q_{i}^n) - \alpha_{i-1/2}^{0,n}(q_{i}^n - q_{i-1}^n)\right) \\
         &- \frac{g \Delta t}{4 \Delta x} \left( (h_{i+1}^n + h_i^n) (z_{i+1} - z_i) + (h_{i}^n + h_{i-1}^n) (z_{i} - z_{i-1})\right) - \Delta t g \frac{n_b^2 q_i^n |q_i^n|}{(h_i^n)^{7/3}}.
    \end{split}
\end{equation}

As in previous cases, we consider the case corresponding to the modified Lax-Friedrichs method, defined by 
$$
\alpha_{i+1/2}^{0,n} = \gamma \frac{\Delta x}{\Delta t}.
$$

On the other hand, if the HLL flux is selected, the following first-order EWB method for water-at-rest is considered:
\begin{equation}\label{FOM_swe_HLL_h}
    \begin{split}
        h_i^{n+1}&= h_i^n - \frac{\Delta t}{2 \Delta x } \left( q_{i+1}^n - q_{i-1}^n\right) + \frac{\Delta t}{2 \Delta x } \alpha_{i+1/2}^{0,n} \left(\eta_{i+1}^n - \eta_i^n  \right) \\
        & -  \frac{\Delta t}{2 \Delta x } \alpha_{i-1/2}^{0,n} \left(\eta_{i}^n - \eta_{i-1}^n  \right) + \frac{\Delta t}{2 \Delta x } \alpha_{i+1/2}^{1,n} \left(q_{i+1}^n - q_i^n  \right) \\
        &  -  \frac{\Delta t}{2 \Delta x } \alpha_{i-1/2}^{1,n} \left(q_{i}^n - q_{i-1}^n  \right),
    \end{split}
\end{equation}
\begin{equation}\label{FOM_swe_HLL_q}
    \begin{split}
         q_i^{n+1}&= q_i^n - \frac{\Delta t}{2 \Delta x } \left( (u_ {i+1}^n)^2 h_{i+1}^n + \frac{1}{2}g (h_ {i+1}^n)^2 - (u_ {i-1}^n)^2 h_{i-1}^n -\frac{1}{2}g (h_ {i-1}^n)^2\right) \\
         & + \frac{\Delta t}{2 \Delta x } \left(  \alpha_{i+1/2}^{1,n} \left(- (\widetilde{u}_{i+1/2}^n)^2 + g \widetilde{h}_{i+1/2}^n\right) \left(\eta_{i+1}^n-\eta_{i}^n\right) \right) \\
         & - \frac{\Delta t}{2 \Delta x } \left(  \alpha_{i-1/2}^{1,n} \left(- (\widetilde{u}_{i-1/2}^n)^2 + g \widetilde{h}_{i-1/2}^n\right) \left(\eta_{i}^n-\eta_{i-1}^n\right) \right) \\
          & + \frac{\Delta t}{2 \Delta x } \alpha_{i+1/2}^{0,n}  \left(q_{i+1}^n-q_{i}^n\right)   - \frac{\Delta t}{2 \Delta x } \alpha_{i-1/2}^{0,n} \left(q_{i}^n-q_{i-1}^n\right)  \\
           & + \frac{\Delta t}{ \Delta x } \alpha_{i-1/2}^{1,n} \widetilde{u}_{i+1/2}^n  \left(q_{i+1}^n-q_{i}^n\right)   - \frac{\Delta t}{ \Delta x } \alpha_{i-1/2}^{1,n} \widetilde{u}_{i-1/2}^n  \left(q_{i}^n-q_{i-1}^n\right)  \\
         &- \frac{g \Delta t}{2 \Delta x} \left( \tilde{h}_{i+1/2} (z_{i+1} - z_i) + \tilde{h}_{i-1/2} (z_{i} - z_{i-1})\right) - \Delta t g \frac{n_b^2 q_i^n |q_i^n|}{(h_i^n)^{7/3}},
    \end{split}
\end{equation}
where
\begin{equation} \label{htilde+utilde}
    \widetilde{h}_{i+1/2}^n= \frac{h_{i+1}^n+h_i^n}{2}, \quad \widetilde{u}_{i+1/2}^n = \frac{\sqrt{h_{i+1}^n} u_{i+1}^n + \sqrt{h_{1}^n} u_{1}^n }{\sqrt{h_{i+1}^n} + \sqrt{h_{1}^n} }
\end{equation}
(see \cite{glaister1988approximate}).

\section{POD-based reduced-order models for hyperbolic PDE systems}\label{sec:POD-ROMs}
Let us describe the general strategy to develop a POD-based reduced-order method for a given FOM previously introduced. Given the $j-$th component of $W$, $w_j$, we consider the following snapshot matrix $ M_{w_j} $ whose columns are the set of $N_T$ numerical solutions at each time step $t^n, \, n=1, \dots,N_T$, where $t^{N_T}=T_f$, being $T_f$ the final computational time:
\begin{equation}
 M_{w_j}  =  \begin{pmatrix}
 \vspace{2mm}
w_{j,1}^1 & w_{j,1}^2 & \cdots & w_{j,1}^{N_T}\\
w_{j,2}^1 & w_{j,2}^2 & \cdots & w_{j,2}^{N_T}\\
\vdots & \vdots & \ddots & \vdots\\
w_{j,N_x}^1 & w_{j,N_x}^2 & \cdots & w_{j,N_x}^{N_T}
\end{pmatrix},
\end{equation}
i.e., $w_{j,i}^n$ is the numerical approximation of $w_j$ at the $i-$th cell at time $t^n$.

First, we compute the functions of the POD basis by applying the singular value decomposition to the snapshot matrix for each variable $w_j$ to obtain
\begin{equation}\label{svd}
    M_{w_j}M_{w_j}^T= \Phi_j \Sigma_j \Sigma_j^T \Phi_j^T,
\end{equation}
where 
\begin{equation}
    \Sigma_j=diag(\sigma_1,\cdots, \sigma_r) \in \mathbb{R}^{N_x \times N_T}
\end{equation}
is a diagonal matrix whose entries are the singular values of $M_{w_j}$ and the matrix $\Phi_j=\left(\Phi_{j,1},\cdots, \Phi_{j,N_x}\right) \in \mathbb{R}^{N_x \times N_x}$ consists of the orthogonal eigenvectors of $M_{w_j}M_{w_j}^T$, with $\Phi_{j,k}=\left(\Phi_{j,1,k},\cdots, \Phi_{j,N_x,k}\right)^T $.
Now, let us apply the Galerkin decomposition to each component of $W$:
\begin{equation}\label{Galerkin-decomp}
    w_{j,i}^n \approx wr_{j,i}^n = \sum_{k=1}^M \hat{w}_{j,k}^n \Phi_{j,i,k},
\end{equation}
where $M \ll \min\{N_x,N_T\}$ is the number of POD modes.
In order to develop the ROM, we first introduce the Galerkin decomposition \eqref{Galerkin-decomp} into de FOM, multiply each equation by the $p-$th component of the corresponding basis functions $\Phi_{j,i,p}, \, p=1,\cdots,M$, and sum up over the cells to complete the projection.
Following the previous steps, the system to obtain the coefficients for the ROM reads
\begin{equation}
    \hat{w}_{j,p}^{n+1}=\hat{w}_{j,p}^{n}-\frac{\Delta t}{\Delta x} \left( \sum_{i=1}^{N_x}\mathcal{L}_j \left( Wr_{i-1}^n,Wr_{i}^n,Wr_{i+1}^n\right) \Phi_{j,i,p}\right),
\end{equation}
where $p=1,\cdots, M$, $\mathcal{L}_j$ is the $j-$th component of the operator $\mathcal{L}$, and $Wr_{i}^n$ is the vector whose components are $wr_{j,i}^n$.

The number of POD modes $M$ is chosen following \cite{quarteroni2015reduced}, which suggests a criterion to select the POD dimension $M \leq r$ as the smaller integer such that
\begin{equation}\label{criterio_eleccionModos}
    I(M)=\frac{\sum_{i=1}^M \sigma_i^2}{\sum_{i=1}^r \sigma_i^2} \geq 1 - \varepsilon_{\text{POD}}^2,
\end{equation}
where $\varepsilon_{\text{POD}}$ is a desired tolerance. 

\subsection{Dealing with nonlinearities}
Accurate solutions can often be obtained with a limited number of POD modes when dealing with smooth flow dynamics. Nevertheless, for highly nonlinear and irregular flows, traditional POD-based techniques may encounter limitations. This issue stems from the use of time-invariant basis functions, as discussed in \cite{gerbeau2014approximated} and \cite{zokagoa2018pod}. Various approaches have been introduced to tackle these issues.

In this work, we propose adopting the PID approach. Rather than employing a single set of basis functions throughout the entire simulation duration, this method divides the time domain into multiple subintervals, within which the standard POD method is applied independently. This approach allows the basis functions to vary over time.

The entire simulation interval $[0, T_f]$ is divided into $N_v$ non-overlapping subintervals, referred to as \textit{time windows}, organized as follows:
\begin{equation}
    [0, T_f]=[t^{N_1}, t^{N_2}] \cup [t^{N_3}, t^{N_4}] \cup \cdots \cup [t^{2N_v -1}, t^{2 N_v}],
\end{equation}
where $t^{N_1}=t^1=0$ and $t^{2 N_v}=t^{N_T}=T_f$. Each time window is denoted by 
$$T^v=[t^{N_{2v-1}},t^{N_{2v}}].$$

Thus, each snapshot matrix $ M_{w_j} $ is also divided into $N_v$ submatrices $ M_{w_j}^v \in \mathbb{R}^{{N_x}\times{N_{sv}}},  \, v=1, \cdots, N_v $, to which the POD procedure is performed. Here, $N_{sv}$ is the number of snapshots per time window. As a result, each time window will have its own POD basis composed of the functions $\Phi_{j,k}^v, \, v=1, \cdots, N_v$. \refuno{For simplicity, a uniform partition of the time domain into windows is considered in this work.}

At the interface of each time window, in order to switch from one time interval to the other without requiring an online computation of the full-order model and therefore reducing the computational cost,  we impose continuity in the projection, considering the following jump condition between two times windows $T^{v}$ and $T^{v+1}$:
\begin{equation}
    \hat{w}_{j,k}^{n,v+1}= \sum_{i=1}^{N_x} \sum_ {l=1}^M  \hat{w}_{j,k}^{n,v} \Phi_{j,i,l}^{v} \Phi_{j,i,k}^{v+1}.
\end{equation}

This approach allows to reduce the number of POD modes when dealing with nonlinear terms, so that, considering a much smaller number of POD modes and applying the PID method, a similar accuracy to the results obtained with a standard POD method can be obtained.

In addition, sometimes the POD method cannot be directly applied to some systems. Consider, for example, the Manning friction term in the SW equations \eqref{swf_ecuacion}:
\begin{equation}
     -g\frac{n_b^2 q |q|}{h^{7/3}}=-g\frac{n_b^2 u |u|}{h^{1/3}}.
\end{equation}
This is a highly nonlinear term, in which we find quotients and powers of functions, among others. When solving the ROM, a system to update the POD coefficients $\hat{w}_j^n$ for the conserved variables is obtained. However, sometimes other auxiliary variables are also involved, to which the Galerkin decomposition method is applied. This is the case, for instance, of the velocity $u$ in the SW equations. These additional snapshot matrices, built offline, allows to improve
the efficiency of the standard POD method.  The main problem is the fact that we do not have a direct expression that allows us to update the POD coefficients for these auxiliary variables. In each iteration of the ROM, once the POD coefficients of the conserved variables have been obtained, it would not be efficient to calculate these variables in the physical space in order to be able to carry out the projection in the corresponding POD basis functions and obtain the POD coefficients of the auxiliary variables. To tackle this problem, two different strategies are studied and compared in this paper: the time-averaging approach (see \cite{solan2023development}, \cite{zokagoa2018pod}) and DEIM (see \cite{quarteroni2015reduced}, \cite{hesthaven2016certified}). 

\begin{Remark}
\irene{Note that for the shallow water system, in addition to considering the modified Lax-Friedrichs flux, the HLL method is also used, which allows a higher accuracy to be obtained. However, considering the HLL flux poses a significant challenge in deriving the ROM due to the presence of numerous nonlinearities. This will allow us to verify that the strategies considered in the article to tackle this problem (time-averaging and DEIM) are adequate and compare them to see which one is more suitable.}
\end{Remark}

One the one hand, the time-averaging approach described in \cite{zokagoa2018pod}, \cite{solan2023development} is based on the PID method decribed above. When developing the ROM, instead of considering the reduced variable $wr_j$, the following approximation is considered at each time window:
\begin{equation}\label{timeaveraging_expression}
\overline{w_{j,i}^v}=\frac{1}{N_{sv}} \sum_{s=v_1}^{v_{N_{sv}}} w_{j,i}^s, \quad i=1, \cdots, N_x,
\end{equation}
where $t^{v_1}=t^{N_{2v-1}} $ and $t^{v_{N_{sv}}}=t^{N_{2v}}$ are the initial and final times of the $v-$th time window $T^v, \, v=1,\cdots, N_v$.

On the other hand, DEIM offers a way to update the POD coefficients for auxiliary variables, as discussed in \cite{fu2018pod}, \cite{cstefuanescu2013pod}, \cite{zokagoa2018pod}, among others. The main concept involves substituting the full-space orthogonal projection with a projection onto a reduced-dimensional subspace. This subspace is formed by a basis constructed through a greedy algorithm, which iteratively minimizes the interpolation error across the set of snapshots, evaluated in the maximum norm. As a result, this projection technique requires computing only a few specific components of the nonlinear terms, thereby significantly reducing computational costs. Let us consider, for instance, the velocity $u=q/h$ for the SW equations. This variable can be approximated in the ROM as follows:
\begin{equation}
     u_i^n \approx \sum_{k=1}^M \hat{u}_k^n \Phi_{3,i,k},
\end{equation}
where $\Phi_{3,i,k}$ is the $i-$th component of the basis function
\begin{equation}
\Phi_{3,k}=\left(\Phi_{3,1,k}, \cdots,\Phi_{3,N_x,k} \right)^T,
\end{equation}
which is the $k-$th column of the matrix $\Phi_{3}$ in \eqref{svd}. A pseudo-code to update the POD coefficients $\hat{u}$ at each time iteration is given in Algorithms \ref{deim_offline_u} and \ref{deim_online_u}, where the former is devoted to the offline process and the latter  to the online stage.
\begin{algorithm}[h]
\caption{DEIM: offline stage}
\label{deim_offline_u}
\KwIn{Snapshot matrix $\Phi_3 \in \mathbb{R}^{N_x \times M}$, number of DEIM points $M$}
\KwOut{Interpolation matrix $U$ and index set $\mathcal{I}$}

$i_1 \gets \arg\max_{i=1,\dots,N_x} |\Phi_{3,i,1}|$\;

$U \gets \Phi_{3,1}$\;

$\mathcal{I} \gets \{i_1\}$\;

\For{$m = 2$ \KwTo $M$}{
    $r \gets \Phi_{3,m} - U \left(U_{\mathcal{I}}^{-1} \Phi_{3,\mathcal{I},m}\right)$\;
    
    $i_m \gets \arg\max_{i=1,\dots,N_x} |r_i|$\;
    
    $U \gets [U \quad \Phi_{3,m}]$\;
    
    $\mathcal{I} \gets \mathcal{I} \cup \{i_m\}$\;
}
\Return{$U$, $\mathcal{I}$}
\end{algorithm}
In Algorithm \ref{deim_offline_u}, $U_\mathcal{I}$ and $ \Phi_{3,\mathcal{I},m}$ refers to the vectors formed by the elements at the positions of the indices $\mathcal{I}$ of $U$ and $\Phi_{3,m}$, respectively.

\begin{algorithm}[h]
\caption{DEIM: online stage}
\label{deim_online_u}
\KwIn{$U_{\mathcal{I}} \in \mathbb{R}^{M \times M}$, index set $\mathcal{I} = \{i_1, \dots, i_M\}$}
\KwOut{Approximated vector $\hat{u}$}

Evaluate the nonlinear function at interpolation points: 
$\hat{u}_{\mathcal{I}} \gets \left.\frac{q}{h}\right|_{\mathcal{I}}$\;

Solve the linear system: 
$U_{\mathcal{I}} \, \hat{u} = \hat{u}_{\mathcal{I}}$\;

\Return{$\hat{u}$}
\end{algorithm}

\section{Reduced-order models: some examples}\label{sec:ROM_examples}
Let us illustrate the above procedure by applying it to the transport equation with a linear source term, the Burgers equation with a nonlinear source term, and the SW system with topography and Manning friction introduced in Subsection \ref{subsec:FOMs}. Notice that, since the latter two systems have nonlinear terms, models using the PID strategy are considered. For the sake of simplicity, the superindices corresponding to the time windows have been removed.

\subsection{Transport equation with a linear source term}
The ROM is developed by applying the Galerkin decomposition to $w$, obtaining the following vector formulation:
\begin{equation}
    \hat{w}^{n+1}=\hat{w}^n- \frac{c \Delta t}{2 \Delta x} A \hat{w}^n + \frac{\gamma}{2} B \hat{w}^n + \frac{c \Delta t}{\Delta x} C \hat{w}^n, 
    \end{equation}
where the training matrices $A=(A_{pk})$, $B=(B_{pk})$, $C=(C_{pk})$ are given by
\begin{equation}
\begin{split}
    A_{pk}&=  a_1 \Phi_{1,p} +  \sum_{i=2}^{N_x -1 } \left( \Phi_{i+1,k}e^- + \Phi_{i,k}(e^+ -e^-)-\Phi_{i-1,k}e^+  \right) \Phi_{i,p}  +  a_{N_x} \Phi_{N_x,p},
    \end{split}        
\end{equation}
\begin{equation}
\begin{split}
     B_{pk}&=  b_1 \Phi_{1,p} +  \sum_{i=2}^{N_x -1 } \left( \Phi_{i+1,k}e^- - \Phi_{i,k}(e^+  + e^-)+\Phi_{i-1,k}e^+  \right) \Phi_{i,p}  +  b_{N_x} \Phi_{N_x,p},
    \end{split}        
\end{equation}
\begin{equation}
\begin{split}
\hspace{-6.5cm} C_{pk}&=   \sum_{i=1}^{N_x } \left(  \Phi_{i,k}(e^+  - e^-)\right) \Phi_{i,p},
    \end{split}        
\end{equation}
with $ e^-=e^{- \frac{\beta \Delta x}{2 c}}, \quad  e^+=e^{\frac{\beta \Delta x}{2 c}}$
\irene{and $a_1, b_1, a_{N_x},b_{N_x}$ the boundary terms. For instance, if free boundary conditions are considered, one has:
\begin{equation*}
    \begin{split}
        a_1&=\Phi_{2,k}e^- - \Phi_{1,k}e^-, \quad  a_{N_x}=  \Phi_{N_x,k}e^+ -\Phi_{N_x-1,k}e^+, \\
        b_1&=\Phi_{2,k}e^- - \Phi_{1,k} e^-, \quad b_{N_x}= - \Phi_{N_x,k}e^+ +\Phi_{N_x-1,k}e^+.
    \end{split}
\end{equation*}

}

\subsection{Burgers equation with a nonlinear source term}
The ROM is developed by applying the Galerkin decomposition to $w$, obtaining the following vector formulation:
\begin{equation}
    \hat{w}^{n+1}=\hat{w}^n- \frac{ \Delta t}{4 \Delta x} (\hat{w}^n)^T A \hat{w}^n + \frac{\gamma}{2} B \hat{w}^n + \frac{ \Delta t}{2 \Delta x} (\hat{w}^n)^T C \hat{w}^n, 
    \end{equation}
where the training 3D linear tensors $A=(A_{plk})$ and $C=(C_{plk})$ and the matrix  $B=(B_{pk})$ are given by 
\begin{equation}
\begin{split}
    A_{plk}&=  a_1 \Phi_{1,p} +  \sum_{i=2}^{N_x -1 } \left( \Phi_{i+1,k} \Phi_{i+1,l}E^- + \Phi_{i,k}\Phi_{i,l}(E^+ -E^-)-\Phi_{i-1,k}\Phi_{i-1,l}E^+  \right) \Phi_{i,p},
    \end{split}        
\end{equation}
\begin{equation}
\begin{split}
     B_{pk}&=  b_1 \Phi_{1,p} +  \sum_{i=2}^{N_x -1 } \left( \Phi_{i+1,k}e^- - \Phi_{i,k}(e^+  + e^-)+\Phi_{i-1,k}e^+  \right) \Phi_{i,p}  +  b_{N_x} \Phi_{N_x,p},
    \end{split}        
\end{equation}
\begin{equation}
\begin{split}
  \hspace{-6.5cm}    C_{plk}&=    \sum_{i=1}^{N_x  } \left(  \Phi_{i,k}\Phi_{i,l}(E^+  - E^-)\right) \Phi_{i,p},
    \end{split}        
\end{equation}
with $E^-=e^{- \beta \Delta x}, \,E^+=e^{\beta \Delta x}, \, e^-=e^{- \frac{\beta \Delta x}{2 }}, \, e^+=e^{\frac{\beta \Delta x}{2 }}$
and $a_1, b_1, a_{N_x},b_{N_x}$ the boundary terms.

\subsection{Shallow water system with non-flat bathymetry Manning friction}
Let us consider the SW with bottom and Manning friction \eqref{swf_ecuacion}. Using the general notation in \eqref{PDE}, $h=w_1$ and $q=w_2$. Moreover, the POD method  will be also applied to the velocity $u$, applying DEIM to update it when solving the system at each iteration. Each variable will be approximated as follows:
\begin{equation}
    h_i^n \approx \sum_{k=1}^M \hat{h}_k^n \Phi_{1,i,k}, \quad 
    q_i^n \approx \sum_{k=1}^M \hat{q}_k^n \Phi_{2,i,k}, \quad 
    u_i^n \approx \sum_{k=1}^M \hat{u}_k^n \Phi_{3,i,k}.
\end{equation}
As described in Section \ref{subsec:FOMs}, the modified Lax-Friedrichs and HLL numerical fluxes have been considered, leading to two different ROMs.

\subsubsection{Modified Lax-Friedrichs ROM}\label{sec:SW_LF_ROM}
If the modified Lax-Friedrichs numerical flux \eqref{FOM_swe_LF_h}-\eqref{FOM_swe_LF_q} is considered, the vector formulation of the ROM model reads as follows:
\begin{equation}\label{SWfric}
    \begin{split}
\hat{h}^{n+1}&=\hat{h}^n - \frac{\Delta t}{2 \Delta x} A \hat{q}^n + \frac{\gamma}{2} B \hat{h}^n + \frac{\gamma}{2} C,\\
\hat{q}^{n+1}&=\hat{q}^n - \frac{\Delta t}{2 \Delta x} (\hat{u}^n)^T D \hat{q}^n - g\frac{\Delta t}{4 \Delta x} (\hat{h}^n)^T E \hat{h}^n + \frac{\gamma}{2} F  \hat{q}^n - g\frac{\Delta t}{4 \Delta x}  G \hat{h}^n -  \Delta t \textit{Fric},
    \end{split}
\end{equation}
where the training matrices $A=(A_{pk})$, $B=(B_{pk})$,  $F=(F_{pk})$ and $G=(G_{pk})$, the vector $C=(C_{p})$, and the 3D linear tensors $D=(D_{plk})$ and $E=(E_{plk})$ are given by
\begin{equation}
\begin{split}
\hspace{-2.5cm}
    A_{pk}&=  a_1 \Phi_{1,1,p} +  \sum_{i=2}^{N_x -1 } \left( \Phi_{2,i+1,k} -\Phi_{2,i-1,k} \right) \Phi_{1,i,p} +  a_{N_x} \Phi_{1,N_x,p},
\end{split}        
\end{equation}
\begin{equation}
\begin{split}
     B_{pk}&=  b_1 \Phi_{1,1,p} +  \sum_{i=2}^{N_x -1 } \left( \Phi_{1,i+1,k} - 2 \Phi_{1,i,k} + \Phi_{1,i-1,k} \right) \Phi_{1,i,p} +  b_{N_x} \Phi_{1,N_x,p},
    \end{split}        
\end{equation}
\begin{equation}
\begin{split}
 \hspace{-2.8cm}     C_{p}&= c_1 \Phi_{1,1,p} +  \sum_{i=2}^{N_x -1 } \left( z_{i+1} - 2 z_{i} + z_{i-1} \right) \Phi_{1,i,p} +  c_{N_x} \Phi_{1,N_x,p},
    \end{split}        
\end{equation}
\begin{equation}
\begin{split}
      D_{plk}&=  d_1 \Phi_{2,1,p} +  \sum_{i=2}^{N_x -1 } \left( \Phi_{2,i+1,k} \Phi_{3,i+1,l} -\Phi_{2,i-1,k} \Phi_{3,i-1,l} \right) \Phi_{2,i,p} +  d_{N_x} \Phi_{2,N_x,p},
    \end{split}        
\end{equation}
\begin{equation}
\begin{split}
      E_{plk}&=  e_1 \Phi_{2,1,p} +  \sum_{i=2}^{N_x -1 } \left( \Phi_{1,i+1,k} \Phi_{1,i+1,l} -\Phi_{1,i-1,k} \Phi_{1,i-1,l} \right) \Phi_{2,i,p} +  e_{N_x} \Phi_{2,N_x,p},
    \end{split}        
\end{equation}
\begin{equation}
\begin{split}
       F_{pk}&=  f_1 \Phi_{2,1,p} +  \sum_{i=2}^{N_x -1 } \left( \Phi_{2,i+1,k} - 2 \Phi_{2,i,k} + \Phi_{2,i-1,k} \right) \Phi_{2,i,p} +  f_{N_x} \Phi_{2,N_x,p},
    \end{split}        
\end{equation}
\begin{equation}\label{SW_trainingmatrices_LF}
\begin{split}
 \hspace{0.4cm}       G_{pk}&=  g_1 \Phi_{2,1,p}+  g_{N_x} \Phi_{2,N_x,p}\\
        &+  \sum_{i=2}^{N_x -1 } \left( \left( \Phi_{1,i+1,k} + \Phi_{1,i,k} \right) \left( z_{i+1} + z_{i} \right) + \left( \Phi_{1,i,k} + \Phi_{1,i-1,k} \right) \left( z_{i} + z_{i-1} \right)
        \right) \Phi_{2,i,p},
    \end{split}        
\end{equation}
with \irene{$a_1, b_1, c_1, d_1, e_1, f_1, g_1,a_{N_x}, b_{N_x}, c_{N_x}, d_{N_x}, e_{N_x}, f_{N_x}, g_{N_x}$ the boundary terms.}

In order to build the friction term $\textit{Fric}$ in \eqref{SWfric}, the two strategies described in the previous section (time windows and DEIM) could be considered. This leads to different expressions of the friction term $\textit{Fric}$ in \eqref{SWfric}:
\begin{itemize}
    \item If time-averaging is applied to all the variables, one has:
    \begin{equation}\label{SW_fric_T-A}
        \textit{Fric}=g n_b^2 H,
    \end{equation}
    where $H=(H_p)$ reads as follows:
    \begin{equation}\label{SW_fric_T-A_Hp}
        H_p= \sum_{i=1}^{N_x} \frac{ |\overline{u_i^n}| \overline{u_i^n}}{ \left( \overline{h_i^n}\right)^{1/3}} \Phi_{2,i,p}.
    \end{equation}
    Here, $\overline{u_i^n}$ and $\overline{h_i^n}$ are the time-averaging approximations introduced in \eqref{timeaveraging_expression} applied to $u_i^n$ and $h_i^n$, respectively.
    \item  If time-averaging is only applied to the term
     \begin{equation*}
         \frac{|u|}{h^{4/3}},
     \end{equation*} 
     one has
\begin{equation}\label{SW_fric_T-A+DEIM}
        \textit{Fric}= g n_b^2 H \hat{q}^n,
    \end{equation}
    where $H=(H_{pk})$ reads as follows:
    \begin{equation}\label{SW_fric_T-A+DEIM_H}
        H_{pk}= \sum_{i=1}^{N_x} \frac{ |\overline{u_i^n}| }{ \left( \overline{h_i^n}\right)^{4/3}} \left( \Phi_{2,i,p} \right)^2.
    \end{equation}
    \item If the auxiliary variable
    \begin{equation}
        f=\frac{|q|}{h^{7/3}}
    \end{equation}
    is considered, and the Galerkin decomposition method is applied to it
    \begin{equation}\label{fROM_galerkindecomp}
        f_i^n \approx \sum_{k=1}^M \hat{f}_ k^n \Phi_{4,i,k}, 
    \end{equation}
    using DEIM to update the coefficients $ \hat{f}_ k^n$, then one has
     \begin{equation}\label{SW_fric_DEIM}
        \textit{Fric}= g n_b^2 (\hat{q}^n)^T H \hat{f}^n,
    \end{equation}
    where $H=(H_{plk})$ reads as follows:
    \begin{equation}\label{SW_fric_DEIM_H}
          H_{plk}= \sum_{i=1}^{N_x}   \Phi_{4,i,k} \Phi_{2,i,l} \Phi_{2,i,p} .
    \end{equation}
    
\end{itemize}

\subsubsection{HLL ROM}
Again, the time-averaging approach and DEIM will be applied if the HLL numerical flux \eqref{FOM_swe_HLL_h}-\eqref{FOM_swe_HLL_q} is considered. Here, $\overline{\widetilde{u}_{i+1/2}^n}$ and $\overline{\widetilde{h}_{i+1/2}^n}$ denote the time-averaging approximations of $\widetilde{u}_{i+1/2}^n$ and $\widetilde{h}_{i+1/2}^n$, respectively (see \eqref{htilde+utilde}), introduced in \eqref{timeaveraging_expression}. The vector formulation reads as follows:
\begin{equation}
    \begin{split}
\hat{h}^{n+1}&=\hat{h}^n - \frac{\Delta t}{2 \Delta x} A \hat{q}^n + \frac{\Delta t}{2 \Delta x} M_1 + \frac{\Delta t}{2 \Delta x} M_2 + \frac{\Delta t}{2 \Delta x} M_3,\\
\hat{q}^{n+1}&=\hat{q}^n - \frac{\Delta t}{2 \Delta x} (\hat{u}^n)^T D \hat{q}^n - g\frac{\Delta t}{4 \Delta x} (\hat{h}^n)^T E \hat{h}^n + \frac{\Delta t}{2 \Delta x} M_4 + \frac{\Delta t}{2 \Delta x} M_5 + \frac{\Delta t}{2 \Delta x} M_6 \\
&+\frac{\Delta t}{2 \Delta x} M_7 - g\frac{\Delta t}{4 \Delta x}  G \hat{h}^n -  \Delta t \textit{Fric},
    \end{split}
\end{equation}
where the training matrices $A$ and $G$ and the training tensors $D$ and $E$ are the same as in \eqref{SW_trainingmatrices_LF}, and the three previous possibilities \eqref{SW_fric_T-A}-\eqref{SW_fric_T-A+DEIM}-\eqref{SW_fric_DEIM} for the friction term $\textit{Fric}$ can be considered.
Two different approaches are considered to define the terms $M_m, \, m=1, \cdots, 6$, coming from the diffusion part of the HLL numerical flux.
On the one hand, if the time-averaging technique is applied to approximate the values $\alpha_{i+1/2}^{s,n}, \, s=1,2$, denoting these approximations by $\overline{\alpha_{i+1/2}^{s,n}}, \, s=1,2$, one has:
\begin{equation}
\begin{split}
& M_1= U_1 \hat{h}^n, \quad M_2= U_2 \hat{q}^n, \quad M_3= U_3, \\
 M_4= & U_4  \hat{h}^n, \quad M_5= U_5 \hat{q}^n, \quad M_6= U_6 \hat{q}^n,  \quad M_7= U_7,
\end{split}
\end{equation}
where the training matrices $U_1=((U_1)_{pk})$, $U_2=((U_2)_{pk})$, $U_4=((U_4)_{pk})$, $U_5=((U_5)_{pk})$, $U_6=((U_6)_{pk})$ and $U_7=((U_7)_{p})$, and the training vector $U_3=((U_3)_{p})$ are given by
\begin{equation}
\begin{split}
(U_1)_{pk}&=  (u_1)_1 \Phi_{1,1,p} + + (u_1)_{N_x} \Phi_{1,N_x,p} \\
&+ \sum_{i=2}^{N_x -1 } \left( \overline{\alpha_{i+1/2}^{0,n}} \left( \Phi_{1,i+1,k}- \Phi_{1,i,k} \right) - \overline{\alpha_{i-1/2}^{0,n}} \left( \Phi_{1,i,k}- \Phi_{1,i-1,k} \right) \right) \Phi_{1,i,p}, 
\end{split}
\end{equation}
\begin{equation}
\begin{split}
(U_2)_{pk}&=  (u_2)_{1} \Phi_{1,1,p}+ (u_2)_{N_x} \Phi_{1,N_x,p} \\
&+ \sum_{i=2}^{N_x -1 } \left( \overline{\alpha_{i+1/2}^{1,n}} \left( \Phi_{2,i+1,k}- \Phi_{2,i,k} \right) - \overline{\alpha_{i-1/2}^{1,n}} \left( \Phi_{2,i,k}- \Phi_{2,i-1,k} \right) \right) \Phi_{1,i,p},
\end{split}
\end{equation}
\begin{equation}
\begin{split}
(U_3)_{p}&=  (u_3)_1 \Phi_{1,1,p} + (u_3)_{N_x} \Phi_{1,N_x,p} \\
&+ \sum_{i=2}^{N_x -1 } \left( \overline{\alpha_{i+1/2}^{0,n}} \left( z_{i+1}- z_{i} \right) - \overline{\alpha_{i-1/2}^{0,n}} \left( z_{i}- z_{i-1} \right) \right) \Phi_{1,i,p},
\end{split}
\end{equation}
\begin{equation}
\begin{split}
(U_4)_{pk}&=  (u_4)_1 \Phi_{2,1,p}+ (u_4)_{N_x} \Phi_{2,N_x,p} \\
&+ \sum_{i=2}^{N_x -1 } \left( \overline{\alpha_{i+1/2}^{1,n}} \left( -(\overline{\widetilde{u}_{i+1/2}^n})^2 + g \overline{\widetilde{h}_{i+1/2}^n} \right) \left( \Phi_{1,i+1,k}- \Phi_{1,i,k} \right) \right) \Phi_{2,i,p}\\
&- \sum_{i=2}^{N_x -1 } \left( \overline{\alpha_{i-1/2}^{1,n}} \left( -(\overline{\widetilde{u}_{i-1/2}^n})^2 + g \overline{\widetilde{h}_{i-1/2}^n} \right) \left( \Phi_{1,i,k}- \Phi_{1,i-1,k} \right) \right) \Phi_{2,i,p},
\end{split}
\end{equation}
\begin{equation}
\begin{split}
(U_5)_{pk}&=  (u_5)_{1} \Phi_{2,1,p} + (u_5)_{N_x} \Phi_{2,N_x,p},\\
&+ \sum_{i=2}^{N_x -1 } \left( \overline{\alpha_{i+1/2}^{0,n}} \left( \Phi_{2,i+1,k}- \Phi_{2,i,k} \right) - \overline{\alpha_{i-1/2}^{0,n}} \left( \Phi_{2,i,k}- \Phi_{2,i-1,k} \right) \right) \Phi_{2,i,p},
\end{split}
\end{equation}

\begin{equation}
\begin{split}
(U_6)_{pk}&=  (u_6)_{1} \Phi_{2,1,p} + (u_6)_{N_x} \Phi_{2,N_x,p}\\
&+ \sum_{i=2}^{N_x -1 } 2 \left(  \overline{\alpha_{i+1/2}^{1,n}} \overline{\widetilde{u}_{i+1/2}^n} \left( \Phi_{2,i+1,k}- \Phi_{2,i,k} \right) -  \overline{\alpha_{i-1/2}^{1,n}} \overline{\widetilde{u}_{i-1/2}^n} \left( \Phi_{2,i,k}- \Phi_{2,i-1,k} \right) \right) \Phi_{2,i,p}, 
\end{split}
\end{equation}

\begin{equation}
\begin{split}
(U_7)_{p}&=  (u_7)_1 \Phi_{2,1,p}+ (u_7)_{N_x} \Phi_{2,N_x,p} \\
&+ \sum_{i=2}^{N_x -1 } \left( \overline{\alpha_{i+1/2}^{1,n}} \left( -(\overline{\widetilde{u}_{i+1/2}^n})^2 + g \overline{\widetilde{h}_{i+1/2}^n} \right) \left( z_{i+1}- z_{i} \right) \right) \Phi_{2,i,p}\\
&- \sum_{i=2}^{N_x -1 } \left( \overline{\alpha_{i-1/2}^{1,n}} \left( -(\overline{\widetilde{u}_{i-1/2}^n})^2 + g \overline{\widetilde{h}_{i-1/2}^n} \right) \left( z_{i}- z_{i-1} \right)\right) \Phi_{2,i,p},
\end{split}
\end{equation}
and $(u_m)_r, \, m=1, \cdots, 7, \, r=1, N_x,$ are the boundary terms. This choice will be denoted in the numerical experiments as HLL - TAv.

On the other hand, if the Galerkin decomposition method is applied to  $\alpha_{i+1/2}^{s,n}, \, s=1,2$
\begin{equation}
    \alpha_{i+1/2}^{s,n} \approx \sum_{k=1}^M \hat{\alpha}_k^{s,n} \mathcal{A}_{i+1/2,k}^s, \, s=1,2,
\end{equation}
and DEIM is used to update the coefficients $\hat{\alpha}_k^{s,n}, \, s=1,2$, one has:
\begin{equation}
\begin{split}
& M_1= (\hat{\alpha}^{0,n})^T U_1 \hat{h}^n, \quad M_2= (\hat{\alpha}^{1,n})^T U_2 \hat{q}^n, \quad M_3= U_3 \hat{\alpha}^{0,n}, \\
 M_4= (\hat{\alpha}^{1,n})^T & U_4 \hat{h}^n, \quad M_5= (\hat{\alpha}^{0,n})^T U_5 \hat{q}^n, \quad M_6= (\hat{\alpha}^{1,n})^T U_6 \hat{q}^n, \quad M_7= U_7 \hat{\alpha}^{1,n},
\end{split}
\end{equation}
where the training 3D linear tensors  $U_1=((U_1)_{plk})$, $U_2=((U_2)_{plk})$, $U_4=((U_4)_{plk})$, $U_5=((U_5)_{plk})$ and $U_6=((U_6)_{plk})$, and the training matrices $U_3=((U_3)_{pk})$ and  $U_7=((U_7)_{pk})$ are given by

\begin{equation}
\begin{split}
(U_1)_{plk}&=  (u_1)_1 \Phi_{1,1,p}+ (u_1)_{N_x} \Phi_{1,N_x,p} \\
&+ \sum_{i=2}^{N_x -1 } \left(  \mathcal{A}_{i+1/2,l}^0 \left( \Phi_{1,i+1,k}-  \Phi_{1,i,k} \right)  -  \mathcal{A}_{i-1/2,l}^0 \left( \Phi_{1,i,k} -   \Phi_{1,i-1,k}  \right)  \right) \Phi_{1,i,p},
\end{split}
\end{equation}
\begin{equation}
\begin{split}
(U_2)_{plk}&=  (u_2)_{1} \Phi_{1,1,p} + (u_2)_{N_x} \Phi_{1,N_x,p} \\
&+ \sum_{i=2}^{N_x -1 } \left( \mathcal{A}_{i+1/2,l}^1 \left( \Phi_{2,i+1,k}- \Phi_{2,i,k} \right) - \mathcal{A}_{i-1/2,l}^1 \left( \Phi_{2,i,k}- \Phi_{2,i-1,k} \right) \right) \Phi_{1,i,p}, 
\end{split}
\end{equation}
\begin{equation}
\begin{split}
(U_3)_{pk}&=  (u_3)_1 \Phi_{1,1,p} + (u_3)_{N_x} \Phi_{1,N_x,p} \\
&+ \sum_{i=2}^{N_x -1 } \left( \mathcal{A}_{i+1/2,k}^0 \left( z_{i+1}- z_{i} \right) - \mathcal{A}_{i-1/2,k}^0 \left( z_{i}- z_{i-1} \right) \right) \Phi_{1,i,p},
\end{split}
\end{equation}
\begin{equation}
\begin{split}
(U_4)_{plk}&=  (u_4)_1 \Phi_{2,1,p} + (u_4)_{N_x} \Phi_{2,N_x,p}\\
&+ \sum_{i=2}^{N_x -1 } \left( \mathcal{A}_{i+1/2,l}^1 \left( -(\overline{\widetilde{u}_{i+1/2}^n})^2 + g \overline{\widetilde{h}_{i+1/2}^n} \right) \left( \Phi_{1,i+1,k}- \Phi_{1,i,k} \right) \right) \Phi_{2,i,p}\\
&- \sum_{i=2}^{N_x -1 } \left( \mathcal{A}_{i-1/2,l}^1 \left( -(\overline{\widetilde{u}_{i-1/2}^n})^2 + g \overline{\widetilde{h}_{i-1/2}^n} \right) \left( \Phi_{1,i,k}- \Phi_{1,i-1,k} \right) \right) \Phi_{2,i,p},
\end{split}
\end{equation}
\begin{equation}
\begin{split}
(U_5)_{plk}&=  (u_5)_{1} \Phi_{2,1,p}+ (u_5)_{N_x} \Phi_{2,N_x,p} \\
&+ \sum_{i=2}^{N_x -1 } \left( \mathcal{A}_{i+1/2,l}^0 \left( \Phi_{2,i+1,k}- \Phi_{2,i,k} \right) - \mathcal{A}_{i-1/2,l}^0 \left( \Phi_{2,i,k}- \Phi_{2,i-1,k} \right) \right) \Phi_{2,i,p}, 
\end{split}
\end{equation}
\begin{equation}
\begin{split}
(U_6)&_{plk}=  (u_6)_{1} \Phi_{2,1,p}+ (u_6)_{N_x} \Phi_{2,N_x,p} \\
&+ \sum_{i=2}^{N_x -1 } 2 \left(  \mathcal{A}_{i+1/2,l}^1 \overline{\widetilde{u}_{i+1/2}^n} \left( \Phi_{2,i+1,k}- \Phi_{2,i,k} \right) -  \mathcal{A}_{i-1/2,l}^1 \overline{\widetilde{u}_{i-1/2}^n} \left( \Phi_{2,i,k}- \Phi_{2,i-1,k} \right) \right) \Phi_{2,i,p}, 
\end{split}
\end{equation}
\begin{equation}
\begin{split}
(U_7)_{pk}&=  (u_7)_1 \Phi_{2,1,p} + (u_7)_{N_x} \Phi_{2,N_x,p}\\
&+ \sum_{i=2}^{N_x -1 } \left( \mathcal{A}_{i+1/2,k}^1 \left( -(\overline{\widetilde{u}_{i+1/2}^n})^2 + g \overline{\widetilde{h}_{i+1/2}^n} \right) \left( z_{i+1}- z_{i} \right) \right) \Phi_{2,i,p}\\
&- \sum_{i=2}^{N_x -1 } \left( \mathcal{A}_{i-1/2,k}^1 \left( -(\overline{\widetilde{u}_{i-1/2}^n})^2 + g \overline{\widetilde{h}_{i-1/2}^n} \right) \left( z_{i}- z_{i-1} \right) \right) \Phi_{2,i,p},
\end{split}
\end{equation}
and \irene{$(u_m)_s, \, m=1, \cdots, 7, \, s\in \{1, N_x\},$ are the boundary terms.} This choice will be denoted in the numerical experiments as HLL - DEIM.

\section{Well-balanced property}\label{sec:WB_theorem}
The choice of well-balanced methods to build the FOMs is justified by the following result:
\begin{Theorem}\label{theo:wb}
\irene{Let us suppose that the numerical scheme \eqref{met_num_FOM_operador} for the FOM is well-balanced for a stationary solution $W^*$, i.e., $W_i^n=W_i^1, \, \forall i, \, \forall n$, where $W_i^1$ are the cell-averages of $W^*$. Then, the associated ROM is also well-balanced for $W^*$.}
\end{Theorem}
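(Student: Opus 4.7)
The plan is to combine two ingredients: the hypothesis that $W^*$ is a fixed point of the FOM scheme, and the fact that the Galerkin projection commutes with the FOM update once $W^*$ is exactly representable in the POD basis. First, unfolding the well-balanced assumption of the FOM, since $W_i^{n+1}=W_i^n=W_i^1$ holds cell by cell when the scheme is initialized at $W^*$, substitution into \eqref{met_num_FOM_operador} gives the pointwise equilibrium identity
\begin{equation*}
\mathcal{L}\bigl(W_{i-1}^1,W_i^1,W_{i+1}^1\bigr)=0,\qquad \forall\, i.
\end{equation*}
This is the only piece of information used from the FOM; everything else is a consequence of the construction of the ROM.

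Next, I would identify the natural candidate for the ROM equilibrium, namely the vector of POD coefficients
\begin{equation*}
\hat{w}_{j,p}^{1}:=\sum_{i=1}^{N_x} W_{j,i}^{1}\,\Phi_{j,i,p},\qquad p=1,\dots,M,
\end{equation*}
which is simply the Galerkin projection of $W^*$ onto the POD basis. Assuming $W^{1}$ belongs to the span of the retained POD modes, orthonormality of the basis yields the exact reconstruction $Wr_i^{1}=\sum_{k=1}^M \hat{w}_{j,k}^{1}\Phi_{j,i,k}=W_i^{1}$ for every cell $i$. Inserting this into one step of the ROM update gives
\begin{equation*}
\hat{w}_{j,p}^{2}=\hat{w}_{j,p}^{1}-\frac{\Delta t}{\Delta x}\sum_{i=1}^{N_x}\mathcal{L}_j\bigl(W_{i-1}^{1},W_i^{1},W_{i+1}^{1}\bigr)\,\Phi_{j,i,p}=\hat{w}_{j,p}^{1},
\end{equation*}
where the second equality uses the equilibrium identity above. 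A trivial induction on $n$ then yields $\hat{w}_{j,p}^{n}=\hat{w}_{j,p}^{1}$ for all $n\ge 1$, which is exactly the well-balanced property of the ROM for $W^*$.

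The main obstacle, and the only non-routine part of the argument, is the representability step: one must justify that $W^{1}$ lies in the span of the retained POD modes so that $Wr_i^{1}=W_i^{1}$ holds exactly rather than only approximately. Under the hypothesis of the theorem, the FOM trajectory starting from $W^*$ consists of copies of $W^{1}$, so whenever these snapshots are included in the training set, $W^{1}$ belongs to the column space of $M_{w_j}$ and hence to the span of the left singular vectors with nonzero singular values. Provided the truncation to $M$ modes does not discard the component of $W^{1}$ (which, by orthonormality, corresponds to keeping the modes that carry its energy), the exact representability required in the algebra above is guaranteed, and the argument closes. Any looseness here would translate into a small well-balanced error at the ROM level controlled by the discarded singular values.
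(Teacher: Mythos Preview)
Your argument follows essentially the same route as the paper's proof: use the FOM equilibrium identity $\mathcal{L}(W_{i-1}^1,W_i^1,W_{i+1}^1)=0$, show that the ROM initial data reconstructs $W^1$ exactly, and then propagate by induction. The one place where you hedge is precisely the place the paper makes sharp. You phrase the representability step conditionally (``assuming $W^1$ belongs to the span of the retained POD modes'', ``provided the truncation to $M$ modes does not discard the component of $W^1$'') and close with a remark that any residual would be controlled by discarded singular values. The paper removes this conditional by observing that, under the hypothesis, every column of the snapshot matrix $M_{w_j}$ equals $w_j^1$, so $M_{w_j}M_{w_j}^T$ has rank one with unique nonzero eigenvalue $N_x\|w_j^1\|^2$ and eigenvector $w_j^1$ itself. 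Hence the POD basis consists of a single mode proportional to $w_j^1$, the selection criterion forces $M=1$, and the reconstruction $wr_{j,i}^1=w_{j,i}^1$ is exact with no proviso needed. You already have all the ingredients for this (you note the trajectory consists of copies of $W^1$); you just stop short of drawing the rank-one conclusion that would let you drop the conditional language.
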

\begin{proof}
Let us first show that, if and the initial cell-averages $W_i^1$ belong to a stationary solution $W^*$ for which the FOM is well-balanced, the matrices $M_{w_j}M_{w_j}^T$ have only one non-zero eigenvalue and, consequently, the ROM has only one mode, i.e., $M=1$.
Indeed, all the columns of $M_{w_j}$ are identical:
\begin{equation}
\begin{split}
 M_{w_j}  &=  \begin{pmatrix}
 \vspace{2mm}
w_{j,1}^1 & w_{j,1}^2 & \cdots & w_{j,1}^{N_T}\\
w_{j,2}^1 & w_{j,2}^2 & \cdots & w_{j,1}^{N_T}\\
\vdots & \vdots & \ddots & \vdots\\
w_{j,N_x}^1 & w_{j,N_x}^2 & \cdots & w_{j,N_x}^{N_T}
\end{pmatrix} = \begin{pmatrix}
 \vspace{2mm}
w_{j,1}^1 & w_{j,1}^1 & \cdots & w_{j,1}^{1}\\
w_{j,2}^1 & w_{j,2}^1 & \cdots & w_{j,2}^{1}\\
\vdots & \vdots & \ddots & \vdots\\
w_{j,N_x}^1 & w_{j,N_x}^1 & \cdots & w_{j,N_x}^{1}
\end{pmatrix}\\
&= \left( w_j^1 | \cdots | w_j^1 \right).
\end{split}
\end{equation}
Let us consider a vector $v \in \mathbb{R}^{N_x}$. Since
\begin{equation}
     M_{w_j}^T v =  (w_j^1)^T  v  \begin{pmatrix}
 \vspace{2mm}
1\\
\vdots \\
1
\end{pmatrix},
\end{equation}
one has
\begin{equation}
    \ker(M_{w_j}^T)=(w_j^1)^{\perp}.
\end{equation}
Let us compute the eigenvalues of $M_{w_j}M_{w_j}^T$:
\begin{itemize}
    \item If $v \in (w_j^1)^{\perp} $:
    \begin{equation}
        M_{w_j}M_{w_j}^T v = (w_j^1)^T  v M_{w_j} \begin{pmatrix}
 \vspace{2mm}
1\\
\vdots \\
1
\end{pmatrix} = 0.
    \end{equation}
 \item If $v = w_j^1$:
   \begin{equation}
        M_{w_j}M_{w_j}^T  w_j^1 =  \left\|(w_j^1)\right\|^2 M_{w_j} \begin{pmatrix}
 \vspace{2mm}
1\\
\vdots \\
1
\end{pmatrix} = N_x  \left\|(w_j^1)\right\|^2 w_j^1.
    \end{equation}
\end{itemize}
Thus, the matrices $M_{w_j}M_{w_j}^T, \, j=1, \cdots, N$ have only one non-zero eigenvalue $\sigma= N_x  \left\|(w_j^1)\right\|^2$ with  eigenvector $w_j^1$.

Consequently, only one POD mode is selected to build the associated ROM. If we denote by $wP_j^n$ the projection of $w_j^n$ into the functions of the POD basis, one has:
\begin{equation}
    \frac{1}{N_T} \left\|w_j^n-wP_j^n \right\|^2 = \sum_{m=M+1}^{r} \sigma_m.
\end{equation}
Notice that, since $\sigma_m=0, \, \forall m>1$, one has the equality $w_j^1=wP_j^1$. Due to the fact that the initial condition for the ROM is computed from the FOM by projecting in the POD basis, at the initial time $t^1$, one has 
\begin{equation}
wr_{j,i}^1=\hat{w}_{j,1}^1 \Phi_{j,i,1}=wP_{j,i}^1=w_{j,i}^1.
\end{equation}
This implies that
\begin{equation}
    \mathcal{L}_j \left( Wr_{i-1}^1,Wr_{i}^1,Wr_{i+1}^1\right)=  \mathcal{L}_j \left( W_{i-1}^1,W_{i}^1,W_{i+1}^1\right),
\end{equation}
and thus
\begin{equation}
    \hat{w}_{j,1}^2=\hat{w}_{j,1}^1 \Rightarrow wr_{j,i}^2=\hat{w}_{j,1}^2 \Phi_{j,i,1}=wr_{j,i}^1, \, \forall i.
\end{equation}
Reasoning in this way, one has:
\begin{equation}
    wr_{j,i}^{n}=wr_{j,i}^1, \, \forall i, \, \forall n, \, \forall j.
\end{equation}

\end{proof}

\begin{Corollary}
\irene{The ROMs considered in  Section \ref{sec:ROM_examples} are fully exactly well-balanced, i.e., they preserve all stationary solutions for the transport equation with a linear source term and the Burgers equation with a nonlinear source term, while in the shallow water model case the ROM is exactly well-balanced for the family of stationary solutions corresponding to water at rest.}
\end{Corollary}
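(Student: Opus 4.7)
The plan is to derive the Corollary as a direct application of Theorem \ref{theo:wb} to each of the three balance laws treated in Section \ref{sec:ROM_examples}, using the well-balanced character of the underlying FOMs that was already established in Subsection \ref{subsec:FOMs}. So the task reduces to exhibiting, for each system, the exact class of stationary solutions preserved by the FOM, and then invoking the theorem to transfer the property to the ROM.

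First I would treat the transport equation with a linear source term. Here the FOM uses the exactly well-balanced reconstruction operator of \cite{Castro2020High} built from the local stationary profile $w_i^{*,n}(x) = w_i^n e^{(\alpha/c)(x-x_i)}$, which by construction satisfies \eqref{ss_condition} at every cell. Since every smooth stationary solution of \eqref{transporte_ecuacion} is locally of this exponential form, the FOM preserves all stationary solutions. I would then apply Theorem \ref{theo:wb} with $W^* $ an arbitrary stationary solution of \eqref{transporte_ecuacion} to conclude that the associated ROM inherits the same property, hence is fully exactly well-balanced. The Burgers case is handled in exactly the same way, since the FOM is again built on the exactly well-balanced reconstruction with local profile $w_i^{*,n}(x) = w_i^n e^{\alpha(x-x_i)}$, and every stationary solution of \eqref{burgers_ecuacion} matches this form cellwise; Theorem \ref{theo:wb} then yields full exact well-balancedness of the ROM.

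For the shallow water system with non-flat bathymetry and Manning friction, the FOM is only guaranteed to preserve the water-at-rest family $u=0$, $\eta = h+z = \text{const}$, because the geometric source term is discretized in the path-conservative framework with the family of straight segments (as recalled from \cite{castro2017well}). I would verify briefly, from \eqref{FOM_swe_LF_h}--\eqref{FOM_swe_LF_q} and \eqref{FOM_swe_HLL_h}--\eqref{FOM_swe_HLL_q}, that when $u=0$ and $\eta$ is constant, the discharge flux differences, the diffusive jumps in $\eta$, the centered bathymetry source, and the friction term (which vanishes because $q=0$) all cancel to produce $h_i^{n+1}=h_i^n$ and $q_i^{n+1}=q_i^n=0$ for both flux choices. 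Applying Theorem \ref{theo:wb} to each such $W^*$ then delivers well-balancedness of the corresponding ROM for the whole water-at-rest family.

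I do not expect a serious obstacle: the non-trivial content is already encapsulated in Theorem \ref{theo:wb}, and the three verifications are essentially those carried out when the FOMs were introduced. The mildest subtlety, and therefore where I would focus the exposition, is making explicit that for each member $W^*$ of the relevant family the ROM is built from snapshots lying on $W^*$, so that the single-mode reduction established in the proof of Theorem \ref{theo:wb} applies separately to every stationary solution; this is what upgrades the per-solution conclusion of the theorem to the \emph{fully} exactly well-balanced statement for the first two systems and to well-balancedness on the entire water-at-rest family in the shallow water case.
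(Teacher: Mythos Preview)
Your proposal is correct and follows the same approach as the paper: the Corollary is stated there without proof, as an immediate consequence of Theorem~\ref{theo:wb} applied to the three FOMs whose well-balanced character was established in Subsection~\ref{subsec:FOMs}. Your write-up is in fact more detailed than the paper's, and your observation that Theorem~\ref{theo:wb} is applied per stationary solution (so that ``fully exactly well-balanced'' follows by ranging over the relevant family) is a useful clarification that the paper leaves implicit.
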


\section{Simulation in parameter-dependent systems}\label{sec:prediction}
Let us suppose that the source term in \eqref{PDE} depends on a physical parameter $\mu$, i.e., the PDE can be written as follows:
\begin{equation}
    W_t+F(W)_x= G(W, \mu).
\end{equation}
Notice that the three systems of balance laws introduced in Section \ref{subsec:FOMs} are of this type. Indeed, in the transport equation with linear source term $S(w)=\beta w$ and the Burgers equation with nonlinear source term $S(w)=\beta w^2$, the parameter is $\mu= \beta$, whereas in the SW system with Manning friction \eqref{swf_ecuacion},
    \begin{equation*}
        R(W)= \left( 0, -g\displaystyle \frac{n_b^2 q |q|}{h^{7/3}} \right)^T,
    \end{equation*}
the parameter $\mu$ is the friction coefficient ($\mu= n_b$).

Sometimes, our interest is to quickly obtain good approximations of the solutions when considering variations of the parameter $\mu$. Then, our next goal is to efficiently provide accurate approximations of the solutions for different values of this parameter, enhancing the flexibility and speed of the model in capturing variations in physical scenarios. The POD-based strategy described in the previous sections will be used to build the predictive ROMs. The key to these prediction models is to include a training step, that is, numerical solutions computed with the FOM for a set of values of the parameter $\mu_l, \, l=1, \cdots, N_{\mu}$, called \textit{training set}, will be considered in the offline process. In particular, the POD basis functions must contain information of these solutions obtained for the different values of the training set. Thus, if a new value of $\mu$ which does not belong to the training set is considered, the predictive ROM will give acceptable approximations. 

Given the variable $w_j, \, j=1, \cdots N$, its snapshot matrix $ M_{w_j}$ reads now as follows:
\begin{equation}
     M_{w_j}= \left(  M_{w_j}^{\mu_1} | \cdots | M_{w_j}^{\mu_{N_{\mu}}}\right) \in \mathbb{R}^{N_x \times (N_t \cdot N_{\mu})}, 
\end{equation}
where $M_{w_j}^{\mu_l}$ is the snapshot matrix obtained with the FOM with $\mu=\mu_l, \, l=1, \cdots, N_{\mu}$. A previous work \cite{crisovan2019model} studies reduced order modeling techniques for hyperbolic conservation laws with applications in uncertainty quantification. In that study, a POD-Greedy approach combined with an empirical interpolation method (EIM) is employed to create a reduced basis space. It should be noted that this is a different approach from that presented in this section, where the technique described in Section \ref{sec:POD-ROMs} is used to build ROMs that can be used to obtain good approximations of the solutions for parameterized systems trained with a set of different parameter values when considering other parameter values outside the training set. Unlike in the paper \cite{crisovan2019model}, here we consider hyperbolic PDE systems with source terms (in fact, the parameters considered in this paper belong to this term). 


\section{Numerical experiments}\label{sec:numericalexperiments}

In all the experiments, free boundary conditions are considered and the CFL parameter is set to $0.9$.

\subsection{Validation of the well-balanced property}
This first experiment is devoted to the validation of the well-balanced property proved in Theorem \ref{theo:wb}. Stationary solutions for the three systems described in Section \ref{subsec:FOMs} have been considered, checking that the corresponding discrete stationary solution defined by its projection into the POD basis function is preserved in the ROM. Since a stationary solution is chosen as initial condition, the number of POD modes is $M=1$. Thus, only one time window $N_v=1$ is considered.

\subsubsection{Transport equation with a linear source term} \label{test_1_1}

 \begin{figure}[h]
 	\begin{center}
 		\subfloat[][Initial condition]{\includegraphics[width=0.49\textwidth]{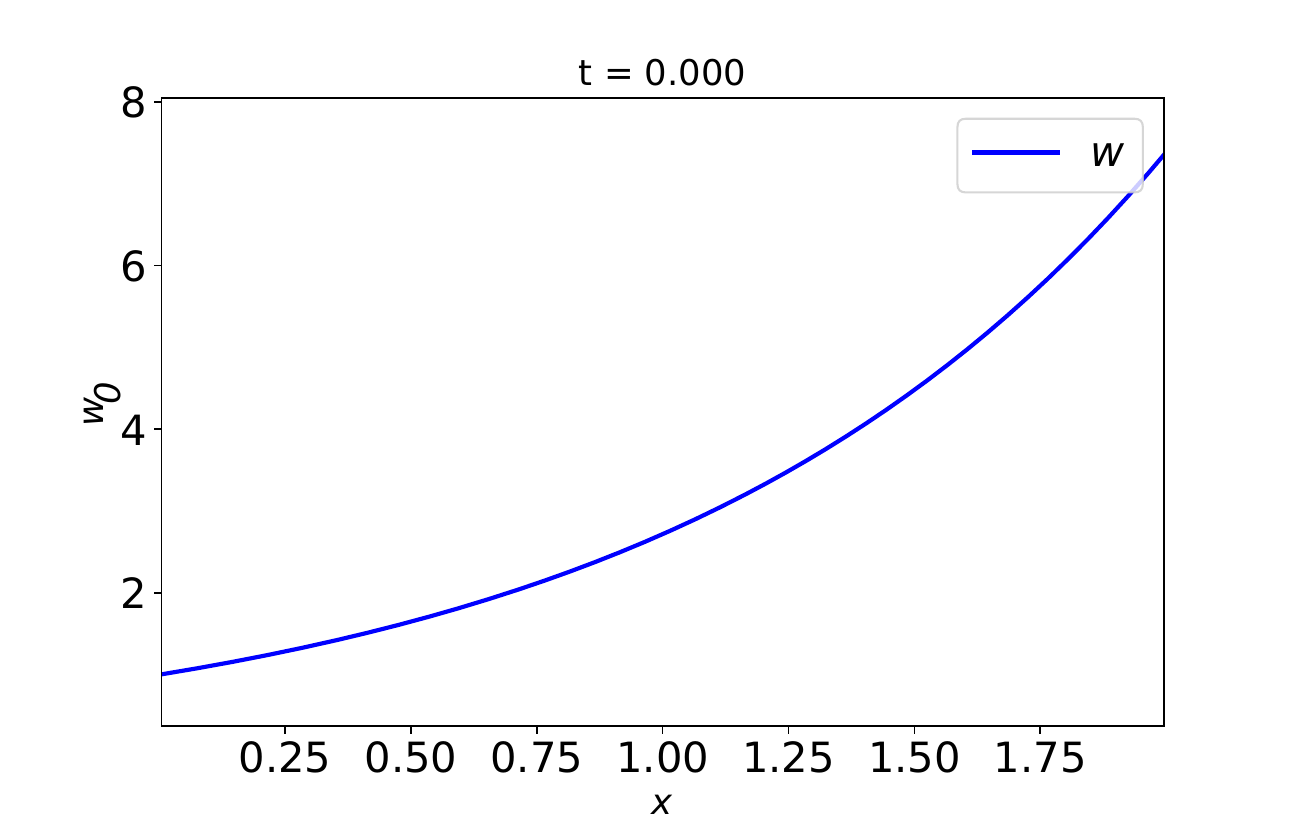}}
 		\subfloat[][Differences]{\includegraphics[width=0.49\textwidth]{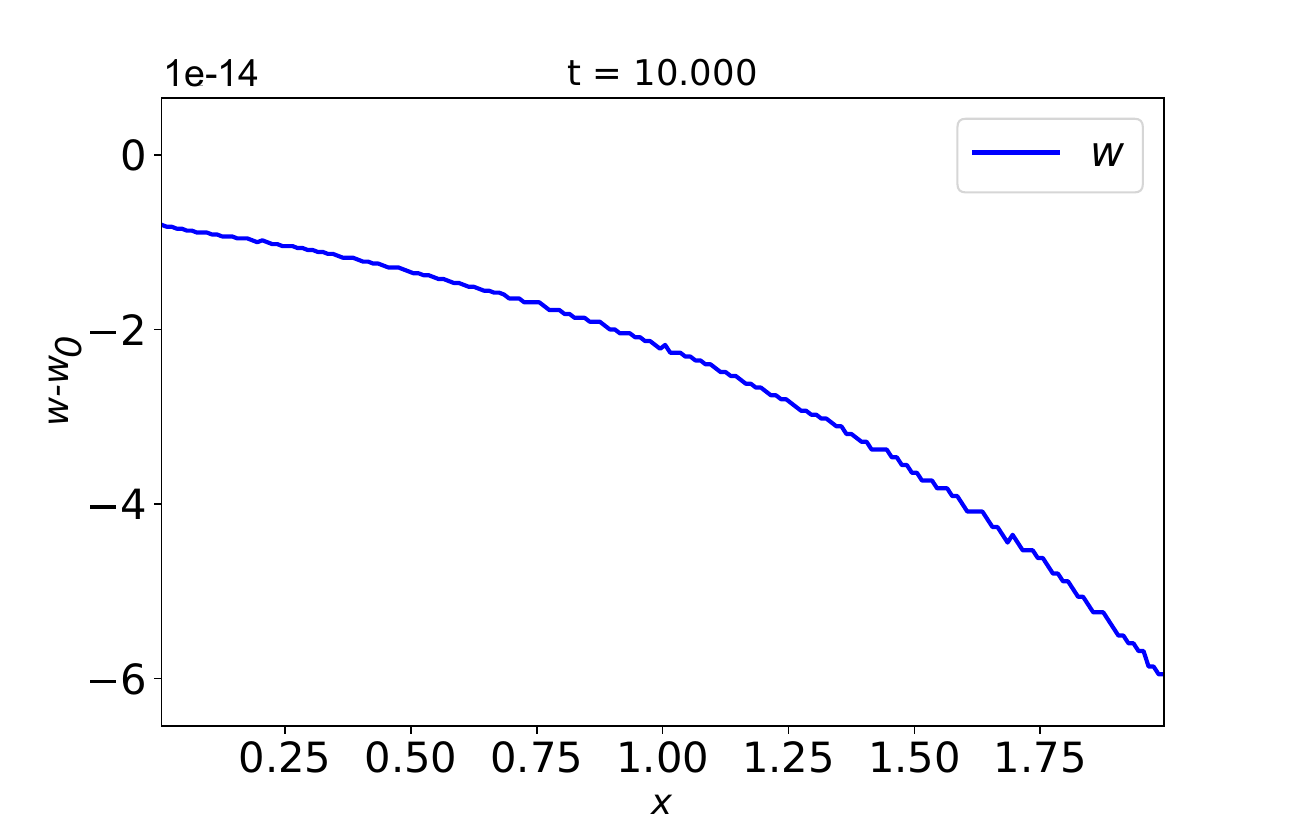}}
 		\caption{Test \ref{test_1_1}. Well-balanced property check: transport equation. Initial condition (left) and differences between the initial and final numerical solutions for a 200-cell mesh at $T_f=10$ s with the ROM (right). }
 		\label{transporte_wbcheck_cini+dif}
 	\end{center}
 \end{figure}
Let us consider the transport equation with linear source term \eqref{transporte_ecuacion}, with $c=1$ and $\beta=1$. Let us consider the space interval  $[0, 2]$ and the time interval  $[0,10]$. As initial condition, we consider discrete stationary solution given by the projection into the POD basis of the stationary solution
$$
w_0(x)=e^{x}$$ 
for the FOM (see Figure \ref{transporte_wbcheck_cini+dif}, left). $L^1$-errors between the initial and final cell-averages of the ROM have been computed for different meshes (see Table \ref{transporte_wbcheck_erroresCPU}). In particular, Figure \ref{transporte_wbcheck_cini+dif} right shows the differences bewteen the initial and final solutions for a 200-cell mesh at $T_f=10$ s with the ROM. CPU times in seconds required by the FOM and ROM to obtain the solution at final time are also shown in Table \ref{transporte_wbcheck_erroresCPU}, \refuno{with the corresponding speed-ups also reported}. As expected, the ROM is faster than the FOM, obtaining a significant reduction in computational time if the number of cells is high.

 \begin{table}[h]
\centering
\begin{tabular}{|c|c|cc|c|} \hline
Cells & Errors with ROM & \multicolumn{2}{c|}{CPU time} & \refuno{Speed-up}\\
 & & FOM & ROM & \\\hline
200&   5.17e-14 & 1.24 & 1.13 & 1.10 \\
400&   1.26e-15 & 6.01 & 4.35 & 1.38 \\
800&  6.40e-14 & 34.23& 16.65 & 2.06\\
1600& 0.0  & 201.24 & 68.02 & 2.95\\
 \hline
\end{tabular}
\caption{Test \ref{test_1_1}. Well-balanced property check: transport equation. $L^1$-errors between the initial and final cell-averages of the ROM, CPU times in seconds required by the FOM and ROM to obtain the solution at $T_f=10$ s \refuno{and speed-ups}.} 
\label{transporte_wbcheck_erroresCPU}
\end{table}

\subsubsection{Burgers equation with a nonlinear source term} \label{test_1_2}

 
Let us consider the Burgers equation with a nonlinear source term \eqref{burgers_ecuacion}, with $\beta=1$. Let us consider the space interval  $[0, 2]$ and the time interval  $[0,10]$. As initial condition, we consider discrete stationary solution given by the projection into the POD basis of the stationary solution
$$
w_0(x)=0.1e^{x}$$ 
for the FOM. $L^1$-errors between the initial and final cell-averages of the ROM have been computed for different meshes (see Table \ref{burgers_wbcheck_erroresCPU}).  CPU times in seconds required by the FOM and ROM to obtain the solution at final time \refuno{and speed-ups} are also shown in Table \ref{burgers_wbcheck_erroresCPU}. Again, the ROM is faster than the FOM, specially if fine meshes are considered.

 \begin{table}[h]
\centering
\begin{tabular}{|c|c|cc|c|} \hline
Cells & Errors with ROM & \multicolumn{2}{c|}{CPU time} & \refuno{Speed-up}\\
 & & FOM & ROM & \\\hline
200&   0.0 & 0.85 & 0.8 &1.06\\
400&   2.33e-15 & 4.38 & 3.21 & 1.36\\
800&  1.09e-16 & 22.88 & 11.74 & 1.95 \\
1600& 8.21e-14  & 130.46 & 50.12 &2.60\\
 \hline
\end{tabular}
\caption{Test \ref{test_1_2}. Well-balanced property check: Burgers equation. $L^1$-errors between the initial and final cell-averages of the ROM and CPU times in seconds required by the FOM and ROM to obtain the solution at $T_f=10$ s \refuno{and speed-ups}.} 
\label{burgers_wbcheck_erroresCPU}
\end{table}
 
\subsubsection{Frictionless shallow water equations with non-flat bathymetry} \label{test_1_3}

\begin{figure}[h]
 \begin{center}
   \subfloat[][Free surface]{\includegraphics[width=0.49\textwidth]{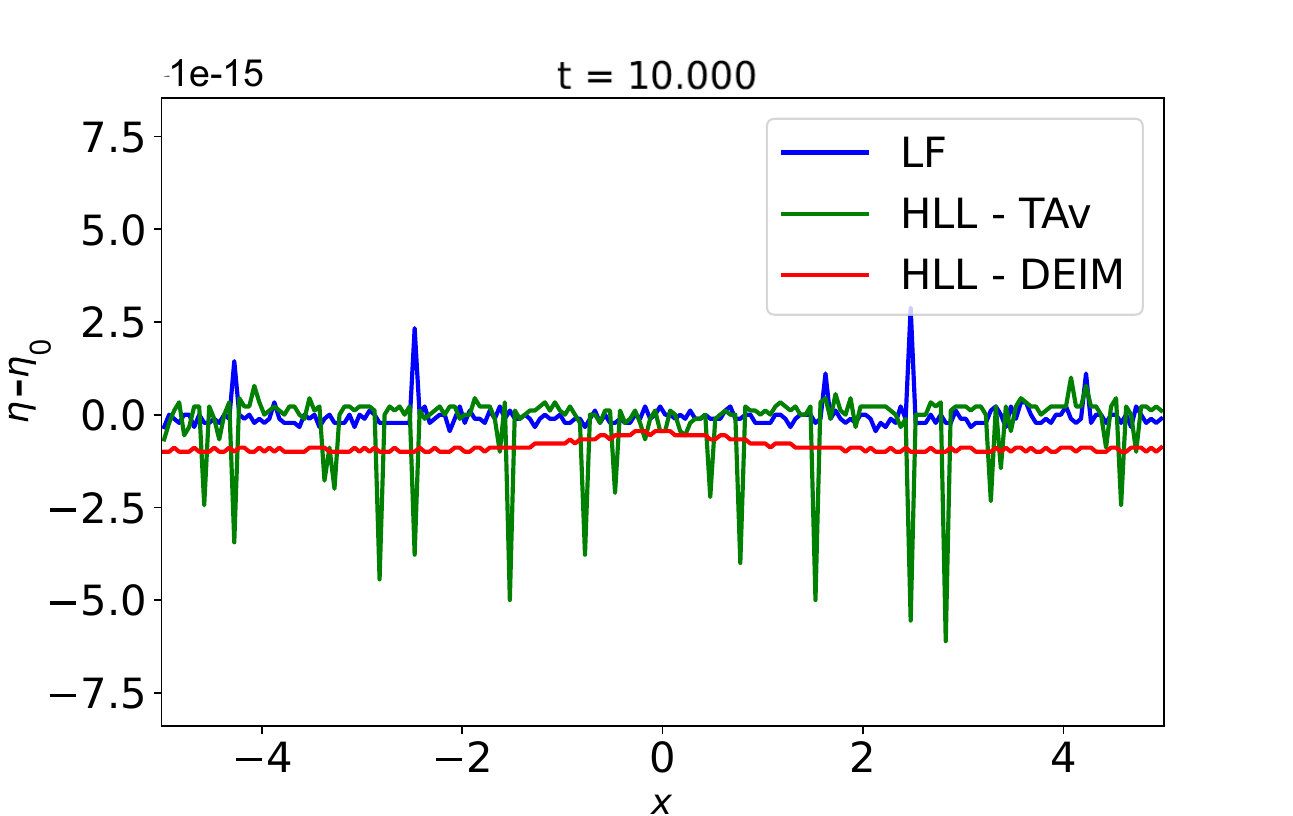}}\vspace{2mm}
   \subfloat[][Velocity]{\includegraphics[width=0.49\textwidth]{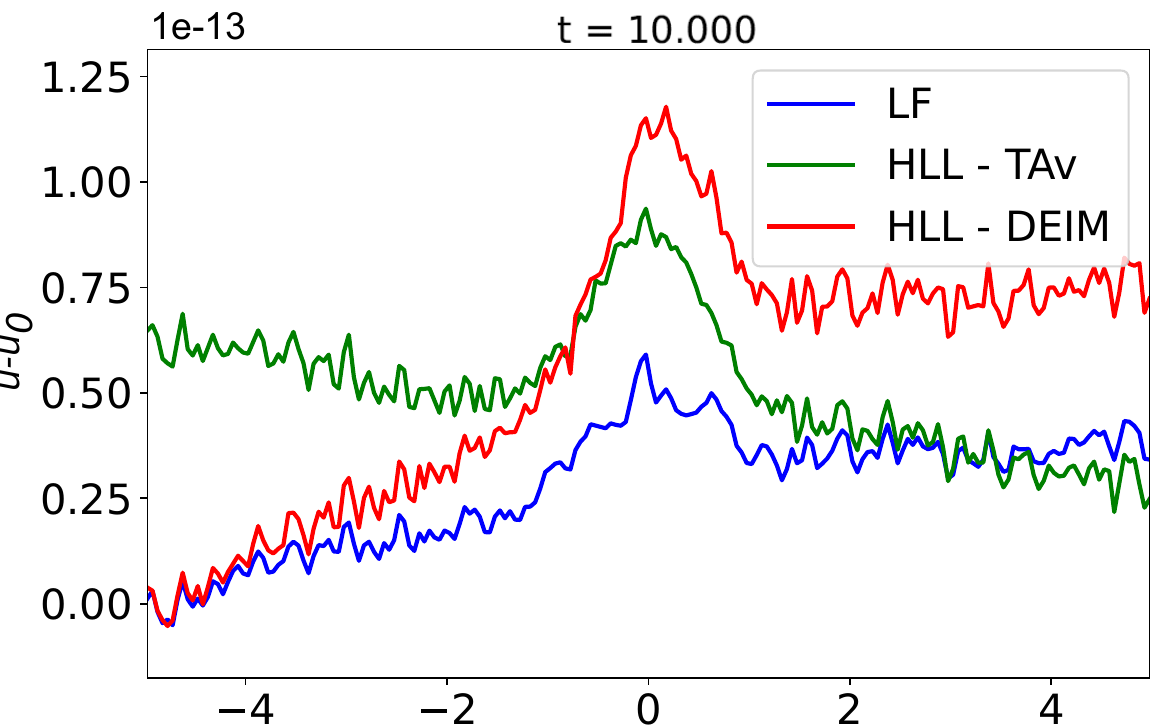}}
     \caption{Test \ref{test_1_3}. Well-balanced property check: shallow water equations. Differences between the initial and final numerical solutions for a 200-cell mesh at $T_f=10$ s with the ROM: free surface $\eta$ (left) and velocity $u$ (right). } 
     \label{sw_wbcheck_dif}
  \end{center}
 \end{figure} 
Let us consider the SW system \eqref{swf_ecuacion} with $n_b=0$. Let us consider the space interval  $[-5,5]$ and the time interval  $[0,10]$. As initial condition, we consider discrete stationary solution given by the projection into the POD basis of the stationary solution
$$
h_0(x)=-z(x), \quad u_0(x)=0, \quad z(x)=-1+0.5e^{-x^2},
$$ 
for the FOM. The gravity is $g=9.81$. The modified Lax-Friedrichs flux and the HLL scheme (considering the two versions HLL - TAv and HLL - DEIM) have been applied. $L^1$-errors between the initial and final cell-averages of the ROM have been computed for different meshes (see Tables \ref{sw_wbcheck_erroresCPU_LF} and \ref{sw_wbcheck_erroresCPU_HLL} for modified Lax-Friedrichs and HLL, respectively). In particular, Figure \ref{sw_wbcheck_dif} shows the differences bewteen the initial and final solutions for a 200-cell mesh at $T_f=10$ s with the ROM for the different schemes. CPU times in seconds required by the FOM and ROM to obtain the solution at final time are also shown in Tables \ref{sw_wbcheck_erroresCPU_LF} and \ref{sw_wbcheck_erroresCPU_HLL}, \refuno{which also include the speed-up}. For this test case, the ROM is much faster than the FOM: particularly remarkable is the reduction obtained for the fine mesh with 1600 cells, where the CPU time is reduced by almost 6 times for the modified Lax-Friedrichs scheme and more than 26 times for the HLL.

 \begin{table}[h]
\centering
\begin{tabular}{|c|cc|cc|c|} \hline
Cells & \multicolumn{2}{c|}{Errors with ROM} & \multicolumn{2}{c|}{CPU time} & \refuno{Speed-up}\\
 & h & q & FOM & ROM & \\\hline
200&   3.00e-14 & 2.41e-13 & 2.72 & 1.31 & 2.07\\
400&   2.94e-13 & 5.91e-13 & 7.31 & 4.01 & 1.82\\
800&   1.59e-13 & 5.10e-13 & 24.38 & 8.62 & 2.82\\
1600&   4.32e-15 & 7.35e-14 & 96.79 & 16.70 & 5.80 \\
 \hline
\end{tabular}
\caption{Test \ref{test_1_3}. Well-balanced property check: shallow water equations. Modified Lax-Friedrichs  scheme. $L^1$-errors between the initial and final cell-averages of the ROM and CPU times in seconds required by the FOM and ROM to obtain the solution at $T_f=10$ s, \refuno{and the speed-ups.}}
\label{sw_wbcheck_erroresCPU_LF}
\end{table}

\begin{table}[h]
\centering
\resizebox{\textwidth}{!}{%
\begin{tabular}{|c|cc|cc|ccc|cc|} \hline
Cells & \multicolumn{2}{c|}{Errors (TAv)} & \multicolumn{2}{c|}{Errors (DEIM)} & \multicolumn{3}{c|}{CPU time} & \multicolumn{2}{c|}{\refuno{Speed-up}} \\
 & $h$ & $q$ & $h$ & $q$ & FOM & TAv & DEIM & TAv & DEIM \\\hline
200  & 2.92e-14 & 4.72e-14 & 3.02e-14 & 4.78e-13 & 8.16 & 2.11 & 2.13 & 3.87 & 3.83 \\
400  & 3.50e-13 & 3.99e-13 & 2.77e-13 & 2.14e-13 & 23.92 & 4.08 & 4.18 & 5.86 & 5.72 \\
800  & 2.74e-14 & 2.87e-13 & 6.29e-14 & 3.03e-13 & 90.70 & 7.76 & 7.95 & 11.69 & 11.41 \\
1600 & 4.19e-15 & 1.18e-13 & 7.58e-14 & 4.22e-14 & 438.20 & 15.34 & 16.50 & 28.57 & 26.56 \\
\hline
\end{tabular}
}
\caption{Test \ref{test_1_3}. Well-balanced property check for the shallow water equations using the HLL scheme. The table reports the $L^1$-errors between the initial and final cell-averages for both ROM-TAv and ROM-DEIM, the CPU times (in seconds) required by the FOM and each ROM variant to compute the solution at $T_f = 10$ s, \refuno{and the corresponding speed-ups obtained by each reduced model.}}
\label{sw_wbcheck_erroresCPU_HLL}
\end{table}

\subsection{Relationship between the number of POD modes, time windows and \newline errors} 
\subsubsection{Transport equation with a linear source term} \label{test_2_1}
Let us first consider the transport equation with linear source term \eqref{transporte_ecuacion}, with $c=1$ and $\beta=1$. The space interval is $[0, 2]$ and the time interval  $[0,0.8]$. As initial condition, we consider discrete stationary solution given by the projection into the POD basis of a small perturbation of a stationary solution
$$
w_0(x)=e^{x}+0.1e^{-100(x-0.3)^2}$$ 
(see Figure \ref{transporte_pert_cini+ErvsNv} (left)). We consider a mesh with 200 cells. 

\begin{figure}[h]
 \begin{center} 
   \subfloat[][Initial condition ]
{\includegraphics[width=0.49\textwidth]{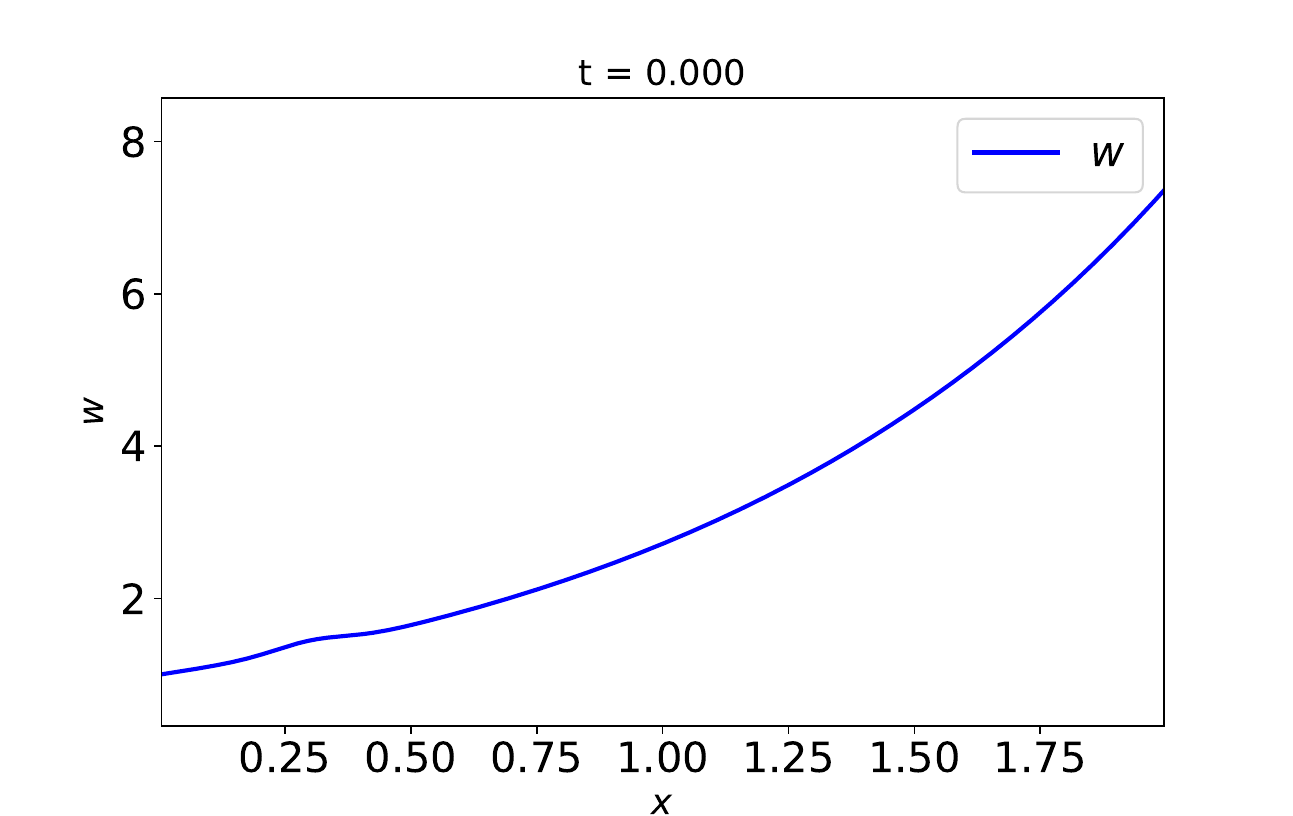}}
 \subfloat[][POD modes vs $L^1$-errors ]
{ \includegraphics[width=0.49\textwidth]{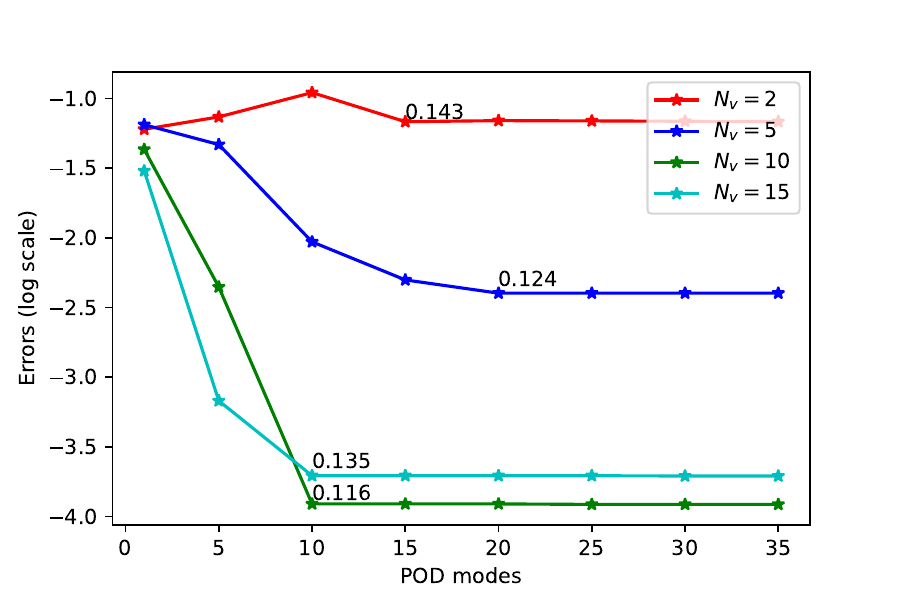}}
     \caption{Test \ref{test_2_1}. Transport equation:  small perturbation of a stationary solution. Initial condition (left) and number of POD modes vs the $L^1$-errors in logarithmic scale for different number of time windows (right).} 
     \label{transporte_pert_cini+ErvsNv}
  \end{center}
 \end{figure}

In order to study the relationship between the number of POD modes $M$ and the time windows $N_v$,  $L^1$-errors between the numerical solutions of the FOM and ROM have been computed in the final time $T_f=0.8$ s considering different numbers of modes for several values of $N_v$. Figure \ref{transporte_pert_cini+ErvsNv} (right) shows the results obtained: the number of POD modes vs the $L^1$-errors in logarithmic scale are plotted. As expected, it is observed that for each value of $N_v$, the errors start to decay with increasing $M$ and end up stabilising. In addition, the numbers appearing at each line for the value $M$ at which the errors stagnate are the CPU times obtained. Note that a large number of modes and time windows are not necessary: in this case, taking $M=10$ with $N_v=10$, we obtain errors of the order of 1e-4. \refuno{The CPU time required by the FOM to reach the final solution at time $t = 0.8$ s was 0.43 s. In Table~\ref{transporte_tab:speedup_vs_Nv}, the speed-ups between the FOM and the ROM are reported for the different time windows, considering the number of modes at which the errors have stabilized, as observed in Figure~\ref{transporte_pert_cini+ErvsNv} (right).}

\refuno{
\begin{table}[h]
\centering
\begin{tabular}{|c|c|c|c|}
\hline
$N_v$ & $M$ & ROM CPU time & Speed-up \\
\hline
2  & 15 & 0.143 & 3.01 \\
5  & 20 & 0.124 & 3.47 \\
10 & 10 & 0.116 & 3.71 \\
15 & 10 & 0.135 & 3.19 \\
\hline
\end{tabular}
\caption{\refuno{Test \ref{test_2_1}. Transport equation: speed-ups obtained by the ROM for different numbers of time windows $N_v$, computed as the ratio between the FOM time (0.43 s) and the ROM CPU times. The number of modes corresponds to the value at which the error was observed to stabilize in Figure \ref{transporte_pert_cini+ErvsNv} (right).}}
\label{transporte_tab:speedup_vs_Nv}
\end{table}
}


 \begin{figure}[h]
 \begin{center}
   \subfloat[][$t=0.8$ s.]{\includegraphics[width=0.49\textwidth]{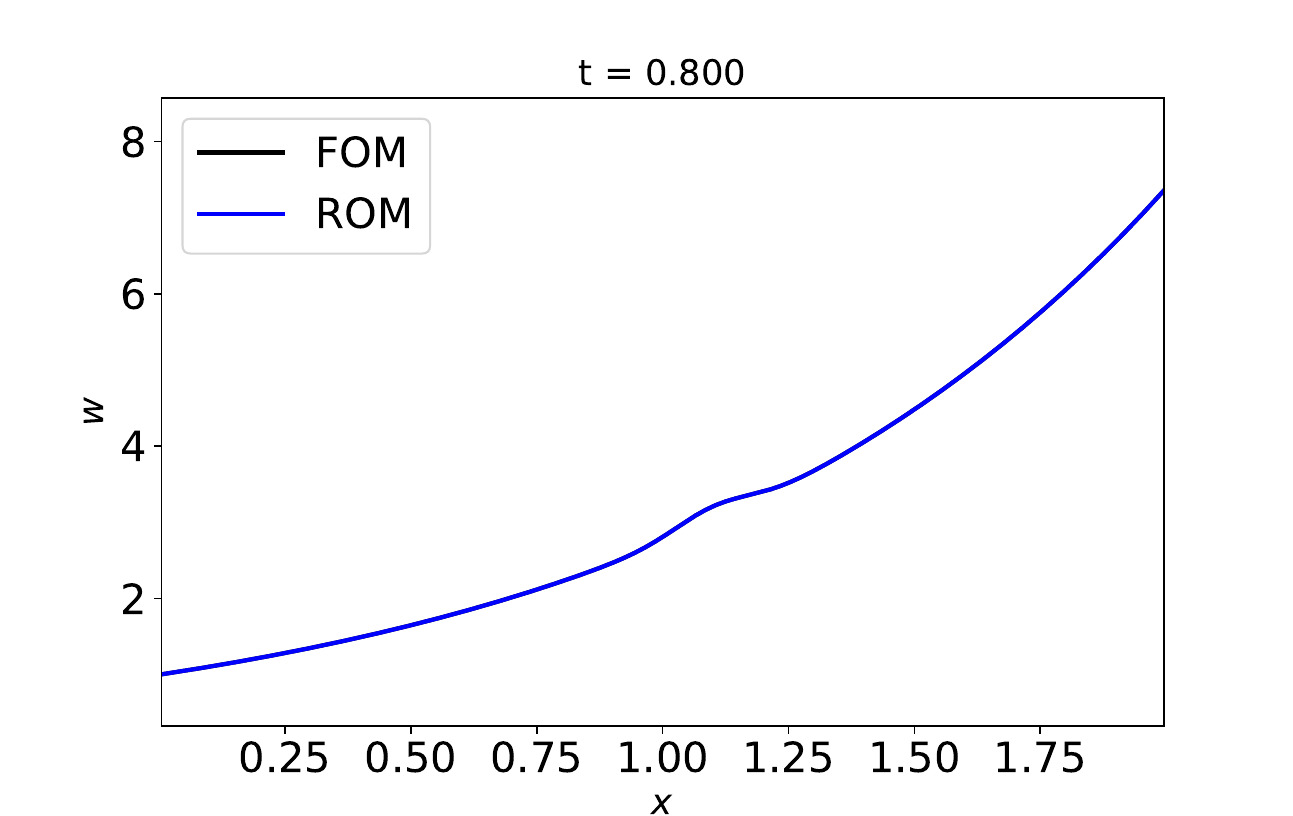}}\vspace{2mm}
    \subfloat[][$t=10$ s.]{\includegraphics[width=0.49\textwidth]{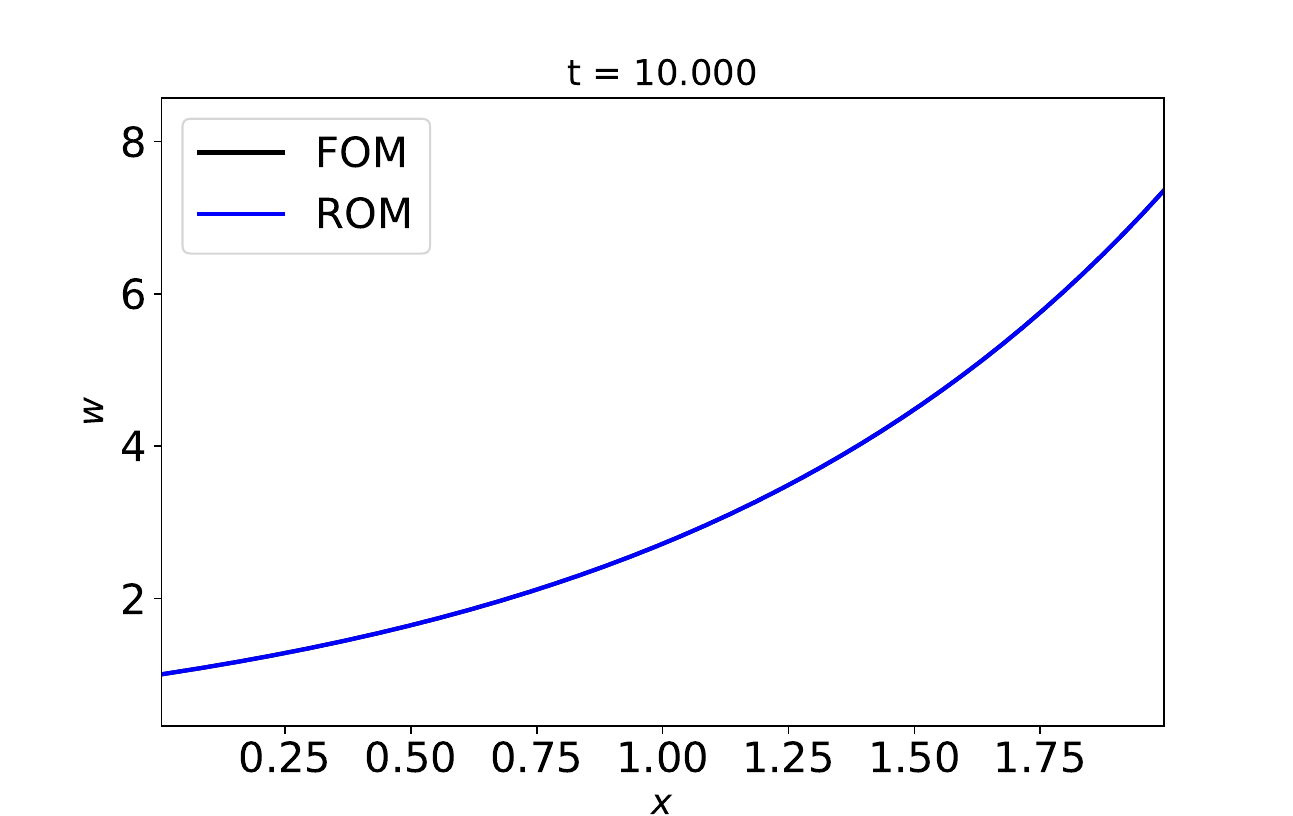}}
     \caption{Test \ref{test_2_1}. Transport equation: small perturbation of a stationary solution. Numerical solutions of FOM and ROM  at $t=0.8$ s. and $t=10$ s.} 
     \label{transporte_pert_t08+10}  
  \end{center}
 \end{figure} 
 
 Moreover, since a perturbation of a steady state is selected as initial condition, let us check what happens when the perturbation leaves the domain. For this purpose, we now consider a total simulation time $T_f=10$ s. and $N_v=100$. Figure \ref{transporte_pert_t08+10} shows the numerical solutions obtained for FOM and ROM at times $t=0.8$ s. and $t=10$ s. The difference obtained in  $L^1$-norm between both solutions is equal to 7.97e-8. In addition, we have measured the error between the last two iterations of the ROM to check that a steady state has been reached: indeed, the error obtained is 5.62e-16.

\subsubsection{Burgers equation with a nonlinear source term} \label{test_2_2}
Let us consider the Burgers equation with a nonlinear source term \eqref{burgers_ecuacion}, with $\beta=1$. The space interval is $[0, 2]$ and the time interval  $[0,3]$. As initial condition, we consider discrete stationary solution given by the projection into the POD basis of a  perturbation of a stationary solution
$$
u_0(x)=0.1e^{x}+0.1e^{-100(x-0.3)^2}.
$$ 
We consider a mesh with 200 cells. 


Let us study the relationship between the number of time windows $N_v$ and the number of POD modes $M$ given a tolerance $\varepsilon_{\text{POD}}$ for the criterion to choice $M$ in \eqref{criterio_eleccionModos}. Given different tolerances $\varepsilon_{\text{POD}}$, Figure \ref{burgers_pert_Nv-vs-POD} (left) shows the numer of POD modes obtained for some values of $N_v$. As expected, the number of POD modes is considerably reduced as $N_v$ increases.


 \begin{figure}[h]
 \begin{center}
   \subfloat[][Time windows vs POD modes]{\includegraphics[width=0.49\textwidth]{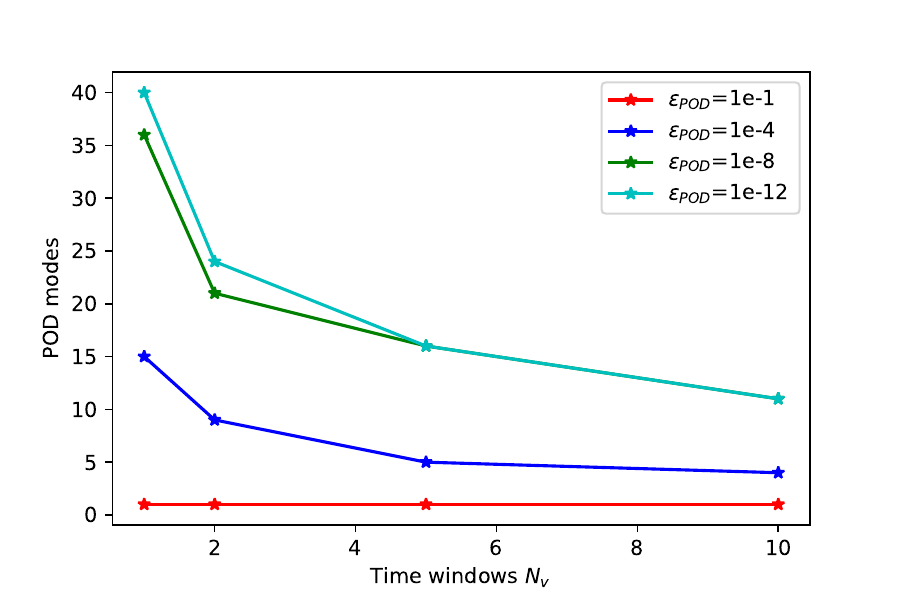}}
    \subfloat[][$N_v$ vs $L^1$-errors (log scale) for $M=5$]{\includegraphics[width=0.49\textwidth]{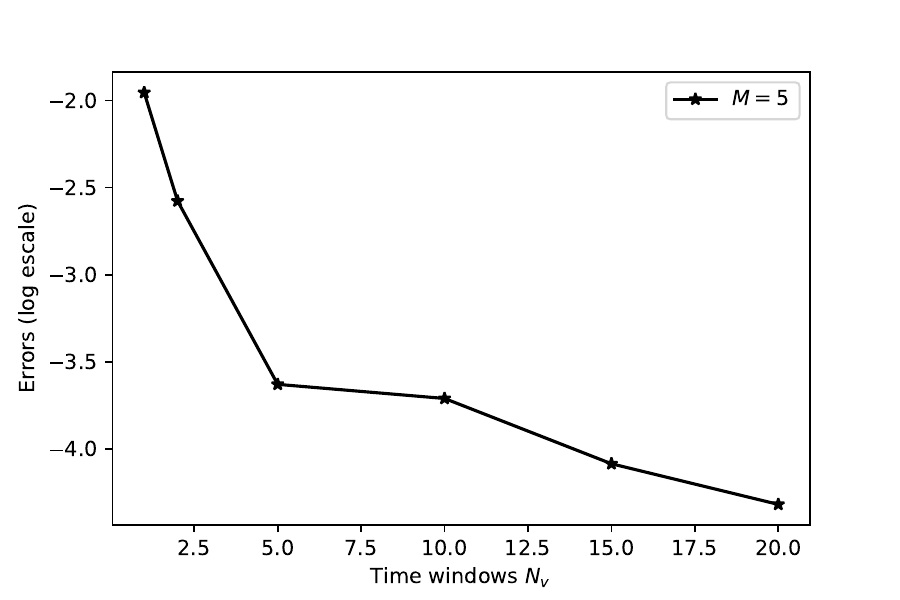}}
     \caption{Test \ref{test_2_2}. Burgers equation: perturbation of a stationary solution. Time windows vs POD modes (left) and Time windows vs $L^1$-errors in logarithmic scale choosing 5 POD modes (right).} 
\label{burgers_pert_Nv-vs-POD}  
  \end{center}
 \end{figure} 
 Since this is a nonlinear system, the use of time windows plays a fundamental role. To reinforce this statement, we have also checked that, if we fix the number of POD modes and increase $N_v$, we obtain better approximations of the solution. Figure \ref{burgers_pert_Nv-vs-POD} (right)  shows the $L^1$-errors in logarithmic scale between the numerical solutions of the FOM and ROM for 6 different values of $N_v$ taking 5 POD modes. \refuno{The CPU time required by the FOM to reach the final solution at $T_f = 3$ s is 0.277 s. Table~\ref{burgers_tab:speedup_fixed_modes} reports the speed-ups obtained with 5 modes and different numbers of time windows $N_v$. These results show that the speed-up remains relatively stable across different values of $N_v$ when the number of modes is fixed.}
 In Figure \ref{burgers_pert_soluciones_5modos} the solutions obtained with the ROM for these values of $N_v$ are plotted and compared with the solution of the FOM (in black).


\refuno{
\begin{table}[h]
\centering
\begin{tabular}{|c|c|c|}
\hline
$N_v$ & ROM Time & Speed-up \\
\hline
1  & 0.158 & 1.75 \\
2  & 0.162 & 1.71 \\
5  & 0.167 & 1.66 \\
10 & 0.164 & 1.69 \\
15 & 0.166 & 1.67 \\
20 & 0.167 & 1.66 \\
\hline
\end{tabular}
\caption{\refuno{Test \ref{test_2_2}. Burgers equation: perturbation of a stationary solution. Speed-ups obtained by the ROM using 5 modes for different numbers of time windows $N_v$.}}
\label{burgers_tab:speedup_fixed_modes}
\end{table}
}

 \begin{figure}[h]
 \begin{center}
   \subfloat[][Solutions with different $N_v$]{\includegraphics[width=0.49\textwidth]{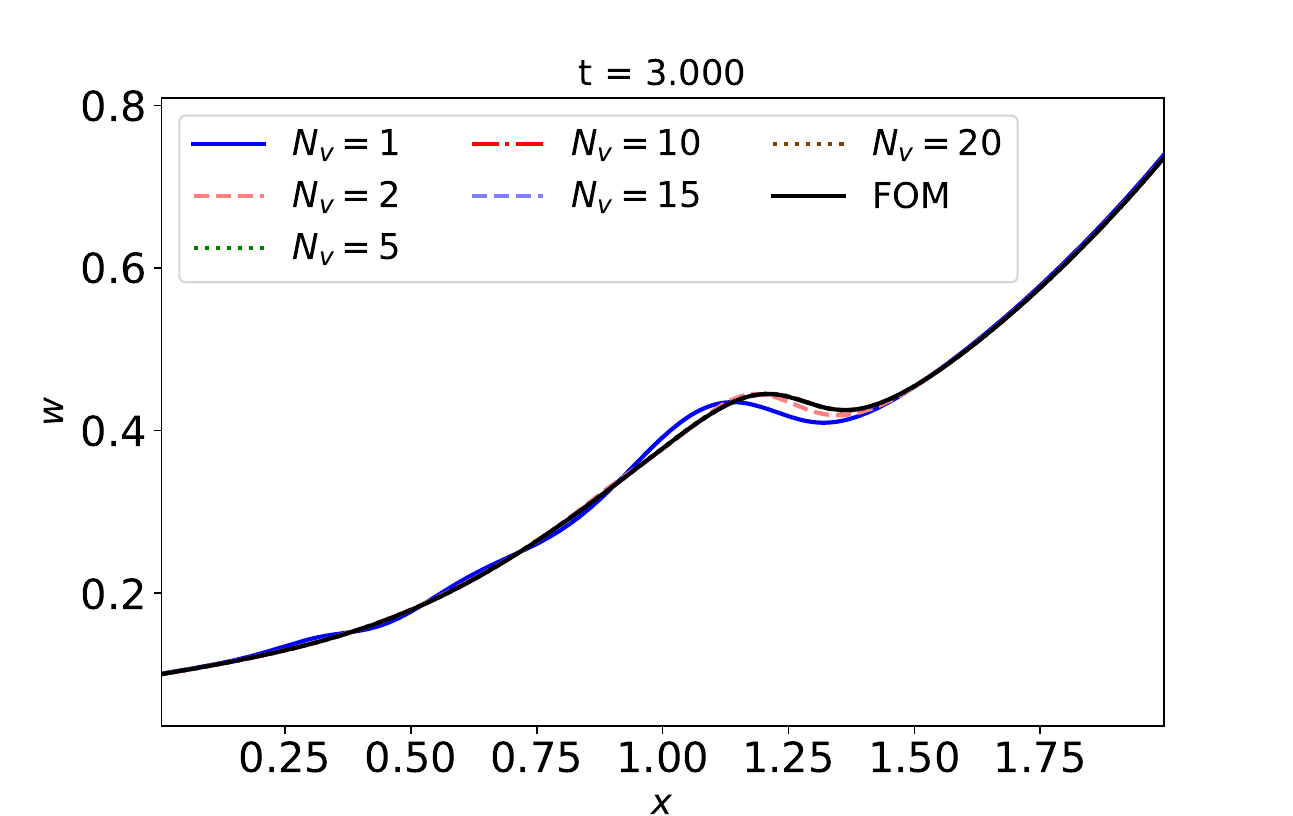}}
    \subfloat[][Solutions with different $N_v$: zoom]{\includegraphics[width=0.49\textwidth]{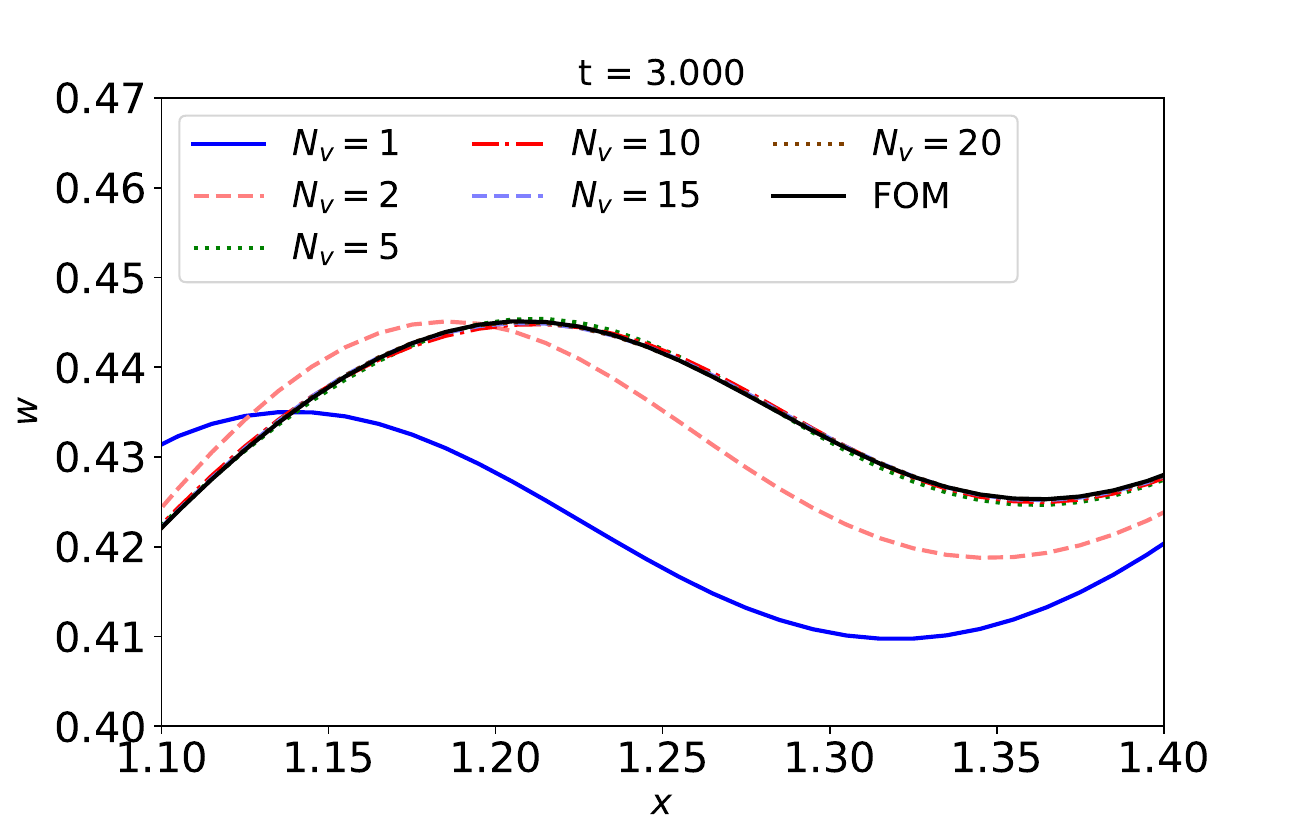}}
     \caption{Test \ref{test_2_2}. Burgers equation: perturbation of a stationary solution. Numerical solutions at $T_f=3$ s. for different time windows choosing 5 POD modes (left) and zoom (right).} 
\label{burgers_pert_soluciones_5modos} 
  \end{center}
 \end{figure} 

\subsection{Shallow water equations: dam-break problem} \label{test_3}
 We consider the SW system with $n_b=0.1$. The space interval  is $[0,12]$ and the time interval is $[0,1]$.  A mesh with 200 cells is considered. The initial condition is a dam-break with a mild slope:
    \begin{equation}\label{dambreak_cini}
    h_0(x)= \left\{ \begin{array}{lcc} 2-z(x) & if & 0 \leq x \leq 6 \\ \\ 1-z(x) & if & 6 < x \leq 12 \end{array} \right., \quad u_0(x)=0,
     \end{equation}
    where
    \begin{equation} \label{fondo_dambreak}
        z(x)=0.2 \left(1-x/12 \right).
    \end{equation}
    
The number of time windows is $N_v=5$ and $\varepsilon_{\text{POD}}=$1e-10. Our goal is to compare the time-averaging and the DEIM approaches. Let us first apply the modified Lax-Friedrichs numerical flux. Following Section \ref{sec:SW_LF_ROM}, we can consider different choices to linearise the friction term in order to build the ROM.

   Firstly, we consider time-averaging for $u$ and $f=|q|/h^{7/3}$, i.e., the expression of the friction term is given by \eqref{SW_fric_T-A}-\eqref{SW_fric_T-A_Hp}. With the previous conditions, the selected number of POD modes is $M=21$. Figure \ref{sw_dambreak_TW5_Fric0c1_LF_AllTAv} shows, from left to right, the water height $h$ and the discharge $q$.  Notice that significant spurious oscillations appear near the front shock.

  \begin{figure}[h]
 \begin{center}
   \subfloat[][Water depth]{\includegraphics[width=0.49\textwidth]{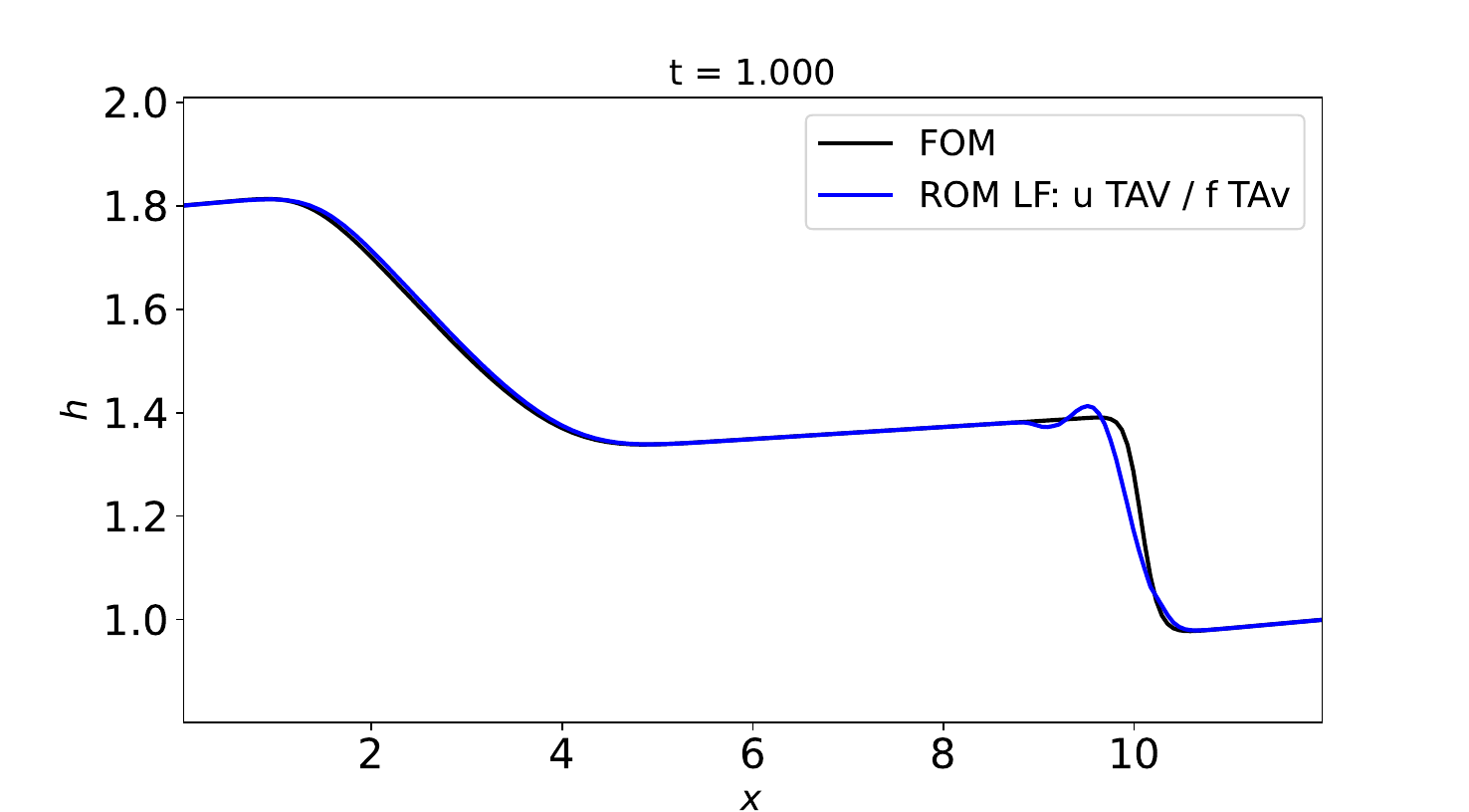}} \vspace{2mm}
   \subfloat[][Discharge]{\includegraphics[width=0.49\textwidth]{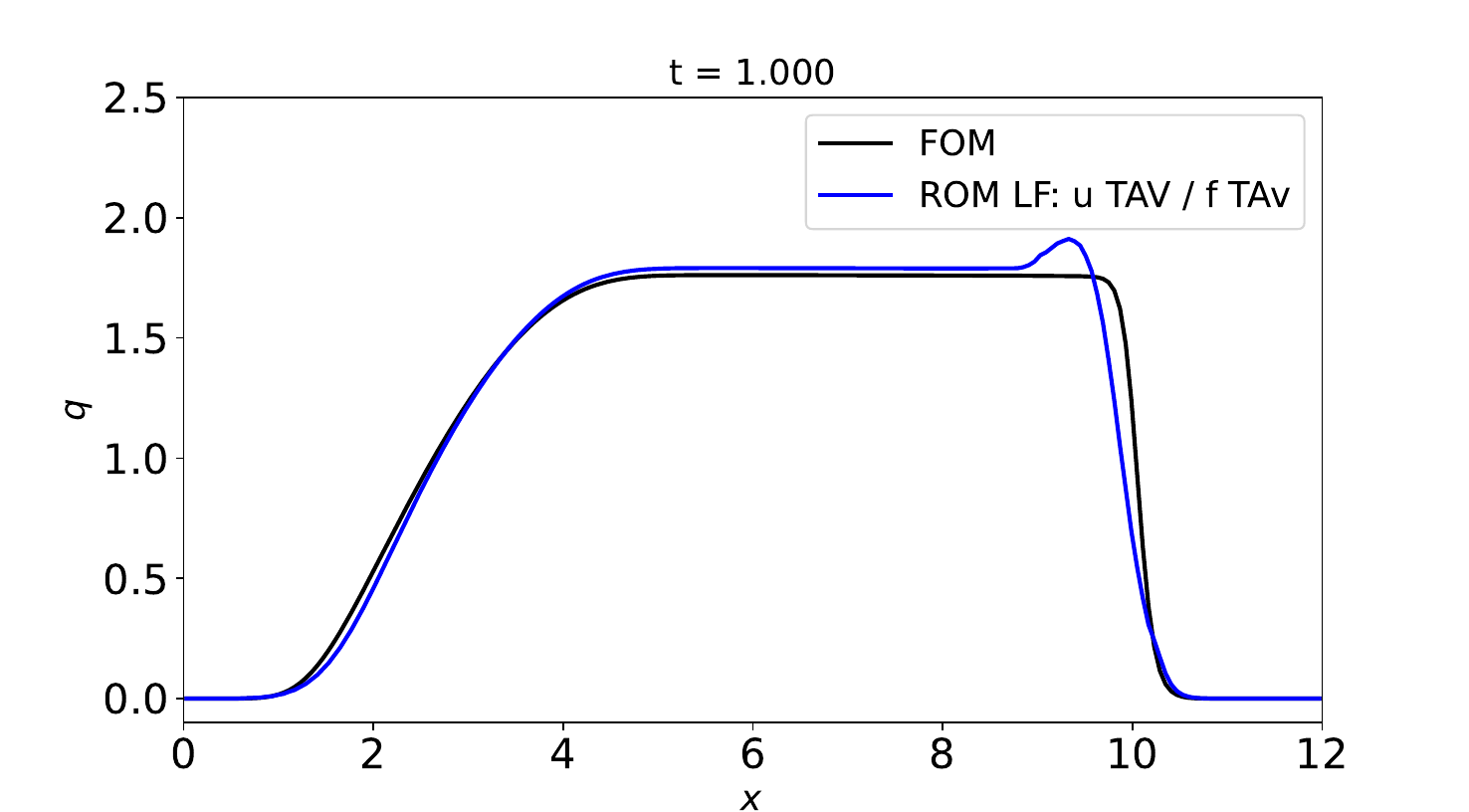}}
    \caption{Test \ref{test_3}. SW equations: dam-break. Lax-Friedrichs numerical flux and time-averaging for $u$ and $f=|q|/h^{7/3}$. Water depth $h$ (left) and discharge $q$ (right) at $T_f=1$.} 
\label{sw_dambreak_TW5_Fric0c1_LF_AllTAv}
  \end{center}
 \end{figure} 

 Secondly, we consider the DEIM approach for $u$ and time-averaging for $f=|q|/h^{7/3}$, leading to the expression \eqref{SW_fric_T-A+DEIM}-\eqref{SW_fric_T-A+DEIM_H} for the friction term in the ROM. Given the previous conditions, the selected number of POD modes is $M=17$. Figure \ref{sw_dambreak_TW5_Fric0c1_LF_uDEIM_fTAv} (up) shows, from left to right, the water depth $h$ of the water and the discharge $q$.  Observe that, for the variable $q$, some spurious oscillations appear near the front shock, although they are significantly smaller than in the previous case: Figure \ref{sw_dambreak_TW5_Fric0c1_LF_uDEIM_fTAv} (down) zooms in near the shock for $q$.
    \begin{figure}[h]
 \begin{center}
   \subfloat[][Water depth]{\includegraphics[width=0.49\textwidth]{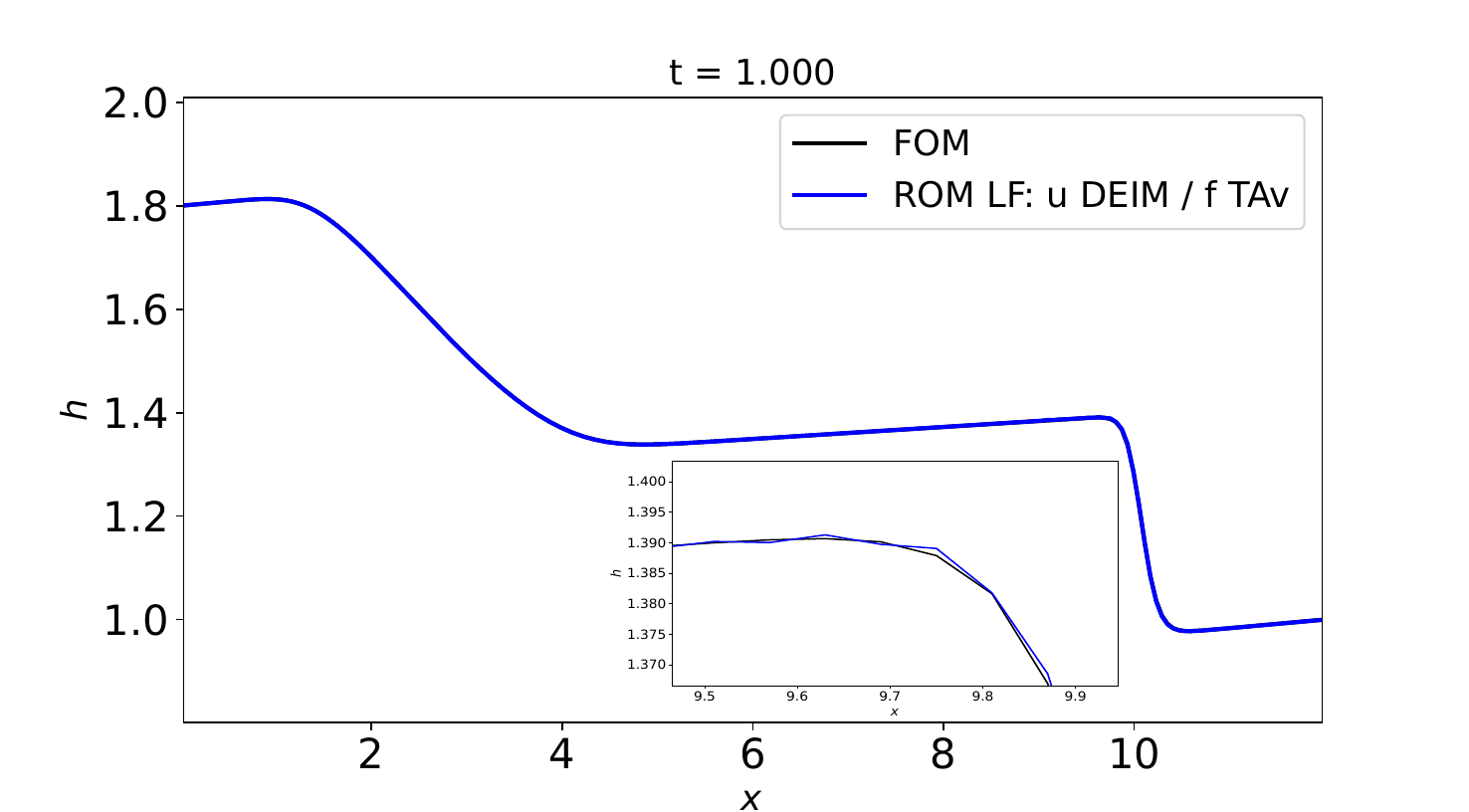}} 
   \subfloat[][Discharge ]{\includegraphics[width=0.49\textwidth]{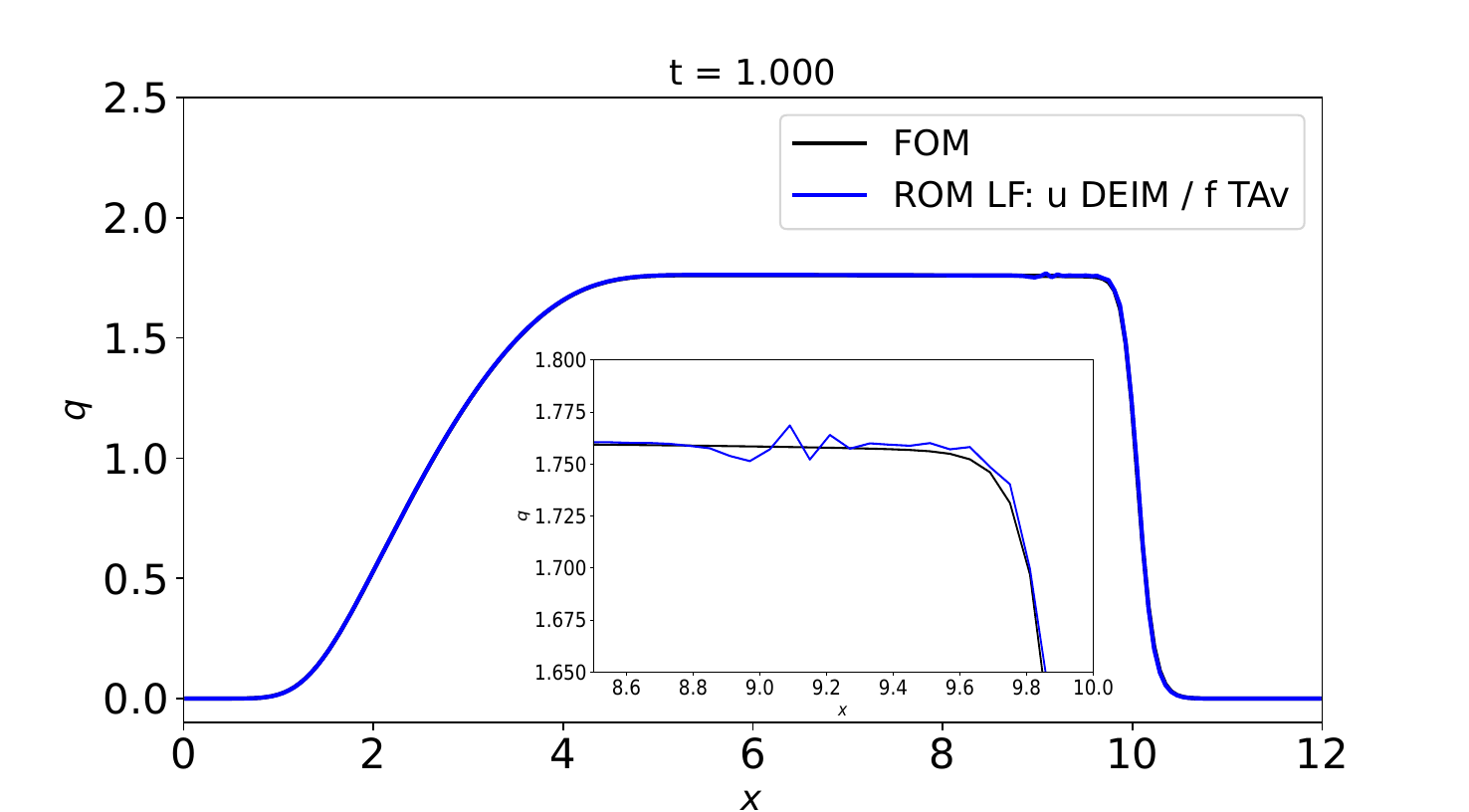}}
    \caption{Test \ref{test_3}. SW equations: dam-break. Lax-Friedrichs numerical flux, DEIM approach for $u$ and time-averaging for $f=|q|/h^{7/3}$.  Water depth $h$ (up) and discharge $q$ (right) at $T_f=1$ s. Inner figures: zooms.} 
\label{sw_dambreak_TW5_Fric0c1_LF_uDEIM_fTAv}
  \end{center}
 \end{figure}

 

Finally, let us consider DEIM for $u$ and $f=|q|/h^{7/3}$, which gives the expression \eqref{fROM_galerkindecomp}-\eqref{SW_fric_DEIM}-\eqref{SW_fric_DEIM_H} of the friction term in the ROM.  Given the previous choices, the selected number of POD modes is $M=20$. Figure \ref{SW_dambreak_TW5_Fric0c1_LF_uDEIM_fDEIM} (up) shows, from left to right, the variables $h$ and $q$.  Note that the spurious oscillations which appear near the front shock in the previous case are no longer in this plot, unlike the previous cases. Figure \ref{SW_dambreak_TW5_Fric0c1_LF_uDEIM_fDEIM} (down) zooms in near the shock for $q$.

   Moreover, errors in $L^1-$norm between the solution of the ROM and a numerical solution computed with the FOM with the parameter $n_b=0.1$ are shown in Table \ref{sw_dambreak_TW5_Fric0c1_LF_tabla} for these three possible choice for the linearization of $u$ and $f$. We can observe that errors are reduced when  using the DEIM approach. 
    \begin{figure}[h]
 \begin{center}
   \subfloat[][Water depth]{\includegraphics[width=0.49\textwidth]{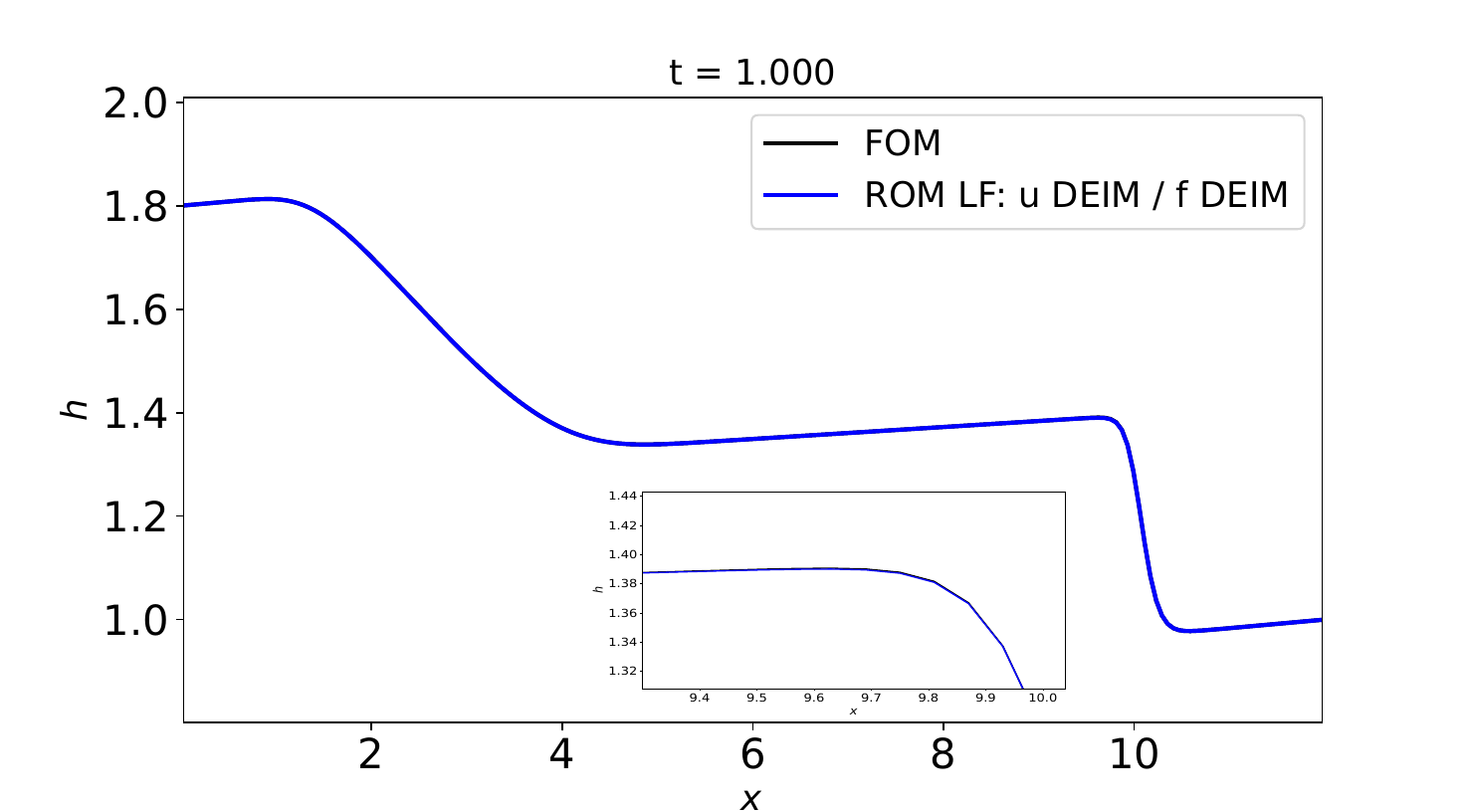}} 
   \subfloat[][Discharge]{\includegraphics[width=0.49\textwidth]{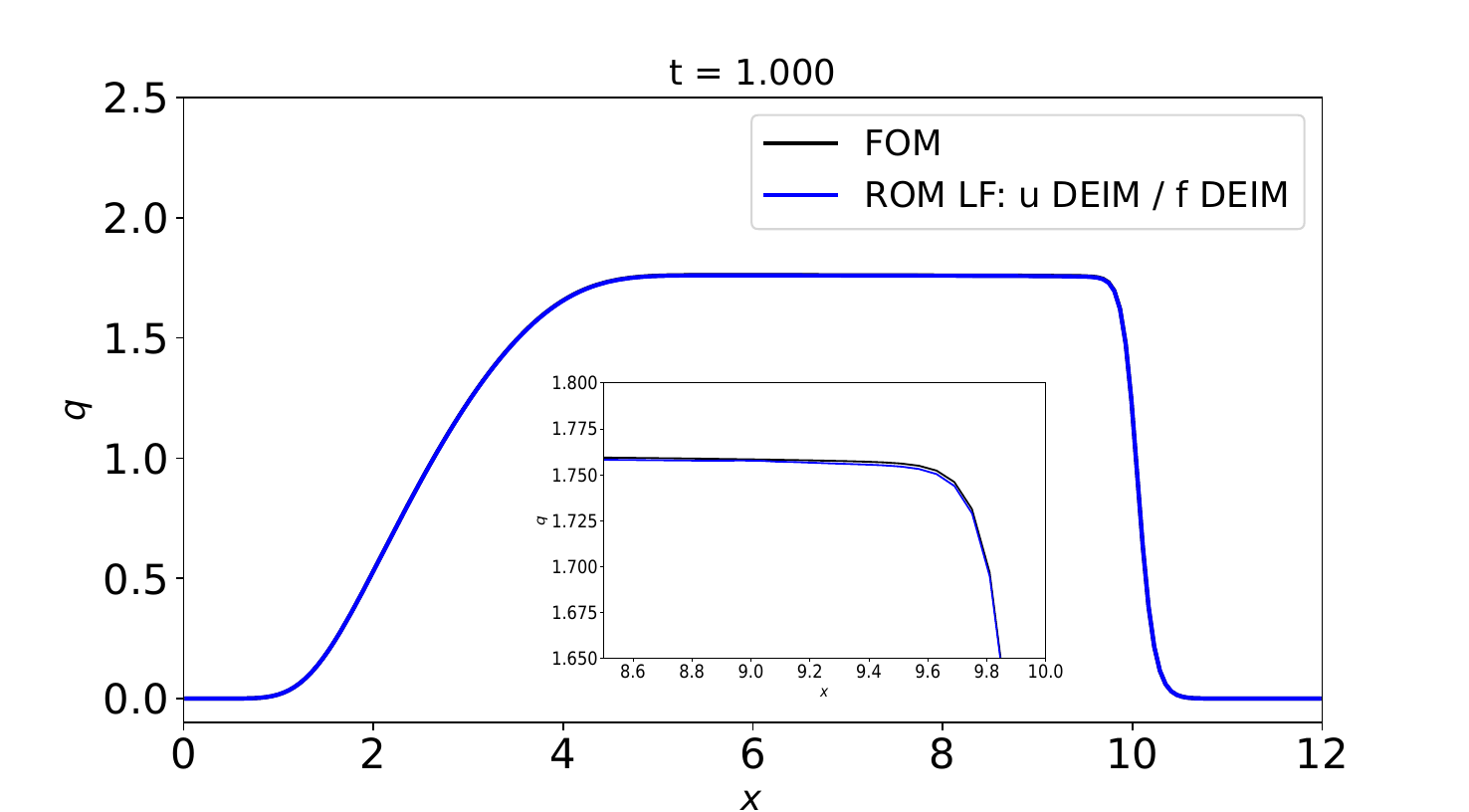}} \\ 
  \caption{Test \ref{test_3}. SW equations: dam-break. Lax-Friedrichs numerical flux, DEIM approach for $u$ and $f=|q|/h^{7/3}$.  Water depth $h$ (left), discharge $q$ (right) at $T_f=1$ s. Inner figures: zooms.} \label{SW_dambreak_TW5_Fric0c1_LF_uDEIM_fDEIM}
  \end{center}
 \end{figure}

 

\medskip 

Let us now consider the HLL numerical flux. As a result of the above findings, the friction term is linearised using the DEIM approach for $u$ and $f=|q|/h^{7/3}$ as in the last of the previous cases. In order to verify that the DEIM strategy does indeed give better results than the time-averaging technique, these two approaches have been applied now to update the coefficients of the diffusive term of the numerical flux $\alpha_{i+1/2}^{0,n}$ and $\alpha_{i+1/2}^{1,n}$, (see \eqref{FOM_swe_LF_h}-\eqref{FOM_swe_LF_q}).

 First, let us consider DEIM for $u$ and $f=|q|/h^{7/3}$ and the time-averaging approach for the coefficients $\alpha_{i+1/2}^{0,n}$ and $\alpha_{i+1/2}^{1,n}$. Considering the previous choices, the selected number of POD modes is $M=20$. Figure \ref{SW_dambreak_TW5_Fric0c1_HLL_uDEIM_fDEIM_coefTAv} (up) shows, from left to right, the height of the water $h$ and the discharge $q$. Again, spurious oscillations appear near the front shock. Figure \ref{SW_dambreak_TW5_Fric0c1_HLL_uDEIM_fDEIM_coefTAv} (down) zooms in near the shock for $q$. 

     \begin{figure}[h]
 \begin{center}
   \subfloat[][ Water depth]{\includegraphics[width=0.49\textwidth]{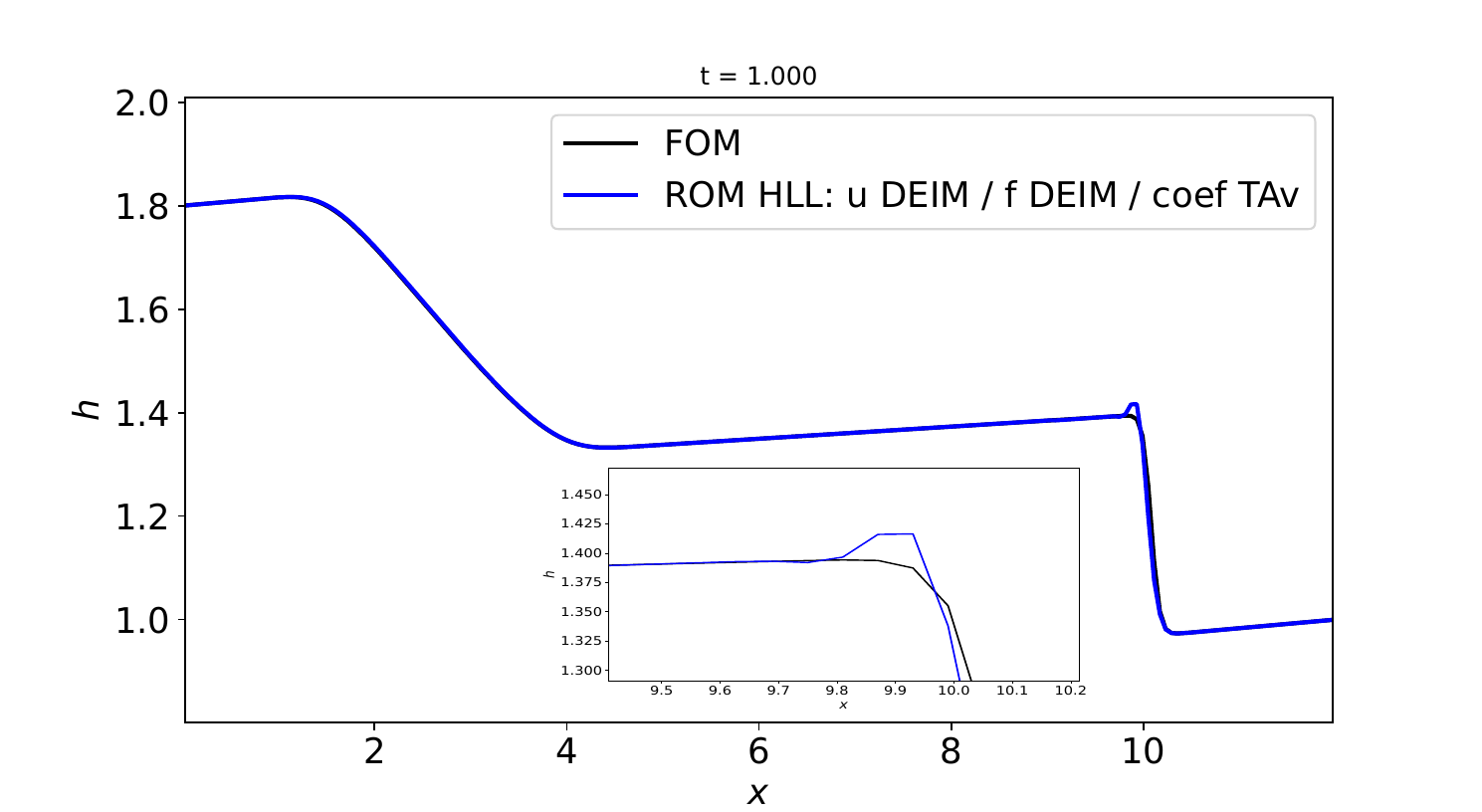}} 
   \subfloat[][ Discharge]{\includegraphics[width=0.49\textwidth]{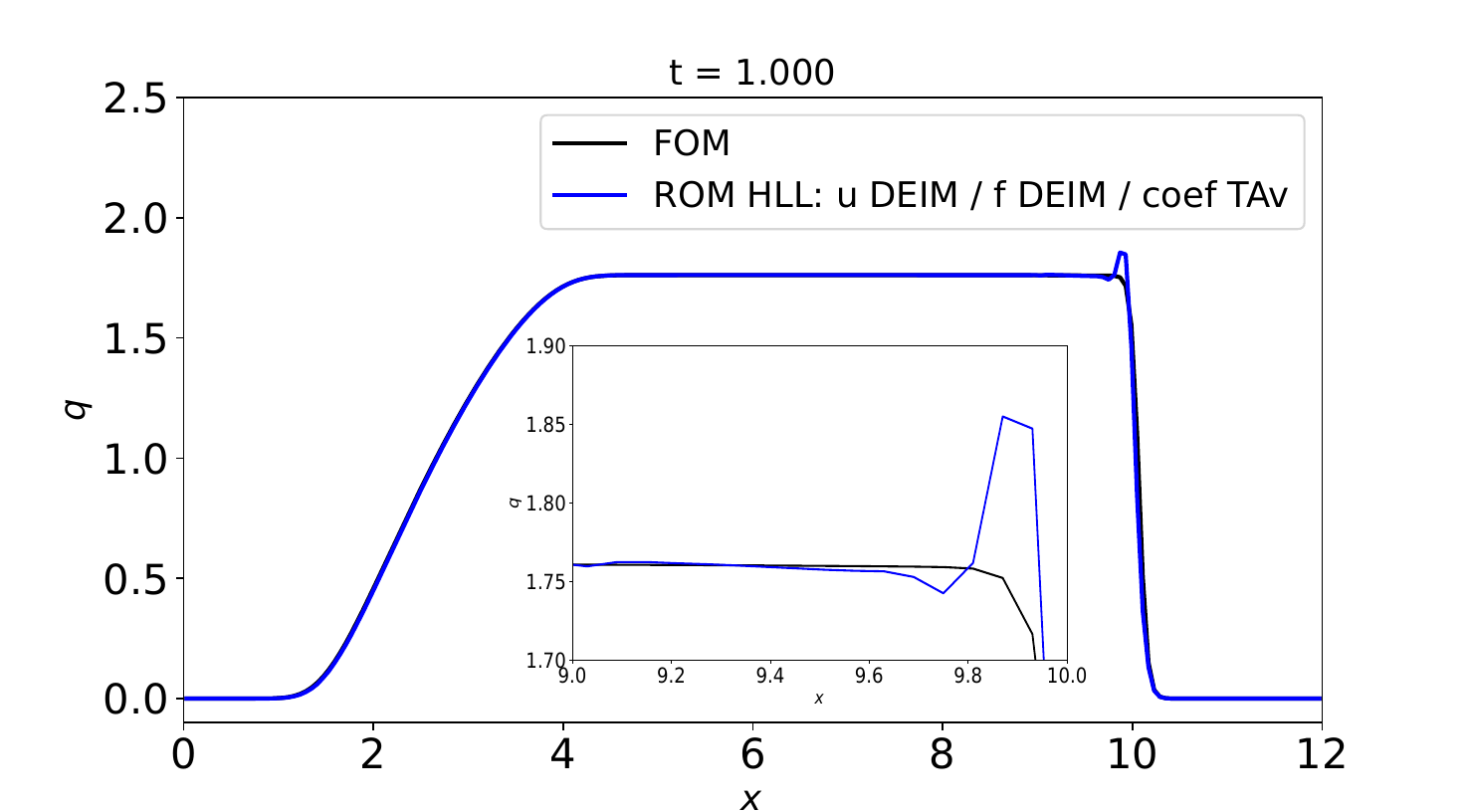}} 
      \caption{Test \ref{test_3}. SW equations: dam-break. HLL numerical flux, DEIM approach for $u$ and $f=|q|/h^{7/3}$ and the time-averaging approach for the coefficients $\alpha_{i+1/2}^{s,n}, \, s=1,2$.  Water depth $h$ (left), discharge $q$ (right) at $T_f=1$ s. Inner figures: zooms.}\label{SW_dambreak_TW5_Fric0c1_HLL_uDEIM_fDEIM_coefTAv}
  \end{center}
 \end{figure}



  Lastly,  the DEIM approach is considered for $u$,  $f=|q|/h^{7/3}$ and the coefficients $\alpha_{i+1/2}^{0,n}$ and $\alpha_{i+1/2}^{1,n}$. Considering the previous choices, the selected number of POD modes is again $M=20$. Figure \ref{SW_dambreak_TW5_Fric0c1_HLL_uDEIM_fDEIM_coefDEIM} (up) shows, from left to right, the height of the water $h$ and the discharge $q$. Spurious oscillations are reduced using DEIM near the front shock. Figure \ref{SW_dambreak_TW5_Fric0c1_HLL_uDEIM_fDEIM_coefDEIM} (down) zooms in near the shock for $q$. 
 \begin{figure}[h]
 \begin{center}
   \subfloat[][ Water depth]{\includegraphics[width=0.49\textwidth]{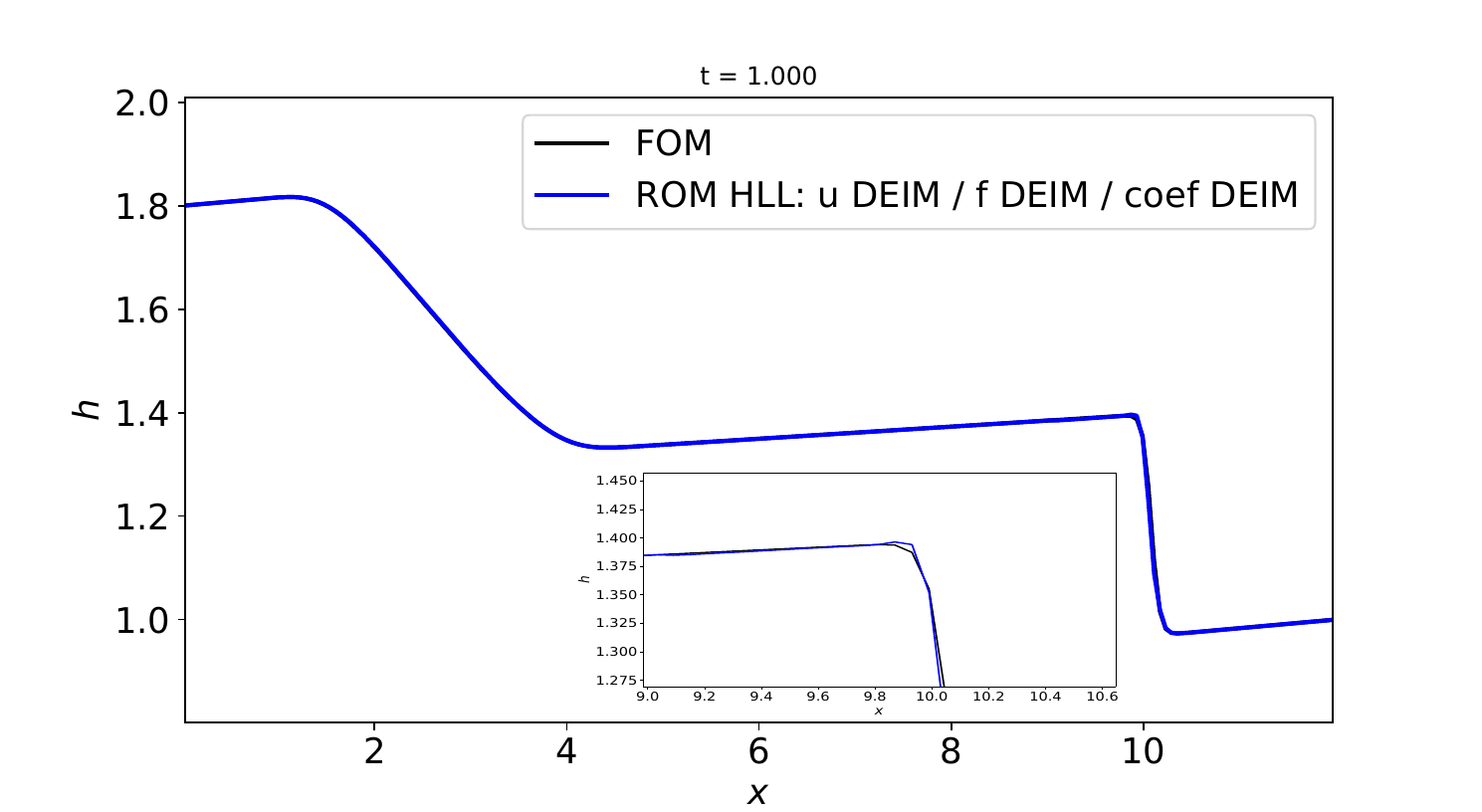}}
   \subfloat[][Discharge]{\includegraphics[width=0.49\textwidth]{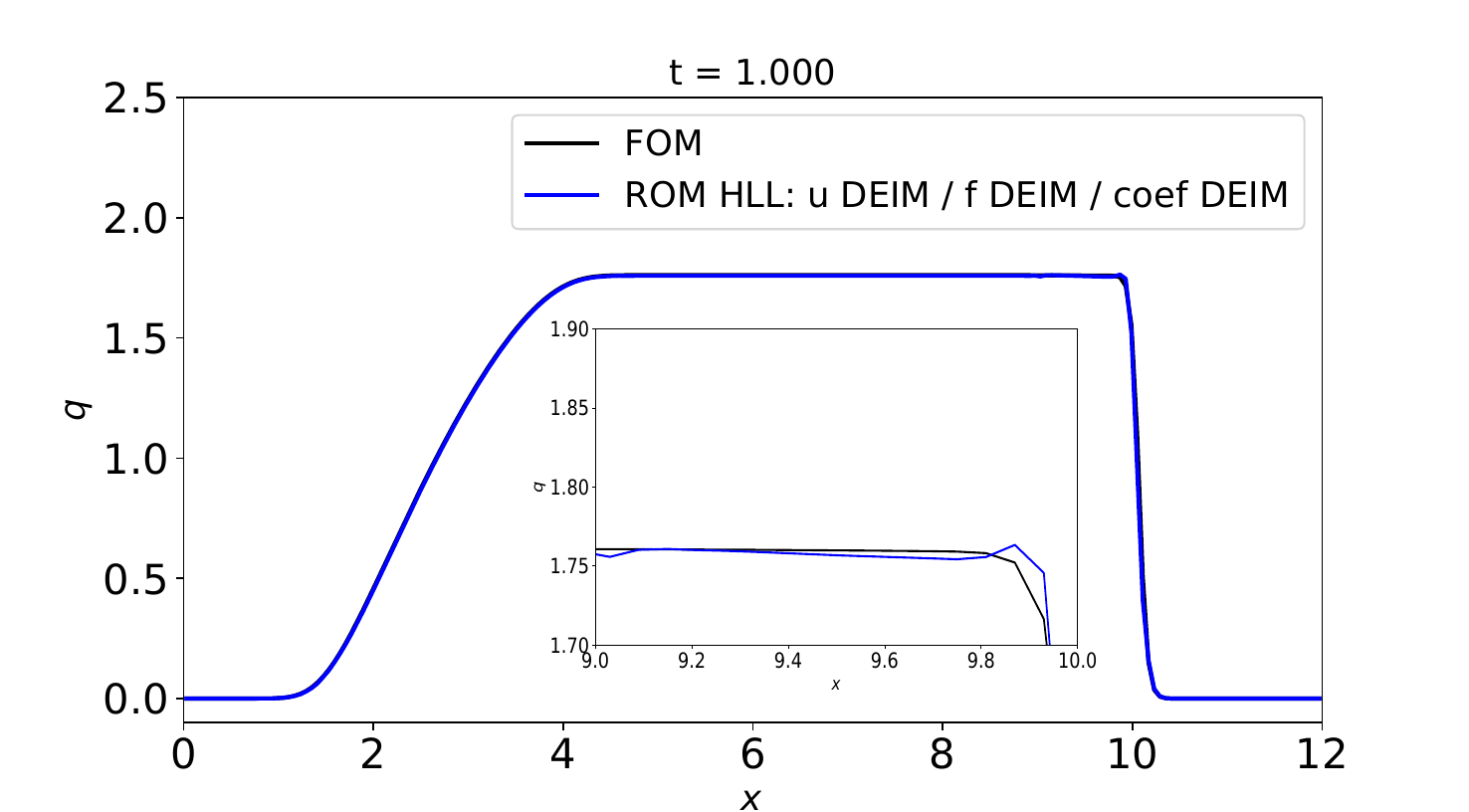}} 
    \caption{Test \ref{test_3}. SW equations: dam-break. HLL numerical flux, DEIM approach for $u$, $f=|q|/h^{7/3}$ and the coefficients $\alpha_{i+1/2}^{s,n}, \, s=1,2$.   Water depth $h$ (left), discharge $q$ (right) at $T_f=1$ s. Inner figures: zooms.}\label{SW_dambreak_TW5_Fric0c1_HLL_uDEIM_fDEIM_coefDEIM}
  \end{center}
 \end{figure}



Errors in $L^1-$norm between the solution of the ROM and a numerical solution computed with the FOM with the parameter $n_b=0.1$ are shown in Table \ref{sw_dambreak_TW5_Fric0c1_HLL_tabla}. It can be observed that by considering DEIM linearization errors are reduced. 
\begin{table}[h]
	\centering
	\begin{tabular}{|c|c|c|c|}
		\hline 
		Lax-Friedrichs & \multicolumn{3}{c|}{Linearization} 
		\\		\hline 
		L$^1$ error   &   
		\begin{tabular}{l}
			$u$ time-averaging \\ $f$ time-averaging 
		\end{tabular} 
		&
		\begin{tabular}{l}
			$u$ DEIM \\ $f$ time-averaging 
		\end{tabular}   
		&
		\begin{tabular}{l}
			$u$ DEIM \\ $f$ DEIM 
		\end{tabular}    
		\\ \hline 
		h &   8.13e-02  &   1.85e-03 & 9.48e-04  \\ \hline 
		q &   5.25e-01  &   1.81e-02 & 9.47e-03
		\\ \hline
	\end{tabular}
	\caption{Test \ref{test_3}. SW equations: dam-break, Lax-Friedrichs numerical flux.  Errors in $L^1-$norm between the solution of the ROM and a numerical solution computed with the FOM with the parameter $n_b=0.1$ for $h$ and $q$ at $T_f=1$ s. by considering time-averaging or DEIM linearization for $u$ or $f=|q|/h^{7/3}$.}
\label{sw_dambreak_TW5_Fric0c1_LF_tabla}
\end{table}

\begin{table}[h]
	\centering
	\begin{tabular}{|c|c|c|}
		\hline 
		HLL & \multicolumn{2}{c|}{Linearization} 
		\\		\hline 
		L$^1$ error  &   
			\begin{tabular}{lc} 
				$u$ DEIM, & ($\alpha_{i+1/2}^{0,n}$,  \, $\alpha_{i+1/2}^{1,n}$) \\
				$f$ DEIM, &   time averaging
			\end{tabular} 
		&
			\begin{tabular}{lc} 
	$u$ DEIM, & ($\alpha_{i+1/2}^{0,n}$,  \, $\alpha_{i+1/2}^{1,n}$) \\
	$f$ DEIM, &   DEIM
\end{tabular}    
		\\ \hline 
		h &   1.52e-02  &  8.33e-03    \\ \hline 
		q &   6.55e-02  & 4.80e-02
		\\ \hline
	\end{tabular}
	\caption{Test \ref{test_3}. SW equations: dam-break, HLL numerical flux.  Errors in $L^1-$norm between the solution of the ROM and a numerical solution computed with the FOM with the parameter $n_b=0.1$ for $h$ and $q$ at $T_f=1$ s. by considering time-averaging or DEIM linearization for $\alpha_{i+1/2}^{0,n}$ and $\alpha_{i+1/2}^{1,n}$ and DEIM for $u$ and $f=|q|/h^{7/3}$.}
	\label{sw_dambreak_TW5_Fric0c1_HLL_tabla}
\end{table}


This experiments shows that the DEIM approach introduced in this work produces better results in the presence of discontinuities than other linearization techniques previously described in the literature such as time-averaging.

\refuno{Table~\ref{sw_dambreak_tab:rom_speedups_lf_hll} summarizes the CPU times and speed-ups for different ROM configurations. For Lax-Friedrichs, full DEIM yields the best accuracy, while, as expected, the full time-averaging configuration is fastest. The mixed approach balances both. A similar pattern appears with HLL: applying DEIM to $u$ and $f$ and time-averaging the coefficients $\left(\alpha_{i+1/2}^{0,n}, \alpha_{i+1/2}^{1,n}\right)$ achieves the highest speed-up, whereas full DEIM improves accuracy slightly. Overall, the results suggest that combining DEIM and time-averaging offers the best trade-off between precision and performance.
}

\begin{table}[h]
\centering
\resizebox{\textwidth}{!}{%
\begin{tabular}{|c|c|c|c|c|c|}
\hline
Scheme & $u$ & $f$ & $\left(\alpha_{i+1/2}^{0,n}, \, \alpha_{i+1/2}^{1,n}\right)$ & \multicolumn{1}{c|}{\begin{tabular}{@{}c@{}}ROM\\CPU time \end{tabular}} & Speed-up \\
\hline
\multirow{3}{*}{Lax-Friedrichs} 
& TAv & TAv & -- & 0.212 & 1.73 \\
& DEIM & TAv & -- & 0.215 & 1.71 \\
& DEIM & DEIM & -- & 0.224 & 1.64 \\
\hline
\multirow{2}{*}{HLL} 
& DEIM & DEIM & TAv & 0.251 & 2.88 \\
& DEIM & DEIM & DEIM & 0.377 & 1.92 \\
\hline
\end{tabular}
}
\caption{\refuno{Test \ref{test_3}. SW equations: dam-break. CPU times and speed-ups for different ROM configurations using the Lax-Friedrichs and HLL schemes. TAv stands for time-averaging. The FOM CPU times are 0.367 s for Lax-Friedrichs and 0.723 s for HLL.}}
\label{sw_dambreak_tab:rom_speedups_lf_hll}
\end{table}

\subsection{Simulation in parameter-dependent systems} \label{test_4}
We consider again the SW system with friction \eqref{swf_ecuacion}. The space interval is $[0,12]$ and the time interval is $[0,1]$. A mesh with 200 cells is taken. Again, the initial condition is the dam-break given by \eqref{dambreak_cini}-\eqref{fondo_dambreak}. Our goal is to build a predictive ROM following Section \ref{sec:prediction}, where the parameter to be predicted is the friction coefficient $n_b$. We apply the modified Lax-Friedrichs  numerical flux with the linearization \eqref{SW_fric_DEIM}-\eqref{SW_fric_DEIM_H} of the friction term. \refuno{It is worth noting that, in this experiment, the cost of constructing the reduced bases from the snapshot matrices (formed by assembling submatrices corresponding to different values of the parameter in the training set) is performed during the offline phase and therefore does not contribute to the computational cost. As a result, the speed-up achieved in this case is expected to be of the same order as that obtained in the previous test, considering the Lax-Friedrichs scheme and the same linearization of $u$ and the friction term.
}
Let's consider the Manning's friction coefficient as $n_b=0.035$. Before building predictive ROMs, we have first evaluated the errors obtained by using the FOM for this value and then deriving the associated ROM. We consider $N_v=25$ time windows and $\varepsilon_{\text{POD}}=$1e-10. Table \ref{swparametrofric_nb0c035_tabla_minimoerror} presents the $L^1-$norm errors when projecting onto the entire space (maximum number of modes) and when using only 5 modes. In both cases, the errors are of the order of $1e-4$. 

\begin{table}[h]
     \centering
     \begin{tabular}{|c|c|c|}
     \hline
     & Projection & 5 POD modes \\ \hline
         $h$ & 1.27e-04 & 1.70e-04 \\
          $q$ & 3.86e-04 & 7.14e-04 \\ \hline
     \end{tabular}
     \caption{Test \ref{test_4}. SW equations with friction. Errors in $L^1-$norm between numerical solution computed with the FOM with the parameter $n_b=0.035$ and the solution of the ROM with the same parameter when projecting onto the entire space (maximum number of modes) and when using only 5 modes at $T_f=1$ s.}
\label{swparametrofric_nb0c035_tabla_minimoerror}
 \end{table}

Now, we have applied the procedure introduced in Section \ref{sec:prediction} to construct different ROMs using bases derived from 4 distinct training sets, ensuring that none of the sets includes parameter $n_b=0.035$. The training sets, labeled as $C_1$, $C_2$, $C_3$ and $C_4$, are the following:
$$
 C_1  = \{0, 1 \}, \,\,
C_2  = \{0.01, 0.05, 0.09 \}, \,\,
C_3  = \{0.03, 0.04 \}, \,\,
C_4  = \{0.07, 0.09 \}. 
$$

Figures \ref{swparametrofric_nb0c035_h} , \ref{swparametrofric_nb0c035_q} and \ref{swparametrofric_nb0c035_dif} display the solutions obtained from the 4 predictive ROMs and the absolute value of the differences between the FOM numerical solution for $n_b=0.035$ and the ROMs for the training sets $C_k, \, k=1, 2, 3, 4$. The $L^1-$errors between the FOM solution and the ones computed with the four ROMs are summarized in Table \ref{swparametrofric_nb0c035_tabla_Ck}. 

Notice that when using $C_1$, which contains extreme values of the friction parameter, we obtain a good approximation of the solution. Furthermore, as expected, the error decreases when the training sets are based on more information around the parameter being predicted $n_b=0.035$, as seen with sets $C_2$ and $C_3$. In fact, with the latter, the error decreases by an entire order of magnitude. Finally, when considering training set $C_4$, which is distant from the parameter being predicted, we observe that although the error increases, the solutions still provide reasonably accurate approximations: the predicted ROM matches well with the FOM considering the exact Manning friction parameter $n_b=0.035$, obtaining small errors. Therefore, in view of the results, the technique for designing ROMs that allows us to make parameter predictions described in Section \ref{sec:prediction} seems to provide accurate approximations in the considered framework.

\begin{figure}[h]
 \begin{center}
   \subfloat[][Water depth]{\includegraphics[width=0.45\textwidth]{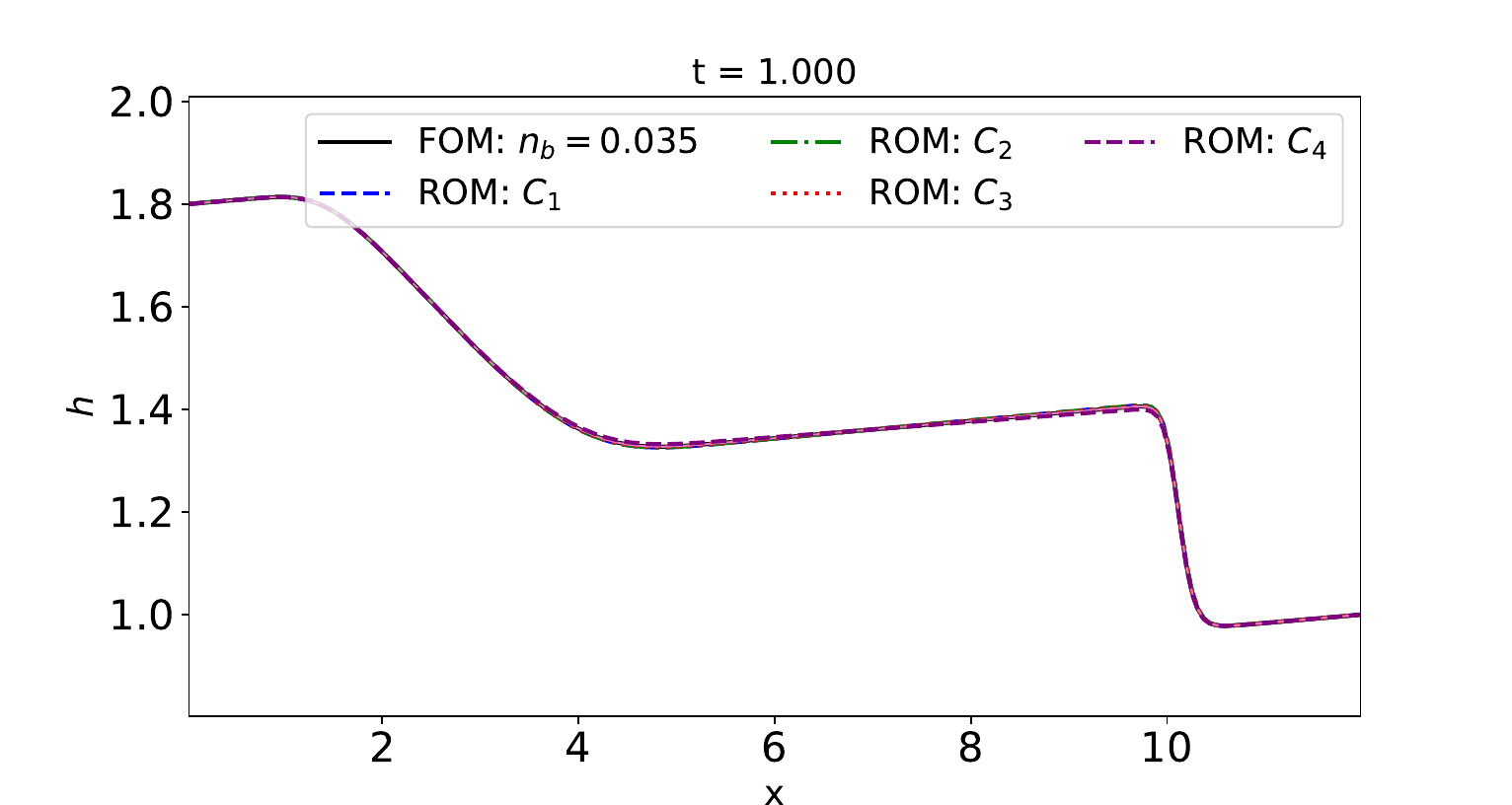}} \vspace{2mm}
   \subfloat[][Water depth: zoom ]{\includegraphics[width=0.45\textwidth]{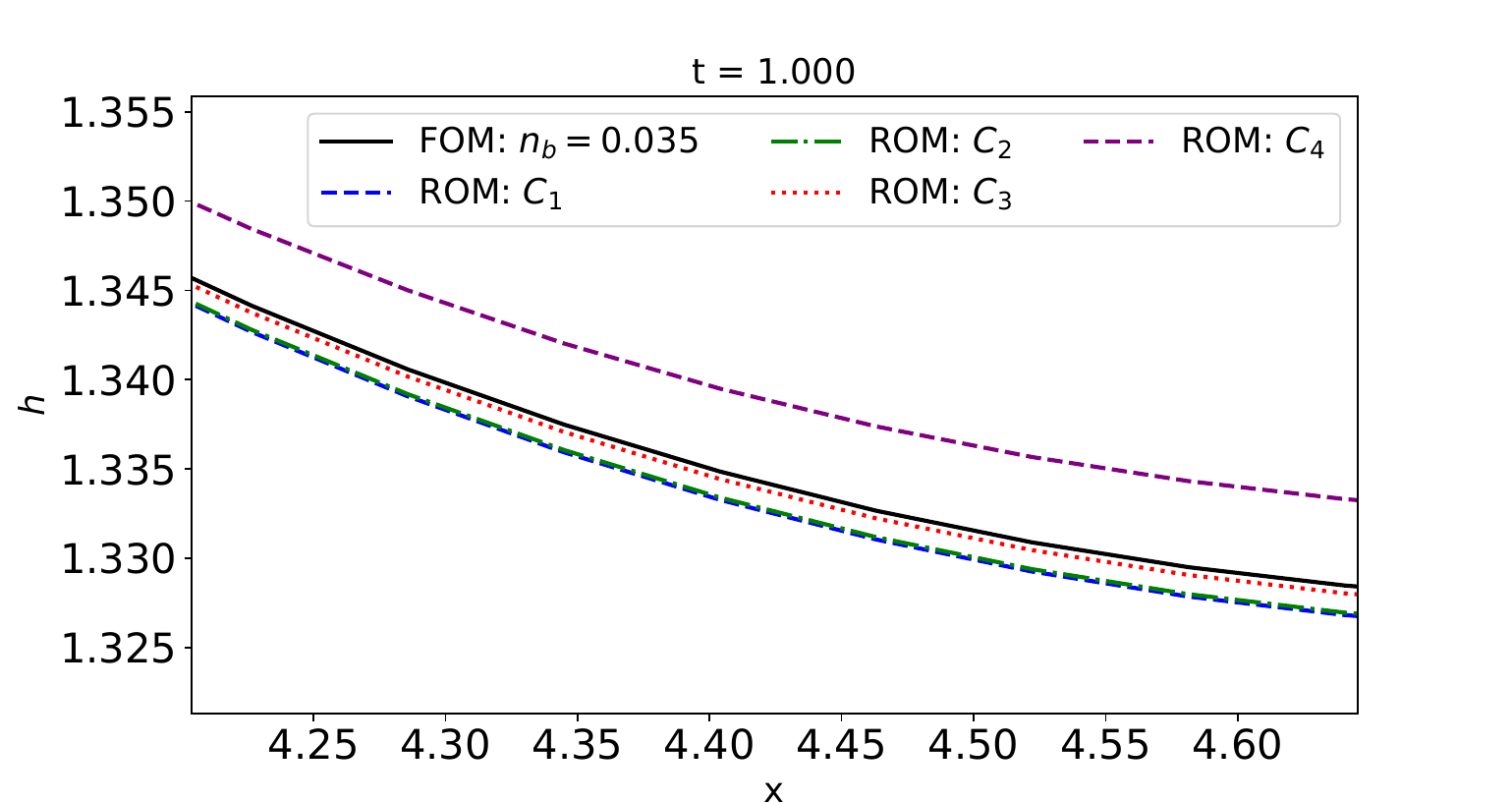}}
     \caption{Test \ref{test_4}. Predictive ROMs for the SW equations with friction. Prediction for $n_b=0.035$ using the training sets $C_k, \, k=1,2,3,4$. Water depth $h$ (left) and zoom in (right) at $T_f=1$ s.} 
\label{swparametrofric_nb0c035_h}
  \end{center}
 \end{figure}

 \begin{figure}[h]
 \begin{center}
   \subfloat[][Water depth]{\includegraphics[width=0.45\textwidth]{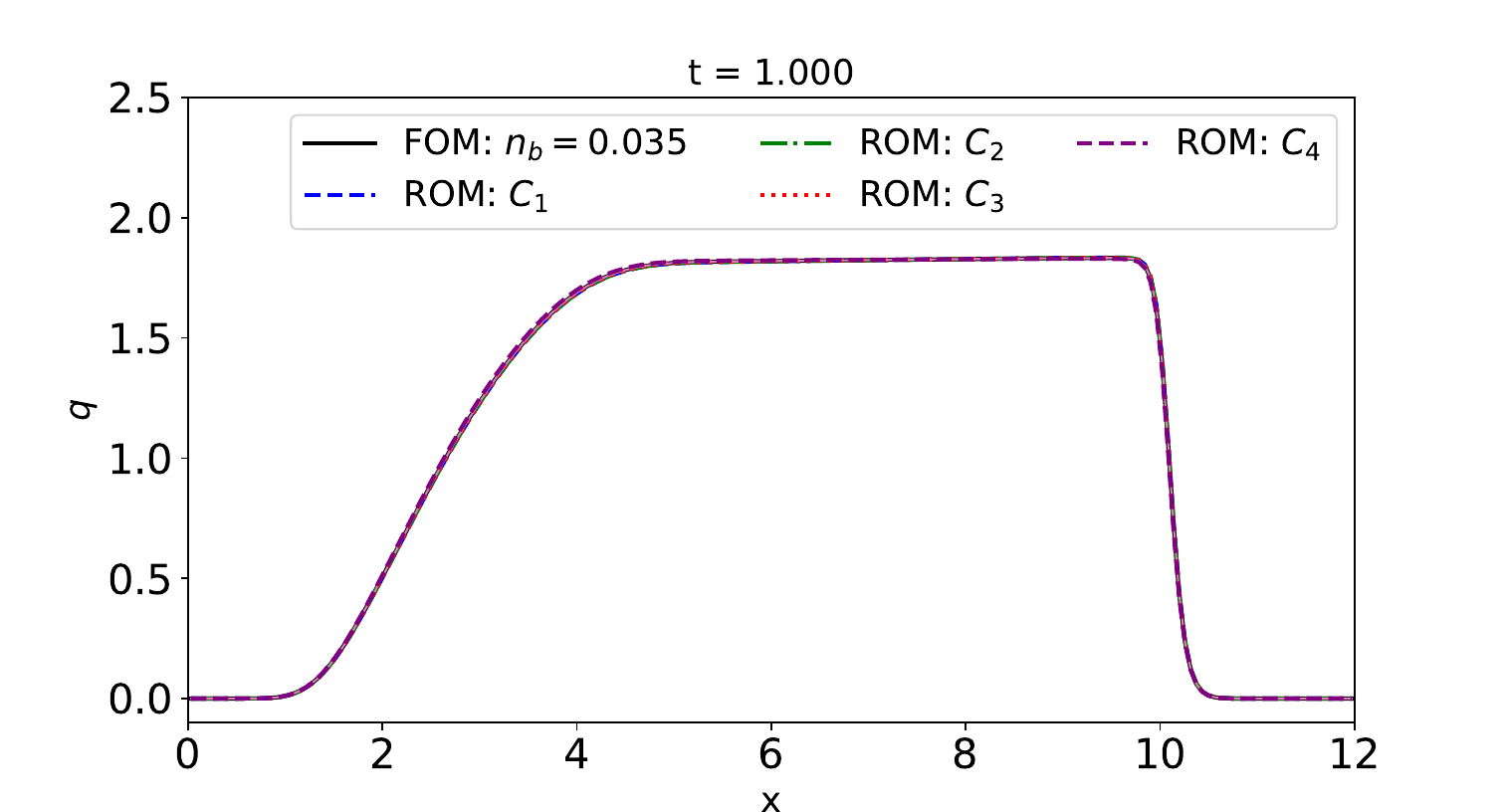}} \vspace{2mm}
   \subfloat[][Water depth: zoom]{\includegraphics[width=0.45\textwidth]{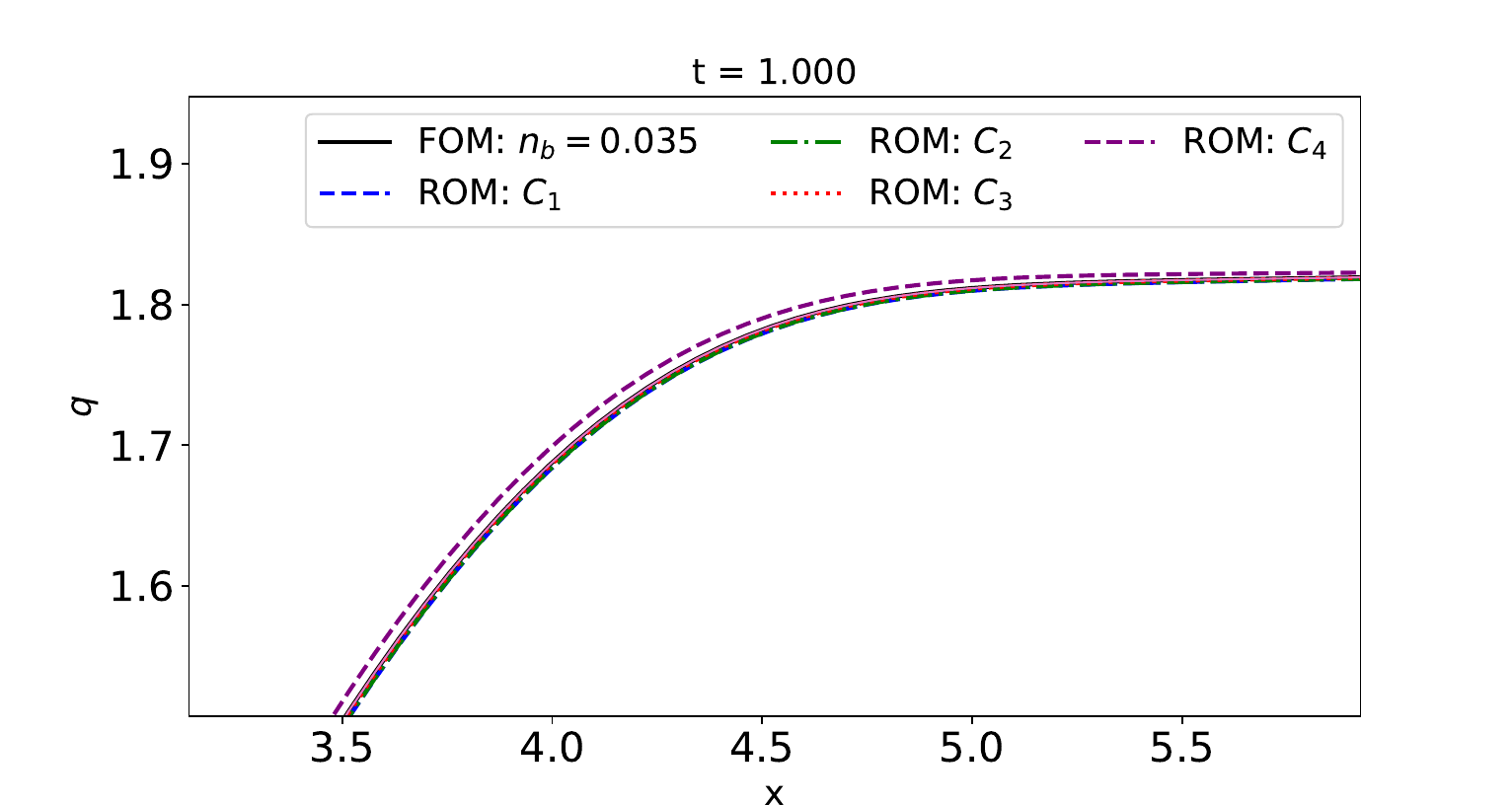}}
     \caption{Test \ref{test_4}. Predictive ROMs for the SW equations with friction. Prediction for $n_b=0.035$ using the training sets $C_k, \, k=1,2,3,4$. Discharge $q$ (left) and zoom in (right) at $T_f=1$ s.} 
\label{swparametrofric_nb0c035_q}
  \end{center}
 \end{figure}
   
 \begin{figure}[h]
 \begin{center}
   \subfloat[][Water depth]{\includegraphics[width=0.45\textwidth]{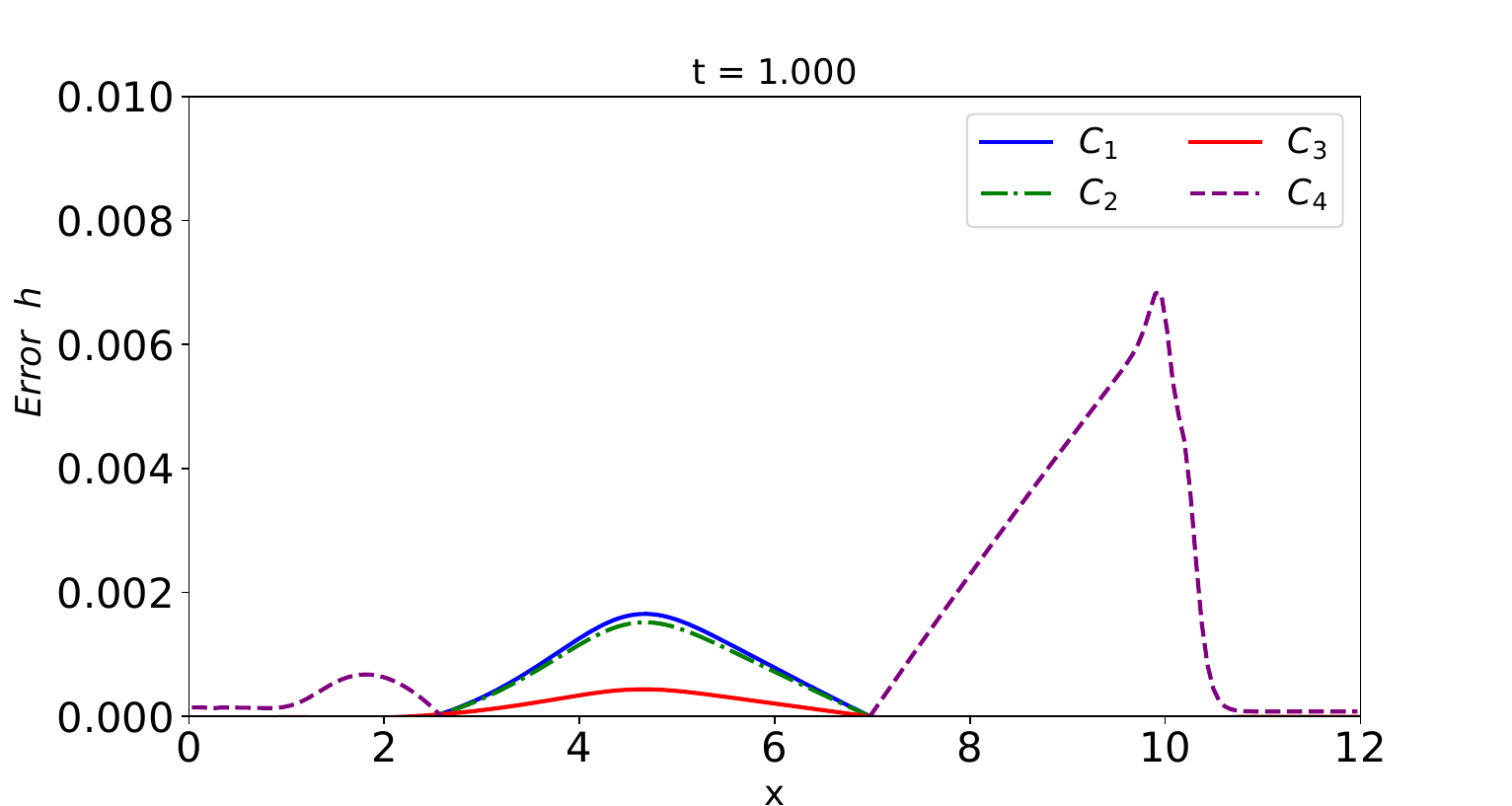}} \vspace{2mm}
   \subfloat[][Discharge]{\includegraphics[width=0.45\textwidth]{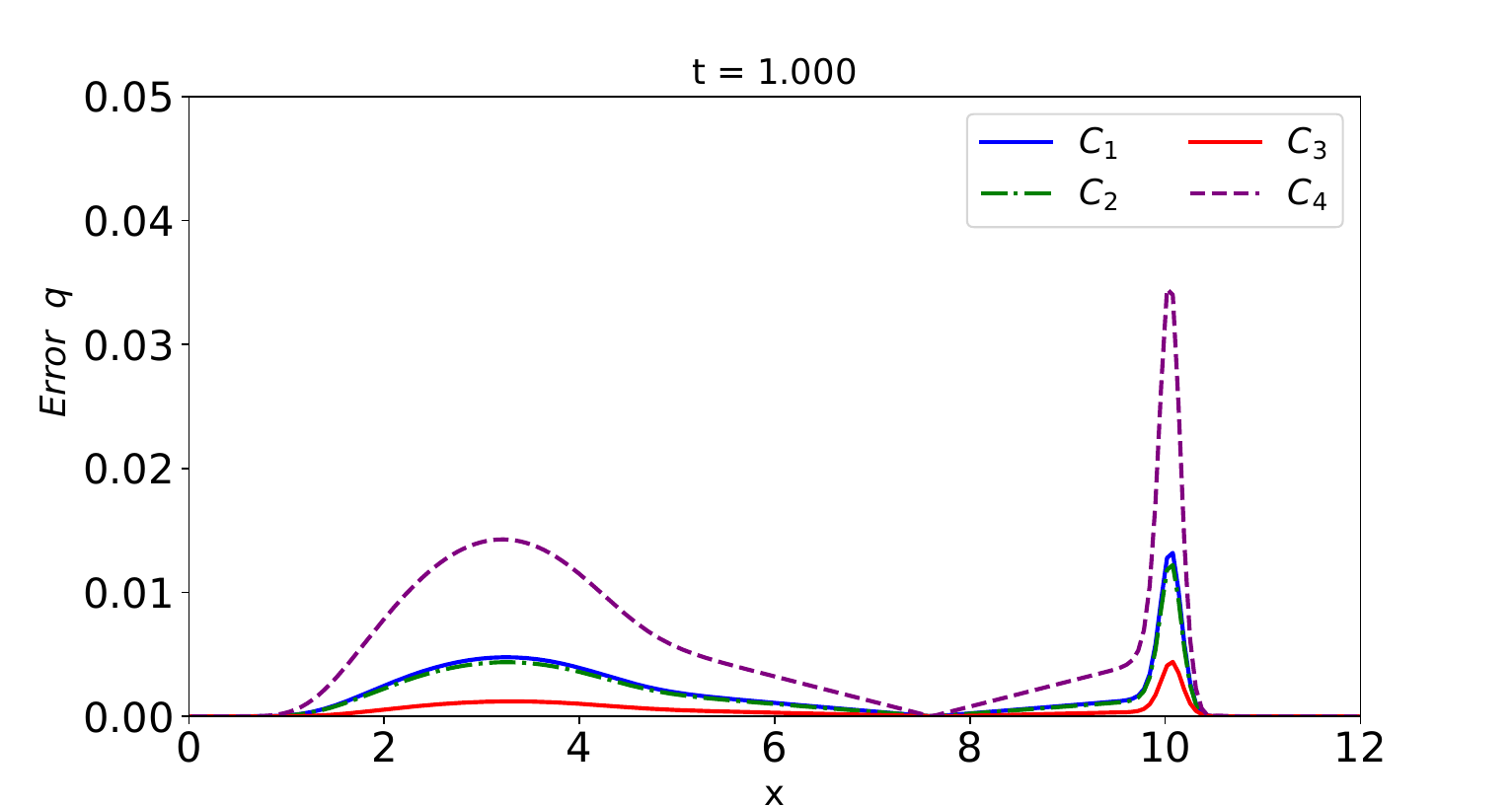}}
     \caption{Test \ref{test_4}. Predictive ROMs for the SW equations with friction. Prediction for $n_b=0.035$ using the training sets $C_k, \, k=1,2,3,4$. Differences in absolute value between the FOM computed with  $n_b=0.035$ and the predictive ROMs. Water depth $h$ (left) and discharge $q$ (right) at $T_f=1$ s.} 
\label{swparametrofric_nb0c035_dif}
  \end{center}
 \end{figure}

  \begin{table}[h]
     \centering
     \begin{tabular}{|c|c|c|c|c|}
     \hline
     & $C_1$ & $C_2$ & $C_3$ & $C_4$ \\ \hline
         $h$ & 8.56e-03 & 7.87e-03 & 2.33e-03 & 2.49e-02 \\
          $q$ & 2.09e-02 &  1.92e-03 & 5.63e-03& 6.17e-02\\ \hline
     \end{tabular}
     \caption{Test \ref{test_4}. Predictive ROMs for the SW equations with friction. Prediction for $n_b=0.035$ using the training sets $C_k, \, k=1,2,3,4$. Errors in $L^1-$norm between the numerical solution computed with the FOM with the parameter $n_b=0.035$ and the solution of the predictive ROMs at $T_f=1$ s.}
     \label{swparametrofric_nb0c035_tabla_Ck}
 \end{table}

 \section{Conclusions} \label{sec:conclusiones}
This work addresses the resolution of hyperbolic systems of balance laws by implementing POD-ROMs. The use of these methods has proven to be computationally efficient, offering significantly less expensive solutions compared to traditional FOMs based on FV schemes. However, since these systems of partial differential equations often include nonlinear terms, dealing with them using ROMs poses a challenge. Appropriate techniques are required to handle these nonlinearities while maintaining accuracy.

Previous works, such as \cite{solan2023development}, have applied the PID approach in combination with time averaging techniques to address the nonlinearities when deriving ROMs. In this work, we introduce an alternative approach based on DEIM to overcome this challenge, once again in combination with the PID approach. The influence of the number of modes and time windows on the results in terms of error and computational time has been investigated through several numerical experiments involving the transport equations and Burgers' equation, both with source terms. Additionally, we compared the time-averaging strategy with the DEIM-based linearization strategy for the SW system with Manning friction. The results demonstrate that the novel DEIM-based technique presented in this article is more accurate and reduces spurious oscillations near discontinuities.

A key finding of this study is the demonstration of an important result concerning the well-balanced property of ROMs. A theorem is introduced, showing that if a ROM is derived from a exactly well-balanced FOM, the ROM inherits this property. To the best of our knowledge, it is the first time that a result on the well-balancedness of the reduced order models has been proved. The key demonstration is that if the original scheme is well-balanced, and the initial conditions are cell-averages of a stationary solution, the snapshot matrix contains only one nonzero eigenvector, meaning the ROM is fully represented by a single POD mode. Numerically, the well-balanced property has been confirmed in various hyperbolic problems, consistently yielding machine precision errors in the preservation of steady-state solutions.

Finally, given the computational advantages of ROMs and their application to hyperbolic systems dependent on certain physical parameters, this work presents a methodology for constructing predictive ROMs. These models are designed using a training set that incorporates different values of the parameter, aiming to deliver fast and accurate results for new parameter values not included in the training set. A sensitivity analysis of this methodology was conducted for the shallow water system, considering the Manning’s friction as the parameter to be estimated, and the results obtained are highly promising.

As possible future work, it would be interesting to apply these approach to other geophysical fluids and also
extend it to the two-dimensional case.

\section*{Acknowledgments}
This work is partially supported by the Grant QUALIFICA (programa: ayudas a acciones complementarias de I+D+i) by Junta de Andalucía grant number QUAL21 005 USE.  This research has also been  partially supported by Junta de Andalucía research project ProyExcel\_00525,  by the European Union - NextGenerationEU program and by grants PID2022-137637NB-C21 and PID2022-137637NB-C22 funded by \\ MCIN/AEI/10.13039/501100011033 and ``ERDF A way of making Europe'' and by the Spanish Government Project PID2021-123153OB-C21, funded by MCIN/AEI/10.13039/501100011033, UE..

\FloatBarrier

\end{document}